\newcommand{\bs}{\boldsymbol}
\newcommand{\bff}{\mathbf}
\newcommand{\trace}{\mathrm{tr}\,}
\newcommand{\R}{{\mathbb R}}
\newcommand{\C}{{\mathbb C}}
\newcommand{\Vh}{{\mathbb V_h}}
\newcommand{\Wh}{{\mathbf W_h}}
\newcommand{\Mh}{{\mathbf M_h}}
\newcommand{\Mhh}[1]{{\mathbf M_h^{#1}}}
\newcommand{\PM}{{\mathbf P_M}}
\newcommand{\PV}{{\Pi_V}}
\newcommand{\PW}{{\Pi_W}}
\newcommand{\Th}{{\mathcal T_h}}
\newcommand{\av}[1]{[ #1 ]_h}
\newcommand{\esig}{{e_h^\sigma}}
\newcommand{\eu}{{\mathbf e_h^u}}
\newcommand{\euhat}{{\widehat{\mathbf e}_h^u}}
\newcommand{\epsig}{{\varepsilon_h^\sigma}}
\newcommand{\epu}{{\bs\varepsilon_h^u}}
\newtheorem{proposition}{Proposition}[section]
\newtheorem{lemma}[proposition]{Lemma}
\newtheorem{theorem}[proposition]{Theorem}
\numberwithin{equation}{section}
\title{HDG methods for elastodynamics}
\date{\today}
\author{Allan Hungria${}^{(1)}$, Daniele Prada${}^{(2)}$,
 \& Francisco--Javier Sayas${}^{(1)}$\footnote{Part of this work was developed while AH and FJS partially funded by NSF grant DMS 1216356.}  \\
 \small
(1) Dept of Mathematical Sciences, University of Delaware, USA\\
\small
(2) Dept of Mathematical Sciences, Indiana University-Purdue University Indianapolis, USA \\
{\tt allanh@udel.edu, dprada@umail.iu.edu, fjsayas@udel.edu }}
\begin{document}

\maketitle

\begin{center}
Dedicated to Peter Monk on his 60th birthday
\end{center}

\begin{abstract}
We derive and analyze a hybridizable discontinuous Galerkin (HDG) method for approximating weak solutions to the equations of time-harmonic linear elasticity on a bounded Lipschitz domain in three dimensions.  The real symmetry of the stress tensor is strongly enforced and its coefficients as well as those of the displacement vector field are approximated simultaneously at optimal convergence with respect to the choice of approximating spaces, wavenumber, and mesh size.  Sufficient conditions are given so that the system is indeed transferable onto a global hybrid variable that, for larger polynomial degrees, may be approximated via a smaller-dimensional space than the original variables.  We construct several variants of this method and discuss their advantages and disadvantages, and give a systematic approach to the error analysis for these methods.   We touch briefly on the application of this error analysis to the time-dependent problem, and finally, we examine two different implementations of the method over various polynomial degrees and numerically demonstrate the convergence properties proven herein.   \\
{\bf AMS Subject classification.} 65N30, 65M50 \\
{\bf Keywords.} Hybridizable DG methods, elastic wave equation, time-harmonic solutions, optimal convergence.
\end{abstract}

\section{Introduction}

We are concerned with numerical methods for the evolution of elastic waves on general (heterogeneous anisotropic) linearly elastic solids. It is well known that elastodynamics, in the time and frequency domains, has multiple applications in the fields of geophysics, material science, structural engineering, oil exploration, aerospace, etc.  This work is a first contribution on the use of the Hybridizable Discontinuous Galerkin (HDG) method to the three-dimensional linear elastic wave equation in the time-harmonic regime, including some insights on energy conservation properties when these ideas are applied to evolutionary cases.

Mathematical literature contains a plethora of numerical methods for dealing with the elastic wave equation, each with its own virtue and applications: spectral elements \cite{Cohen:2001}, particle-based methods such as the Hamiltonian Particle method (HPM) \cite{GoMaMiTa:2012}, as well as the more finite element styled Continuous Galerkin (CG) methods \cite{Joly10:2008} and Discontinuous Galerkin (DG) methods \cite{CoNgPe:2011}.  CG is well-known for its accuracy and reliability with smooth data and simple meshes.  The DG framework is praised for its capacity to handle all sorts of complicated meshes and discontinuous data, to provide high-order accurate solutions, to perform $h/p$ adaptivity, and to retain very good scalability properties. However, DG methods have been criticized because, for the same mesh and the same polynomial degree, the number of globally coupled degrees of freedom is much larger than those of CG methods.

Certain DG methods, however, including the ones we shall explore here, have the key property of being \textit{hybridizable}, i.e., the global system can be recast in terms of (statically condensed onto) a single ``hybrid" variable that represents the trace of the solution on the boundaries of the elements.  These form a family of methods that are, naturally, called the Hybridizable Discontinuous Galerkin (HDG) methods \cite{CoGoLa:2009}.  The main idea for devising these methods is to: (i) use a characterization of the exact solution in terms of solutions of local problems and transmission conditions; (ii) use discontinuous approximations for both the solution inside each element and its trace on the element boundary; (iii) define the local solvers with the DG method; (iv) define a global problem by weakly imposing the transmission conditions.  This creates a global linear system for only the hybrid variable, which is solved, after which the unknowns are recovered locally, again in parallel.  This is similar to the hybridized implementation of mixed methods such as the Raviart-Thomas elements (see \cite{CoGoLa:2009}, \cite{MoScSi:2010} and \cite{Sayas:2013} for more on this), except that the HDG method uses different polynomial spaces and a stabilization function instead of a stable mixed finite element pair.  In certain cases, the hybrid space is smaller than that of the displacement/stress spaces \cite{KiMoShYa2015}, and this has resulted in renewed interest in HDG.

The HDG methodology was successfully applied to time-harmonic acoustic waves by Roland Griesmaier and Peter Monk \cite{GrMo:2011}.  Their analysis involves first rephrasing the classical system as first order in frequency before moving to the weak formulation.  Testing the equations with the projected errors leads to a G{\aa}rding-type identity, and, combined with the dual equations to the classical system, the projected errors of both amplitude and its gradient can be bounded.  This last bound requires a rather involved bootstrapping argument which is indispensable within our argument here.   

Hybridizable DG methods have lately enjoyed further exposure in time-domain wave problems.  For example, Cockburn and Quenneville-B{\'e}lair's work on HDG for the acoustic wave equation \cite{CoQu:2014} provides much of the framework for the insights on the time-domain elastic problem at the end of this work.  Nguyen, Peraire, and Cockburn implement an implicit HDG numerical scheme for both time-dependent acoustic and elastic equations \cite{CoNgPe:2011}, and more recently, Stanglmeier, Nguyen, Peraire, and Cockburn explore an explicit scheme for the acoustic case \cite{CoNgPeSt:2016}.

The vector field formulation of elasticity introduces several distinct complications in both the analysis and the implementation of HDG.  Cockburn, Soon, and Stolarski give a numerical implementation of HDG for planar elasticity, along with a proof of existence and uniqueness of a solution to their particular HDG formulation \cite{CoSoSt:2009}.  Cockburn, Fu, and Stolarski go on to analyze the convergence of this last method, which uses degree $k$ polynomial bases for displacement, stress, and hybrid spaces.  They prove convergence at an order of $k+1$ for displacement and $k+1/2$ for the stress \cite{CoFuSt:2014}, which is suboptimal; this has prompted the exploration of optimally convergent HDG methods.

One issue is that the tailored HDG projections often used in the analysis may not play well with the symmetry of the approximate stress tensor.  Another is that using bases of the same polynomial degree for displacement, stress, and hybrid spaces leads to a suboptimal method.  One method for addressing both of these issues is to introduce special divergence-free symmetric ``bubble matrices" as in \cite{CoGoGu:2010}, providing an extra control on the stress-associated approximation space.  This yields a weakly-enforced symmetry of the approximate stress as well as optimal convergence of a postprocessed solution, taking advantage of some superconvergent quantities.

Another approach entirely is that of Weifeng Qiu, Jiguang Shen, and Ke Shi for the steady-state elasticity problem \cite{QiSh:2014}.  The special tailored HDG projections are left behind for simpler $L^2$ projections, and the displacement-associated approximation space is expanded by one polynomial degree.  While this does then require some extra terms to be bounded in the analysis, the net result is shown to achieve optimal convergence directly. (Note that due to the disparity of polynomial degrees for the stress and displacement, optimal convergence of this method yields the same quality of the solution as a postprocessed method based on a superconvergent scheme.)  An important feature of this approach is the strong symmetry of the approximate stress.  See the introduction of \cite{QiSh:2014} for more on this.  Expanding the polynomial degree of the primal unknown by one is an idea that can be traced back to Lehrenfeld and Schoberl \cite{LeSc:2015}, but Qiu, Shen and Shi compensate by adjusting the order of the primal unknown piece of the stabilization function to $O(h^{-1})$ as well as a projection operator from primal approximation space onto hybrid space.

Our choice of polynomial approximating spaces and projections is that of Qiu, Shen, and Shi in order to be able to work on the most general polyhedral mesh possible.  However, the frequency-domain problem, unlike the steady-state problem, is not coercive, so we wind up with a G\aa rding-type identity similar to that of Griesmaier and Monk's \cite{GrMo:2011}, after following their example and first phrasing the classical system as first order in both frequency and space.  The two analytical recipes from \cite{QiSh:2014} and \cite{GrMo:2011} are here carefully blended to approach the time-harmonic elasticity case, which has implications on the choice of numerical flux and its dependence on the wavenumber.  

The following treatment of HDG for time-harmonic elasticity, however, comes with its own complications, not only with regard to the hybridization of the DG scheme, but also in consideration of the dependence on the wavenumber.  We have also developed a simplified system for dealing with the double-bootstrapping process, which is now even messier due to the use of $L^2$ projections rather than tailored HDG projections.  By varying the numerical flux, we wind up with several different HDG methods for the time-harmonic linear elastic problem.  We proceed to show how some of these methods can be used to produce semi-discretizations in the time domain and that one of them is actually conservative.

What follows is a rigorous treatment of the error analysis and well-posedness of HDG methodology as applied to the problem of three-dimensional time-harmonic elasticity on a polyhedron with mixed boundary conditions and strong symmetric stresses.  We explore how this analysis can shape the stability mechanism for a method of numerically integrating the time-dependent system, in particular for the second-order-in-frequency case.  Numerical experiments are carried out to demonstrate convergence of both the first-order method and a second-order variant.  We then compare, using various polynomial degrees and tetrahedrizations, the sizes of the global linear systems involved in HDG and Lagrange element CG, demonstrating an advantage of HDG at large polynomial degrees.

\paragraph{Notation.} Given an open set $\Omega$, we will write
\[
(u,v)_\Omega:=\int_\Omega u\, v,
	\quad
(\mathbf u,\mathbf v)_\Omega:=\int_\Omega \mathbf u\cdot\mathbf v,
	\quad
(\bs\xi,\bs\chi)_\Omega:=\int_\Omega \bs\xi:\bs\chi:=\int_\Omega \mathrm{trace}(\bs\chi^\top \bs\xi),
\]
for real square-integrable scalar, vector-valued, and matrix-valued functions. The symbol $\top$ will be used for real transposition of matrices. When used for complex-valued fields, all brackets will still be defined in the same way, and will be therefore bilinear and not sesquilinear. In the same spirit, $\top$ will denote transposition and the colon will be defined as above, even when applied to complex matrices. 
The set of symmetric real $3\times 3$ matrices will be denoted $\R^{3\times 3}_{\mathrm{sym}}$ and the set of symmetric (not Hermitian) complex $3\times 3$ matrices will be denoted $\C^{3\times 3}_{\mathrm{sym}}$.  Similarly, the set of antisymmetric real $3\times 3$ matrices will be denoted $\R^{3\times 3}_{\mathrm{skw}}$ and the set of antisymmetric complex $3\times 3$ matrices will be denoted $\C^{3\times 3}_{\mathrm{skw}}$.

\section{Problem setting and HDG discretization}\label{sec:2}

Let $\Omega\subset \mathbb R^3$ be a polyhedron with Lipschitz boundary $\Gamma$. We assume that $\Gamma$ is divided into two non-overlapping parts $\Gamma_D$ and $\Gamma_N$, where we will respectively impose Dirichlet and Neumann boundary conditions. For simplicity we will assume that each of $\Gamma_D$ and $\Gamma_N$ are made up of full faces of $\Gamma$. We will be looking for a displacement field $\mathbf u:\Omega\to \C^3$ and for the associated stress tensor $\widetilde{\bs\sigma}:\Omega\to \C^{3\times 3}_{\mathrm{sym}}$. The stress field is given by a general linear non-homogeneous anisotropic law:
\[
\widetilde{\bs\sigma}=\mathcal C\bs\varepsilon(\mathbf u),
\qquad\bs\varepsilon(\mathbf u):=\tfrac12 (\nabla\mathbf u+(\nabla \mathbf u)^\top),
\]
where for almost every $\mathbf x\in \Omega$, the linear operator $\mathcal C(\mathbf x)$ transforms real symmetric matrices into real symmetric matrices, satisfies the symmetry condition
\[
(\mathcal C(\mathbf x)\bs\xi):\bs\chi 
=\bs\xi : (\mathcal C(\mathbf x)\bs\chi) \quad \forall \bs\xi,\bs\chi\in \mathbb R^{3\times 3}_{\mathrm{sym}}
\]
and the positivity condition
\[
(\mathcal C(\mathbf x) \bs\xi):\bs\xi
\ge C_0\,  \bs\xi:\bs\xi \qquad \forall \bs\xi\in \R^{3\times 3}_{\mathrm{sym}},
\]
for some $C_0>0$.
Moreover, we assume that the components of the tensor $\mathcal C$ with respect to the canonical basis of $\mathbb R^{3\times 3}_{\mathrm{sym}}$ are $L^\infty(\Omega)$ functions. The other physical parameter in the equations to follow is the strictly positive bounded density $\rho:\Omega \to \R$, so that the weighted norm is equivalent 
\[
C_1 \|\mathbf u\|_\Omega^2 \le \|\mathbf u\|_\rho^2:=(\rho\mathbf u,\overline{\mathbf u})_\Omega
\le C_2 \|\mathbf u\|_\Omega^2.
\]
In strong primal form, our problem is the search of $\mathbf u$ such that
\begin{subequations}\label{eq:2.1}
\begin{alignat}{6}
\nabla\cdot\widetilde{\bs\sigma} + \kappa^2\rho\,\mathbf u
	 &= \widetilde{\mathbf f} &\qquad &\mbox{in $\Omega$},\\
\mathbf u &=\mathbf g_D & & \mbox{on $\Gamma_D$},\\
\widetilde{\bs\sigma}\,\mathbf n &=\widetilde{\mathbf g}_N & & \mbox{on $\Gamma_N$},
\end{alignat}
\end{subequations}
where: $\widetilde{\bs\sigma}=\mathcal C\bs\varepsilon(\mathbf u)$, the divergence operator is applied to the rows of $\widetilde{\bs\sigma}$, $\widetilde{\mathbf f}\in \mathbf L^2(\Omega)=L^2(\Omega)^3$, $\mathbf g_D\in \mathbf H^{1/2}(\Gamma_D)=H^{1/2}(\Gamma_D)^3$, $\widetilde{\mathbf g}_N \in \mathbf L^2(\Gamma_N)=L^2(\Gamma_N)^3$, $\mathbf n$ is the unit outward-pointing normal vector field on $\Gamma_N$,  and $\kappa > 0$ is the wave number. In the weak primal formulation (where problem \eqref{eq:2.1} is typically studied), the Dirichlet condition is understood in the sense of traces, while the Neumann condition holds in a dual space with negative Sobolev index on the boundary $\Gamma_N$. We will assume that $\kappa^2$ is not an eigenvalue for the associated Navier-Lam\'e operator $\mathbf u\mapsto -\nabla\cdot (\mathcal C\bs\varepsilon(\mathbf u))$ with the given boundary conditions, i.e., we assume that the only solution of \eqref{eq:2.1} with zero right-hand side is the trivial solution. 

The discretization will be done for a first order (in space and frequency) reformulation of \eqref{eq:2.1}. We first need to invert the elastic law $\mathcal C$. With the hypothesis given for $\mathcal C$, we can assert that for almost every $\mathbf x\in \Omega$, there exists a linear operator $\mathcal A(\mathbf x)=\mathcal C(\mathbf x)^{-1}$, transforming real symmetric matrices into real symmetric matrices. On the set of matrix-valued $\bs\xi:\Omega \to \mathbb C^{3\times 3}_{\mathrm{sym}}$ functions with $L^2(\Omega)$ components, we define the elastic potential norm
\[
C_1\| \bs\xi\|_\Omega^2 \le \| \bs\xi\|_{\mathcal A}^2 :=
(\mathcal A \bs\xi,\overline{\bs\xi})_{\Omega}=(\mathcal A\bs\xi_{\mathrm{re}}+\imath \mathcal A\bs\xi_{\mathrm{im}},\overline{\bs\xi})_\Omega
\le C_2 \| \bs\xi\|_\Omega^2.
\]
We emphasize that we will always use bilinear (not sesquilinear) brackets for $L^2$-type products, and that the symbol used for the Frobenius product of matrices (the colon) will not include conjugation. We then introduce the new unknown and data
\[
\bs\sigma:=\frac\imath\kappa \widetilde{\bs\sigma}=\frac\imath\kappa \mathcal C \bs\varepsilon(\mathbf u),
	\qquad
\mathbf f:=\frac\imath\kappa \widetilde{\mathbf f},
	\qquad
\mathbf g_N:=\frac\imath\kappa \widetilde{\mathbf g}_N,
\]
and write \eqref{eq:2.1} as the equivalent first order system
\begin{subequations}\label{eq:2.2}
\begin{alignat}{6}
\imath\kappa\mathcal A\bs\sigma +\bs\varepsilon(\mathbf u) &=\mathbf 0
	&\qquad &\mbox{in $\Omega$},\\
\nabla\cdot\bs\sigma +\imath\kappa\,\rho\,\mathbf u &=\mathbf f
	&\qquad &\mbox{in $\Omega$},\\
\mathbf u &=\mathbf g_D
	&\qquad &\mbox{on $\Gamma_D$},\\
\bs\sigma\mathbf n &=\mathbf g_N
	&\qquad &\mbox{on $\Gamma_N$}.
\end{alignat}
\end{subequations}
A similar formulation can be found using the original data and the stress tensor $\widetilde{\bf\sigma}$, so that the equations are $\mathcal A\widetilde{\bs\sigma}-\bs\varepsilon(\mathbf u)=\mathbf 0$ and $\nabla\cdot\widetilde{\bs\sigma}+\kappa^2 \,\rho\,\mathbf u=\widetilde{\mathbf f}$. (This will be discussed in Section \ref{sec:6}.)

We next introduce the HDG discretization of \eqref{eq:2.2}. Since the method we use is Qiu \& Shi's \cite{QiSh:2014}, we will not repeat the derivation. We start with a shape-regular conforming tetrahedrization $\Th$ of the domain $\Omega$. The set of all faces of elements of $\Th$ is denoted $\mathcal E_h$, and we will sometimes understand that $\mathcal E_h$ is the geometric skeleton of the triangulation, i.e., the union of all the faces of all elements. The method involves three discrete spaces
\begin{subequations}\label{eq:2.3}
\begin{eqnarray}
\Vh &:=& \{ \bs\xi:\Omega\to \C^{3\times 3}_{\mathrm{sym}}\,:\, \bs\xi|_K \in \mathcal P_k(K;\C^{3\times 3}_{\mathrm{sym}}) \quad \forall K\in \Th\},\\
\Wh &:=& \{ \mathbf u:\Omega\to \C^3\,:\, \mathbf u|_K \in \mathcal P_{k+1}(K;\C^3)\quad
\forall K\in \Th\},\\
\Mh &:=& \{ \bs\mu:\mathcal E_h \to \C^3\,:\, \bs\mu|_F\in \mathcal P_k(F;\C^3)\quad
\forall K\in \mathcal E_h\}.
\end{eqnarray}
\end{subequations}
In \eqref{eq:2.3}, $\mathcal P_r(K;S)$ is the set of polynomials of total degree up to $r$ defined on $K$ and with values in $S\in \{\C^{3\times 3}_{\mathrm{sym}},\C^3\}$, while $\mathcal P_k(F;\C^3)$ are vector valued polynomials on the tangential coordinates  defined on the face $F$ and of degree not greater than $k$. We will also use the orthogonal projector
\begin{equation}\label{eq:2.4new}
\PM : \prod_{K\in \Th} L^2(\partial K) \longrightarrow \prod_{K\in \Th} \prod_{F\in \mathcal E(K)} \mathcal P_k(F;\C^3),
\end{equation}
where $\mathcal E(K)$ is the set of faces of $\partial K$. Note that $\Mh$ can be identified with the subspace of the set of the right-hand side of \eqref{eq:2.4new} consisting of functions that are single-valued on internal faces.

Stabilization is carried out through a function $\bs\tau$ defined as follows: for each element $K\in \Th$, a function $\bs\tau_K:\partial K \to \mathbb R^{3\times 3}_{\mathrm{sym}}$ satisfying (a) $\bs\tau_K|_{F}$ is constant on each $F\in \mathcal E(K)$; (b) there exist two positive constants such that
\begin{equation}\label{eq:tau}
C_1 h_K^{-1} \|\bs\mu\|_{\partial K}^2 \le \langle\bs\tau_K \bs\mu,\overline{\bs\mu}\rangle_{\partial K}
	\le C_2 h_K^{-1} \|\bs\mu\|_{\partial K}^2
	\quad \forall \bs\mu\in \mathbf L^2(\partial K), \quad \forall K\in \Th,
\end{equation}
where $h_K$ is the diameter of $K$. The symbol $\bs\tau$ will be used to denote the function defined on the set of boundaries of all elements as above, understanding that $\bs\tau$ can be double-valued on interior faces. The numerical fluxes follow the pattern of HDG methods: the one corresponding to the displacement will be an unknown $\widehat{\mathbf u}_h \in \Mh$, while the one related to the (normal) stress is given elementwise with a formula in terms of all the unknowns 
\begin{equation}\label{eq:2.4}
\widehat{\bs\sigma}_h\mathbf n:=\bs\sigma_h\mathbf n+\bs\tau_K (\PM \mathbf u_h-\widehat{\mathbf u}_h) 
\,:\, \partial K\to \C^3.
\end{equation}
Here the normal vector field $\mathbf n:\partial K\to \R^3$ is unitary and points to the exterior of $K$. At this time, we can write the HDG discrete equations for \eqref{eq:2.2}. We look for $(\bs\sigma_h,\mathbf u_h,\widehat{\mathbf u}_h)\in \Vh\times \Wh\times \Mh$ satisfying
\begin{subequations}\label{eq:2.5}
\begin{alignat}{6}
\label{eq:2.5a}
\imath\kappa (\mathcal A\bs\sigma_h,\bs\xi)_{\Th}
	-(\mathbf u_h,\nabla\cdot\bs\xi)_\Th+\langle\widehat{\mathbf u}_h,\bs\xi\mathbf n\rangle_{\partial\Th}
	 & = 0 & \qquad & \forall \bs\xi\in \Vh,\\
\label{eq:2.5b}	
-(\bs\sigma_h,\nabla\mathbf w)_{\Th}
	+\langle \widehat{\bs\sigma}_h\mathbf n,\mathbf w\rangle_{\partial\Th}
	+\imath\kappa (\rho\,\mathbf u_h,\mathbf w)_\Th &=(\mathbf f,\mathbf w)_\Th
		& & \forall \mathbf w\in \Wh,\\
\label{eq:2.5c}
\langle\widehat{\bs\sigma}_h\mathbf n,\bs\mu\rangle_{\partial\Th\setminus\Gamma_D}
	&=\langle\mathbf g_N,\bs\mu\rangle_{\Gamma_N} & & \forall \bs\mu \in \Mh,\\
\label{eq:2.5d}
\langle\widehat{\mathbf u}_h,\bs\mu\rangle_{\Gamma_D} &=\langle\mathbf g_D,\bs\mu\rangle_{\Gamma_D}
	& &\forall \bs\mu\in \Mh,
\end{alignat}
\end{subequations}
with \eqref{eq:2.4} as the definition of $\widehat{\bs\sigma}_h\mathbf n$ and brackets defined as follows:
\[
(\mathbf u,\mathbf v)_\Th:=\sum_{K\in \Th} (\mathbf u,\mathbf v)_K, \qquad
\langle \mathbf u,\mathbf v\rangle_{\partial\Th}:=\sum_{K\in \Th} \langle \mathbf u,\mathbf v\rangle_{\partial K}
	:=\sum_{K\in \Th}\int_{\partial K}\mathbf u\cdot\mathbf v,
\]
and
\[
\langle \mathbf u,\mathbf v\rangle_{\partial\Th\setminus\Gamma_D}
:=\sum_{K\in \Th} \langle \mathbf u,\mathbf v\rangle_{\partial K\setminus\Gamma_D}.
\]
Equations \eqref{eq:2.5c} and \eqref{eq:2.5d} can be added together as a single equation tested against $\Mh$, which shows that \eqref{eq:2.5} is a square system of linear equations. The discrete momentum equation \eqref{eq:2.5b} can be equivalently written as
\begin{equation}\label{eq:2.5bnew}
(\nabla\cdot\bs\sigma_h,\mathbf w)_\Th
	+\langle\bs\tau (\PM\mathbf u_h-\widehat{\mathbf u}_h),\PM \mathbf w\rangle_{\partial\Th}
	+\imath\kappa (\rho\,\mathbf u_h,\mathbf w)_\Th
	=(\mathbf f,\mathbf w)_\Th\quad \forall\mathbf w\in \Wh.
\end{equation}
We note that the degree of the polynomial space used for $\mathbf u_h$ is one higher than the one used for the other unknowns and the fact that $\PM$ has been introduced in the definition of the flux \eqref{eq:2.4} so that $\widehat{\bs\sigma}_h\mathbf n \in 
\prod_{F\in \mathcal E(K)} \mathcal P_k(F;\C^3).$

\section{Main results}\label{sec:3}

\paragraph{Regularity assumptions.}
From now on we will assume that $\rho$ and the coefficients of $\mathcal C$ are in $W^{1,\infty}(\Th)$. 
Let us now consider the coercive problem
\begin{subequations}\label{eq:3.1}
\begin{alignat}{6}
\nabla\cdot(\mathcal C \bs\varepsilon(\mathbf w))-\rho\,\mathbf w
	 &= \mathbf r &\qquad &\mbox{in $\Omega$},\\
\mathbf w &=\bs 0 & & \mbox{on $\Gamma_D$},\\
(\mathcal C \bs\varepsilon(\mathbf w))\,\mathbf n 
&=\bs 0 & & \mbox{on $\Gamma_N$}.
\end{alignat}
\end{subequations}
We will also assume that the solution of \eqref{eq:3.1} for arbitrary $\mathbf r\in L^2(\Omega;\mathbb R^3)$ is in $H^2(\Omega;\mathbb R^3)$ and that there exists a constant $C>0$ such that
\begin{equation}\label{eq:3.2}
\| \mathbf w\|_{2,\Omega} \le C \|\mathbf r\|_\Omega.
\end{equation}
For the time-harmonic problem, we will denote by $C_\kappa>0$ the constant such that the solution of
\begin{subequations}\label{eq:3.3}
\begin{alignat}{6}
\nabla\cdot(\mathcal C \bs\varepsilon(\mathbf w))+\kappa^2\rho\,\mathbf w
	 &= \mathbf r &\qquad &\mbox{in $\Omega$},\\
\mathbf w &=\bs 0 & & \mbox{on $\Gamma_D$},\\
(\mathcal C \bs\varepsilon(\mathbf w))\,\mathbf n 
&=\bs 0 & & \mbox{on $\Gamma_N$}.
\end{alignat}
\end{subequations}
can be bounded by
\begin{equation}\label{eq:3.4}
\| \mathbf w\|_{1,\Omega}\le C_\kappa \|\mathbf r\|_\Omega.
\end{equation}
Note that we have assumed the unique solvability of \eqref{eq:3.3}.

\paragraph{Error quantities.}
The error analysis will be carried out by comparing numerical solutions and orthogonal projections. Let $\PV: L^2(\Omega;\C^{3\times 3}_{\mathrm{sym}})\to \Vh$ and $\PW:L^2(\Omega;\C^3)\to\Wh$ be the orthogonal projections onto the discrete spaces. Consider the errors
\[
\esig:=\PV \bs\sigma-\bs\sigma_h,
	\qquad
\eu:=\PW \mathbf u-\mathbf u_h,
	\qquad
\euhat:=\PM \mathbf u-\widehat{\mathbf u}_h,
\]
and the best approximation errors
\[
\epsig:=\PV\bs\sigma-\bs\sigma,
	\qquad
\epu:=\PW \mathbf u-\mathbf u.
\]
For convenience, we introduce the skeleton norm
\[
\| \bs\mu\|_\tau:=\langle \bs\tau \bs\mu,\overline{\bs\mu}\rangle_{\partial\Th}^{1/2}.
\]
For the error analysis we will allow constants to depend on the density $\rho$ and on the coefficients of $\mathcal A$. While the influence of these physical coefficients in the inequalities can be tracked with careful arguments, the results seem to be too involved to obtain precise conclusions on how $h$ and $\kappa$ interact with them.  However, we will pay attention to the maximum spectral value of the inverse compliance tensor, i.e., to the positive bounded function such that for almost every $\mathbf x\in \Omega$
\begin{equation}\label{eq:5.3new}
(\mathcal A(\mathbf x)\bs\xi):\bs\xi\le c_{\mathcal A}(\mathbf x) \, 
	 \bs\xi :\bs\xi  \qquad \forall \bs\xi \in \R^{3\times 3}_{\mathrm{sym}}.
\end{equation}

\begin{theorem}\label{the:3.1}
There exist $C_1, C_2>0$, dependent only on the shape-regularity of $\Th$, the density $\rho$ and the coefficients of the inverse compliance tensor $\mathcal A$ such that
if $h (1+\kappa)^{3/2}(1+\kappa C_\kappa+C_\kappa)$ is small enough, then the errors can be bounded by
\[
\| \esig\|_{\mathcal A}+\kappa^{-1/2} \| \PM \eu-\euhat\|_\tau
	\le C_1 (1+\kappa^{-1/2}) \big( h^t |\bs\sigma|_{t,\Omega}+h^{s-1} |\mathbf u|_{s,\Omega}\big)
\]
and
\[
\| \eu\|_\Omega\le C_2 (1+\kappa C_\kappa) \kappa^{-1/2}(1+\kappa)^2 
\big( h^{t+1} |\bs\sigma|_{t,\Omega}+h^{s} |\mathbf u|_{s,\Omega}\big),
\]
if $k\ge 1$, $\mathbf u\in H^s(\Omega;\C^3)$ with $1\le s\le k+2$, and $\bs\sigma\in H^t(\Omega;\C^{3\times 3})$ with $1\le t\le k+1$.
\end{theorem}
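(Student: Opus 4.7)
The overall strategy combines a G\aa rding-type energy identity with an Aubin--Nitsche duality argument, in the spirit of Griesmaier and Monk \cite{GrMo:2011} but adapted to the $L^2$-projection framework of Qiu and Shi \cite{QiSh:2014}. First I would derive error equations by testing the first-order system \eqref{eq:2.2} against the HDG test spaces, subtracting \eqref{eq:2.5}, and rewriting everything in terms of $(\esig,\eu,\euhat)$. Because $\PV$ and $\PW$ are plain $L^2$-projections, the right-hand side is not the usual HDG consistency residual: it carries boundary contributions proportional to $\PM\PW\mathbf u-\PM\mathbf u$, reflecting that the primal projection is not tailored to the flux, as well as volume contributions of the form $(\mathcal A\epsig,\bs\xi)_\Th$ that do not vanish because $\mathcal A$ is variable. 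All of these must be controlled by the best-approximation errors $\|\epsig\|_\Omega$, $\|\epu\|_\Omega$ and their $\partial\Th$-traces via standard projection and trace--inverse inequalities.

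The energy identity is then obtained by testing the three error equations with $(\overline{\esig},\overline{\eu},\overline{\euhat})$ and summing. After an elementwise integration by parts, the flux choice \eqref{eq:2.4}, and the orthogonality $\langle\bs\tau(\PM\eu-\euhat),\eu-\PM\eu\rangle_{\partial\Th}=0$ (valid because $\bs\tau$ is constant per face and $\eu-\PM\eu$ is $L^2(F)$-orthogonal to $\mathcal P_k(F;\C^3)$), all interelement contributions collapse into
\begin{equation*}
\imath\kappa\,\|\esig\|_{\mathcal A}^2+\imath\kappa\,\|\eu\|_\rho^2+\|\PM\eu-\euhat\|_\tau^2=\mathcal T(\epsig,\epu,\eu).
\end{equation*}
Taking imaginary parts controls $\kappa(\|\esig\|_{\mathcal A}^2+\|\eu\|_\rho^2)$ and taking real parts controls $\|\PM\eu-\euhat\|_\tau^2$. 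Estimating $\mathcal T$ by Cauchy--Schwarz together with trace--inverse inequalities, with carefully chosen weights in $h_K$ and $\kappa$, delivers the first inequality up to a residual proportional to $\|\eu\|_\Omega$ that cannot yet be absorbed.

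This forces a separate estimate of $\|\eu\|_\Omega$ via duality. Following Griesmaier and Monk, I would introduce the adjoint problem \eqref{eq:3.3} with data $\rho\,\overline{\eu}$, apply the HDG scheme formally to its smooth solution using the same $\PV,\PW,\PM$, and test the primal error equations against the projections of the dual solution. The regularity bound \eqref{eq:3.4}, together with the $H^2$-regularity \eqref{eq:3.2} used on the elliptic piece, then yields an estimate of $\|\eu\|_\Omega^2$ in terms of the energy quantities already under control plus lower-order $\|\eu\|_\Omega$-remainders with explicit $h$- and $\kappa$-weights. The second inequality of the theorem then follows by bootstrapping back into the first estimate.

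The principal obstacle is closing this bootstrap loop while tracking the $\kappa$-dependence. The duality step produces terms of the form $h\,\kappa^{\alpha}(1+\kappa C_\kappa)^{\beta}\|\eu\|_\Omega$ that must be absorbed into the left-hand side, which is precisely what demands the smallness hypothesis $h(1+\kappa)^{3/2}(1+\kappa C_\kappa+C_\kappa)\ll 1$. Relative to the tailored-projection analysis in \cite{QiSh:2014}, working with $L^2$-projections enlarges the consistency residual by the extra $\PM\PW\mathbf u-\PM\mathbf u$ boundary contributions and by the variable-coefficient volume term $(\mathcal A\epsig,\cdot)_\Th$, so the most delicate part of the proof is a careful bookkeeping of these extra contributions in both the primal and the dual tests, so that, after collecting powers of $h$ and $\kappa$, the final absorption step still closes and produces the rates $h^t$ and $h^{t+1}$ advertised in the statement.
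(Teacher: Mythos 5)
Your overall architecture (error equations, energy identity, duality for $\|\eu\|_\Omega$, bootstrap under a smallness condition on $h$) matches the paper's, but the energy identity you write down has the wrong sign and this misrepresents the structural core of the argument. Testing the first error equation with $\overline{\esig}$ and the \emph{conjugate} of the second with $\overline{\eu}$ produces
\[
\imath\kappa\bigl(\|\esig\|_{\mathcal A}^2-\|\eu\|_\rho^2\bigr)+\|\PM\eu-\euhat\|_\tau^2=\mathcal T,
\]
not $\imath\kappa(\|\esig\|_{\mathcal A}^2+\|\eu\|_\rho^2)+\cdots$: the conjugation flips $\imath\kappa$ to $-\imath\kappa$ in the momentum equation, which is precisely the G\aa rding-type indefiniteness inherited from the sign of $\kappa^2\rho\mathbf u$. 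Consequently, taking imaginary parts does \emph{not} control $\kappa(\|\esig\|_{\mathcal A}^2+\|\eu\|_\rho^2)$ as you claim; it controls only the difference, and $\|\eu\|_\rho^2$ must be moved to the right-hand side as an extra $+\|\eu\|_\Omega^2$ term (this is the $\mathrm U^2$ in the paper's inequality \eqref{eq:6.11}). This is \emph{why} the duality estimate is indispensable and why the bootstrap must absorb a full $\|\eu\|_\Omega^2$ rather than merely ``lower-order remainders''; with your definite identity the proof would close far too easily and the smallness hypothesis would be largely unnecessary. You do arrive at the right conclusion (duality is needed), but for a reason inconsistent with your own identity, so the step ``taking imaginary/real parts'' as written would fail.

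Two smaller omissions: the bound on the consistency term $\langle\overline{\epsig}\mathbf n,\eu-\euhat\rangle_{\partial\Th}$ produces a factor $\|\bs\varepsilon(\eu)\|_{\Th}$ that is not a best-approximation quantity; the paper controls it by a separate mini-bootstrap, testing the first error equation with $\bs\varepsilon(\overline{\eu})$ to get $\|\bs\varepsilon(\eu)\|_{\Th}\lesssim\kappa(\|\epsig\|_{\mathcal A}+\|\esig\|_{\mathcal A})+\|\PM\eu-\euhat\|_\tau$ (equations \eqref{eq:5.9}--\eqref{eq:5.10}). Your proposal never accounts for this term. You also do not explain where the hypothesis $k\ge1$ enters: it is needed so that traces of rigid motions lie in $\Mh$, which is what makes the estimate \eqref{eq:5.8b} for $\langle\bs\tau\overline{\epu},\PM\eu-\euhat\rangle_{\partial\Th}$ available.
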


Optimal error estimates are
\[
\| \bs\sigma-\bs\sigma_h\|_\Omega=\mathcal O(h^{k+1}), 
	\qquad
\|  \mathbf u-\mathbf u_h\|_\Omega=\mathcal O(h^{k+2}).
\]
With some additional scaling inequalities, keeping in mind that $\bs\tau$ scales like $h^{-1}$ elementwise, it is possible to show that
\[
\| \mathbf u-\widehat{\mathbf u}_h\|_\tau =\mathcal O(h^{k+1}).
\]
The estimates of Theorem \ref{the:3.1} can also be written in terms of the original physical variables. If we denote $\widetilde{\bs\sigma}_h:=-\imath\kappa\bs\sigma_h$, then
\begin{alignat*}{6}
\| \PV \widetilde{\bs\sigma}-\widetilde{\bs\sigma}_h\|_\Omega+
	\kappa^{1/2} \| \PM \eu-\euhat\|_\tau 
	& \le
	C_1 (1+\kappa^{-1/2}) \big( h^t |\widetilde{\bs\sigma}|_{t,\Omega}+h^{s-1}\kappa |\mathbf u|_{s,\Omega}\big),\\
\| \eu\|_\Omega
	&\le C_2 (1+\kappa C_\kappa) \kappa^{-3/2}(1+\kappa)^2 
       \big( h^{t+1} |\widetilde{\bs\sigma}|_{t,\Omega}+h^{s} \kappa|\mathbf u|_{s,\Omega}\big).
\end{alignat*}

\paragraph{Unique solvability.}
Theorem \ref{the:3.1} can be used to prove existence and uniqueness of solution of \eqref{eq:2.5} for $h$ small enough (depending on the wave number $\kappa$).  The argument is as follows. Consider the system \eqref{eq:2.5} with homogeneous data: $\mathbf f=\bs 0$, $\mathbf g_N=\bs 0$, and $\mathbf g_D=\bs 0$. Let $(\bs\sigma_h,\mathbf u_h,\widehat{\mathbf u}_h)$ be any solution of this homogeneous set of linear equations. Theorem \ref{the:3.1} applied to this solution and the exact zero solution shows that $(\bs\sigma_h,\mathbf u_h,\widehat{\mathbf u}_h)$ has to vanish. Therefore, the linear system \eqref{eq:2.5} (with as many equations as unknowns) is uniquely solvable for any right-hand side. The logic of the use of Theorem \ref{the:3.1} is slightly warped: it assumes the existence of a discrete solution, which we know to happen at least for the homogeneous case, and then it uses the error estimates to show that the system is actually uniquely solvable.

\section{Local solvability and energy identity}

\begin{lemma}\label{lemma:5.1}
There exists $C>0$, depending only on the shape regularity of the grid, such that
\[
\|\mathbf v\|_K \le C h_K \|\bs\varepsilon(\mathbf v)\|_K
\]
for all $\mathbf v\in H^1(K;\C^3)$ satisfying
\begin{equation}\label{eq:5.1}
\langle \mathbf v,\bs\mu\rangle_{\partial K}=0 \qquad \forall \bs\mu\in \prod_{F\in \mathcal E(K)} \mathcal P_0(F;\C^3).
\end{equation}
\end{lemma}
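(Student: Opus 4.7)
}
The plan is to combine a standard Korn inequality modulo rigid motions with the observation that, on a tetrahedron, the face-average constraint \eqref{eq:5.1} forces the rigid-motion component to be small. Let $\mathcal R$ denote the six-dimensional space of (infinitesimal) rigid motions $\mathbf r(\mathbf x)=\mathbf a+W(\mathbf x-\mathbf x_K)$ with $\mathbf a\in\C^3$ and $W\in\C^{3\times 3}_{\mathrm{skw}}$. First I would invoke Korn's second inequality on $K$, which after the usual scaling argument based on shape regularity, yields a rigid motion $\mathbf r_K=\mathbf r_K(\mathbf v)\in\mathcal R$ such that
\[
\|\mathbf v-\mathbf r_K\|_K+h_K\|\nabla(\mathbf v-\mathbf r_K)\|_K\le C\,h_K\,\|\bs\varepsilon(\mathbf v)\|_K,
\]
with $C$ depending only on the shape regularity of the grid.

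Next I would estimate the face averages of $\mathbf w:=\mathbf v-\mathbf r_K$. A trace inequality combined with the previous bound gives
\[
\sum_{F\in\mathcal E(K)}|F|\,|\overline{\mathbf w}_F|^2
\le \|\mathbf w\|_{\partial K}^2
\le C\bigl(h_K^{-1}\|\mathbf w\|_K^2+h_K\|\nabla\mathbf w\|_K^2\bigr)
\le C\,h_K\,\|\bs\varepsilon(\mathbf v)\|_K^2,
\]
where $\overline{\mathbf w}_F:=|F|^{-1}\int_F\mathbf w$. The hypothesis \eqref{eq:5.1} says $\overline{\mathbf v}_F=\mathbf 0$ for all faces $F$, hence $\overline{\mathbf w}_F=-\overline{(\mathbf r_K)}_F$ and the same bound controls the sum of squared face averages of $\mathbf r_K$.

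The third step identifies this sum as a norm on $\mathcal R$. For $\mathbf r=\mathbf a+W(\mathbf x-\mathbf x_K)$, the face average on $F$ equals $\mathbf a+W(\mathbf c_F-\mathbf x_K)$, where $\mathbf c_F$ is the face centroid; since the four centroids of a tetrahedron are affinely independent, the linear map $\mathbf r\mapsto(\overline{\mathbf r}_F)_{F\in\mathcal E(K)}$ has trivial kernel. A direct dimensional scaling, tracked carefully via shape regularity, then shows
\[
\|\mathbf r\|_K^2\le C\,h_K\sum_{F\in\mathcal E(K)}|F|\,|\overline{\mathbf r}_F|^2\qquad\forall\mathbf r\in\mathcal R,
\]
so that $\|\mathbf r_K\|_K\le C\,h_K\,\|\bs\varepsilon(\mathbf v)\|_K$. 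The triangle inequality $\|\mathbf v\|_K\le\|\mathbf v-\mathbf r_K\|_K+\|\mathbf r_K\|_K$ then finishes the proof.

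The only nonroutine point is the third step: I must keep track of how $\|\mathbf r\|_K$ and $\sum_F|F|\,|\overline{\mathbf r}_F|^2$ scale in $h_K$ for $\mathbf r\in\mathcal R$ and verify, using only shape regularity, that the ratio degenerates by at most one power of $h_K$. The injectivity of the face-centroid map on $\mathcal R$ is a purely geometric fact about tetrahedra, but combining it uniformly over the family of shape-regular tetrahedra (equivalently, passing to a scaled reference element of unit diameter and using the compactness of such a family) is where the constant depending solely on shape regularity is extracted.
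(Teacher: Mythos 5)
Your argument is correct, but it takes a genuinely different route from the paper's. The paper's proof never splits off a rigid motion: it first gets $\|\mathbf v\|_K\le C h_K\|\nabla\mathbf v\|_K$ from a scaled Poincar\'e inequality using only $\int_{\partial K}\mathbf v=\mathbf 0$, then invokes the local Korn inequality in the form $\inf_{\bs\xi\in\C^{3\times3}_{\mathrm{skw}}}\|\nabla\mathbf v+\bs\xi\|_K\le C\|\bs\varepsilon(\mathbf v)\|_K$, and closes the loop with the observation that \eqref{eq:5.1} makes $\nabla\mathbf v$ $L^2(K)$-orthogonal to \emph{every} constant matrix $\bs\xi$ (integrate by parts; $\bs\xi\mathbf n$ is facewise constant, hence an admissible test function $\bs\mu$), so the infimum is attained at $\bs\xi=\mathbf 0$ and equals $\|\nabla\mathbf v\|_K$. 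That orthogonality trick replaces your entire second and third steps. Your version instead controls the full six-dimensional rigid-motion component through the face averages, which requires the trace inequality, the affine independence of the four face centroids, and a finite-dimensional norm-equivalence-plus-scaling argument made uniform over shape-regular tetrahedra; all of these steps are sound (and you correctly flag the scaling of $\|\mathbf r\|_K^2$ versus $h_K\sum_F|F|\,|\overline{\mathbf r}_F|^2$ as the delicate point), but they are more machinery than the problem needs. What your approach buys is robustness: it would survive if the constraint only pinned down face averages without yielding the exact orthogonality $(\nabla\mathbf v,\bs\xi)_K=0$, e.g.\ on elements whose faces are not flat or with a weighted pairing. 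The paper's argument is shorter precisely because on a straight-faced simplex the constraint \eqref{eq:5.1} is algebraically exactly what is needed to kill the skew part for free.
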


\begin{proof}
A scaling argument, using only that $\int_{\partial K}\mathbf v=\mathbf 0$ and a Poincar\'e inequality on the reference element prove that
\[
\|\mathbf v\|_K \le C h_K \|\nabla\mathbf v\|_K.
\]
On the other hand, by a straightforward extension of \cite[Lemma 4.1]{QiSh:2014} to our complex-valued fields, we have the local Korn inequality
\begin{equation}\label{eq:5.2}
\inf_{\bs\xi\in \mathbb C^{3\times 3}_{\mathrm{skw}}}\| \nabla\mathbf v+\bs\xi\|_K
	\le C \|\bs\varepsilon(\mathbf v)\|_K \quad\forall\mathbf v\in H^1(K;\C^3).
\end{equation}
The constant in \eqref{eq:5.2} depends only on the shape-regularity constant of the mesh. Finally, if $\bs\xi\in \C^{3\times 3}$, then
\[
(\nabla\mathbf v,\bs\xi)_K=\langle \mathbf v,\bs\xi\mathbf n\rangle_{\partial K}=0,
\]
since $\mathbf v$ satisfies \eqref{eq:5.1}. Therefore 
\[
\inf_{\bs\xi\in \mathbb C^{3\times 3}_{\mathrm{skw}}}\| \nabla\mathbf v+\bs\xi\|_K=\|\nabla\mathbf v\|_K
\]
and the proof is finished.
\end{proof}

The following result shows that the local equations associated to \eqref{eq:2.5a}-\eqref{eq:2.5b} are uniquely solvable, i.e., given the data functions and $\widehat{\mathbf u}_h$, we can compute $\bs\sigma_h$ and $\mathbf u_h$ element by element. This is the key ingredient to show that the HDG method \eqref{eq:2.5} is actually hybridizable, that is, it can be recast as a linear system where $\widehat{\mathbf u}_h\in \Mh$ is the only variable. To simplify the proof, we introduce the weighted norms
\[
\| \bs\xi\|_{\mathcal A,K}^2:=(\mathcal A\bs\xi,\overline{\bs\xi})_K,
	\qquad
\| \mathbf v\|_{\rho,K}^2:=(\rho\,\mathbf v,\overline{\mathbf v})_K.
\]

\begin{proposition}[Local solvers]\label{prop:5.2}
If $C>0$ is the constant of Lemma \ref{lemma:5.1} and
\begin{equation}\label{eq:5.4new}
\kappa\, h_K < \frac1{C\,\| c_{\mathcal A}\|_{L^\infty(K)}^{1/2} \| \rho\|_{L^\infty(K)}^{1/2}},
\end{equation}
then the local solver associated to the element $K\in \Th$ is well defined. In other words, 
if $(\bs\sigma,\mathbf u)\in \mathcal P_k(K;\C^{3\times3}_{\mathrm{sym}})\times \mathcal P_{k+1}(K;\C^3)$ satisfies
\begin{subequations}
\begin{alignat}{6}
\label{eq:5.3a}
\imath\kappa (\mathcal A\bs\sigma,\bs\xi)_K -(\mathbf u,\nabla\cdot\bs\xi)_K 
	&=0
	&\qquad &\forall \bs\xi\in \mathcal P_k(K;\C^{3\times3}_{\mathrm{sym}}),\\
\label{eq:5.3b}
(\nabla\cdot\bs\sigma,\mathbf w)_K+\langle\bs\tau\PM\mathbf u,\mathbf w\rangle_{\partial K}
	+\imath\kappa(\rho\mathbf u,\mathbf w)_K
	&=0
	&\qquad & \forall\mathbf w\in \mathcal P_{k+1}(K;\C^3),
	\end{alignat}
\end{subequations}
then $(\bs\sigma,\mathbf u)=(\mathbf 0,\mathbf 0)$. 
\end{proposition}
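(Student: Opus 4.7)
The plan is to carry out a standard HDG energy argument, combined with a bootstrap that uses Lemma~\ref{lemma:5.1} to convert a bound on $\bs\varepsilon(\mathbf u)$ into a bound on $\mathbf u$ itself.

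First I would test \eqref{eq:5.3a} with $\bs\xi=\overline{\bs\sigma}$ and \eqref{eq:5.3b} with $\mathbf w=\overline{\mathbf u}$. After taking the complex conjugate of the first identity (using that $\mathcal A$ is a real symmetric operator, so $(\mathcal A\overline{\bs\sigma},\bs\sigma)_K=\|\bs\sigma\|_{\mathcal A,K}^2$) the term $(\nabla\cdot\bs\sigma,\overline{\mathbf u})_K$ appearing in the second identity can be replaced by $-\imath\kappa\|\bs\sigma\|_{\mathcal A,K}^2$, giving the energy identity
\[
\langle \bs\tau\PM\mathbf u,\overline{\mathbf u}\rangle_{\partial K}
=\imath\kappa\bigl(\|\bs\sigma\|_{\mathcal A,K}^2-\|\mathbf u\|_{\rho,K}^2\bigr).
\]
Since $\bs\tau$ is piecewise constant and symmetric, $\langle\bs\tau\PM\mathbf u,\overline{\mathbf u}\rangle_{\partial K}=\langle\bs\tau\PM\mathbf u,\PM\overline{\mathbf u}\rangle_{\partial K}$ is real and nonnegative. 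The right-hand side is purely imaginary, so both sides vanish. From \eqref{eq:tau} this forces $\PM\mathbf u=\mathbf 0$ on $\partial K$, and simultaneously $\|\bs\sigma\|_{\mathcal A,K}=\|\mathbf u\|_{\rho,K}$.

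Next, to extract control on $\bs\varepsilon(\mathbf u)$, I would go back to \eqref{eq:5.3a} and test with the admissible choice $\bs\xi=\bs\varepsilon(\overline{\mathbf u})\in\mathcal P_k(K;\C^{3\times3}_{\mathrm{sym}})$. Integrating by parts,
\[
-(\mathbf u,\nabla\cdot\bs\varepsilon(\overline{\mathbf u}))_K
=\|\bs\varepsilon(\mathbf u)\|_K^2-\langle\mathbf u,\bs\varepsilon(\overline{\mathbf u})\mathbf n\rangle_{\partial K},
\]
and since $\bs\varepsilon(\overline{\mathbf u})\mathbf n|_F\in\mathcal P_k(F;\C^3)$ the boundary term collapses against $\PM\mathbf u=\mathbf 0$. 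Cauchy--Schwarz combined with the bound $\|\mathcal A\bs\sigma\|_K\le\|c_{\mathcal A}\|_{L^\infty(K)}^{1/2}\|\bs\sigma\|_{\mathcal A,K}$ (an easy consequence of \eqref{eq:5.3new} since $\mathcal A$ is symmetric positive definite pointwise) then yields
\[
\|\bs\varepsilon(\mathbf u)\|_K\le \kappa\,\|c_{\mathcal A}\|_{L^\infty(K)}^{1/2}\|\bs\sigma\|_{\mathcal A,K}.
\]

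Finally, since $\PM\mathbf u=\mathbf 0$ implies condition \eqref{eq:5.1}, Lemma~\ref{lemma:5.1} applies and, after chaining the bounds through the identity $\|\bs\sigma\|_{\mathcal A,K}=\|\mathbf u\|_{\rho,K}\le\|\rho\|_{L^\infty(K)}^{1/2}\|\mathbf u\|_K$, I obtain
\[
\|\mathbf u\|_K\le C\,\kappa\,h_K\,\|c_{\mathcal A}\|_{L^\infty(K)}^{1/2}\|\rho\|_{L^\infty(K)}^{1/2}\|\mathbf u\|_K.
\]
The smallness hypothesis \eqref{eq:5.4new} makes the prefactor strictly less than one, forcing $\mathbf u=\mathbf 0$ and then $\bs\sigma=\mathbf 0$ from the energy identity. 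The only place where care is genuinely needed is in the first step, where the bookkeeping between bilinear brackets and complex conjugates has to be done so that taking real/imaginary parts cleanly isolates $\PM\mathbf u=\mathbf 0$; everything downstream is a clean Cauchy--Schwarz plus Korn/Poincar\'e chain.
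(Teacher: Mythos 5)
Your argument is correct and follows essentially the same route as the paper's proof: the energy identity obtained by testing with $\overline{\bs\sigma}$ and $\overline{\mathbf u}$ (forcing $\PM\mathbf u=\mathbf 0$ and $\|\bs\sigma\|_{\mathcal A,K}=\|\mathbf u\|_{\rho,K}$ by separating the real nonnegative boundary term from the purely imaginary volume terms), then testing \eqref{eq:5.3a} with $\bs\varepsilon(\overline{\mathbf u})$, and finally closing the loop with Lemma \ref{lemma:5.1} and the smallness condition \eqref{eq:5.4new}. The only cosmetic difference is which of the two equations you conjugate before adding; the resulting identity and all subsequent steps coincide with the paper's.
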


\begin{proof}
Note that we only need to prove that $\mathbf u=\mathbf 0$. Testing \eqref{eq:5.3a} with $\overline{\bs\sigma}$, conjugating \eqref{eq:5.3b} and testing it with $\overline{\mathbf u}$, and adding the result of these two equations, it follows that
\[
\imath\kappa\left(\|\bs\sigma\|_{\mathcal A,K}^2-\|\mathbf u\|_{\rho,K}^2\right)
	+\langle\bs\tau\PM \overline{\mathbf u},\PM \mathbf u\rangle_{\partial K}=0.
\]
By \eqref{eq:tau} it follows that $\PM \mathbf u=\mathbf 0$ and $\|\bs\sigma\|_{\mathcal A,K}=\|\mathbf u\|_{\rho,K}$. Going back to \eqref{eq:5.3a}, integrating by parts, and using that $\PM \mathbf u=\mathbf 0$, it follows that
\begin{equation}\label{eq:5.4}
\imath\kappa (\mathcal A\bs\sigma,\bs\xi)_K+(\nabla\mathbf u,\bs\xi)_K=0 \quad \forall  \bs\xi\in \mathcal P_k(K;\C^{3\times3}_{\mathrm{sym}}).
\end{equation}
Testing \eqref{eq:5.4} with $\bs\xi=\bs\varepsilon(\overline{\mathbf u})$, it follows that
\[
\| \bs\varepsilon( \mathbf u)\|_K^2
	= (\nabla\mathbf u,\bs\varepsilon(\overline{\mathbf u}))_K
	        = \kappa | (\mathcal A\bs\sigma,\bs\varepsilon(\overline{\mathbf u}))_K|  \le 
	        \kappa \| c_{\mathcal A}\|_{L^\infty(K)}^{1/2} \|\bs\sigma\|_{\mathcal A,K} \| \bs\varepsilon( \mathbf u)\|_K,
\]
where we have used \eqref{eq:5.3new}. Note that $\mathbf u$ satisfies \eqref{eq:5.1}, given the fact that $\PM\mathbf u=\mathbf 0$. Therefore, by Lemma \ref{lemma:5.1}, if $\bs\varepsilon(\mathbf u)=\mathbf 0$, then $\mathbf u=\mathbf 0$ and the proof is finished. Otherwise $\mathbf u\neq \mathbf 0$ and, by Lemma \ref{lemma:5.1} and the equality $\|\bs\sigma\|_{\mathcal A,K}=\|\mathbf u\|_{\rho,K}$, we can bound
\begin{eqnarray*}
\| \mathbf u\|_K & \le & C h_K \| \bs\varepsilon(\mathbf u)\|_K 
			\le C \kappa\, h_K \| c_{\mathcal A}\|_{L^\infty(K)}^{1/2} \|\bs\sigma\|_{\mathcal A,K}
			= C \kappa\, h_K \| c_{\mathcal A}\|_{L^\infty(K)}^{1/2} \|\mathbf u\|_{\rho,K} \\
			&\le & C \kappa\, h_K \| c_{\mathcal A}\|_{L^\infty(K)}^{1/2} \| \rho\|_{L^\infty(K)}^{1/2}
					\|\mathbf u\|_K
\end{eqnarray*}
and we arrive at a contradiction if \eqref{eq:5.4new} holds.
\end{proof}

\begin{proposition}\label{prop:5.3}
\begin{alignat*}{6}
& \imath\kappa (\|\esig\|_{\mathcal A}^2-\|\eu\|_\rho^2)+\|\PM \eu-\euhat\|_\tau^2 \\
& \hspace{5pt} =\imath\kappa\left( (\mathcal A\epsig,\overline\esig)_\Th-(\rho \overline\epu,\eu)_\Th\right)
	+\langle\overline\epsig\mathbf n,\eu-\euhat\rangle_{\partial\Th}
	+\langle\bs\tau \, \overline\epu, \PM \eu-\euhat\rangle_{\partial\Th}.
\end{alignat*}
\end{proposition}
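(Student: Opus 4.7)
The plan is to extract three error equations from the discrete system \eqref{eq:2.5} by Galerkin consistency, test them against suitable conjugated errors, and combine, exploiting the $L^2$-orthogonality of the projectors $\PV$, $\PW$, $\PM$ and the transmission equation to eliminate unwanted boundary terms. Concretely, inserting the exact solution into each piece of \eqref{eq:2.5} (identifying the exact numerical flux with $\mathbf u|_{\mathcal E_h}$) and subtracting \eqref{eq:2.5a}, \eqref{eq:2.5bnew}, and \eqref{eq:2.5c} gives
\begin{align*}
\imath\kappa(\mathcal A\esig, \bs\xi)_\Th - (\eu, \nabla\cdot\bs\xi)_\Th + \langle\euhat, \bs\xi\mathbf n\rangle_{\partial\Th} &= \imath\kappa(\mathcal A\epsig, \bs\xi)_\Th, \\
(\nabla\cdot\esig, \mathbf w)_\Th + \langle\bs\tau(\PM\eu - \euhat), \PM\mathbf w\rangle_{\partial\Th} + \imath\kappa(\rho\eu, \mathbf w)_\Th &= (\nabla\cdot\epsig, \mathbf w)_\Th + \langle\bs\tau\epu, \PM\mathbf w\rangle_{\partial\Th} + \imath\kappa(\rho\epu, \mathbf w)_\Th, \\
\langle\esig\mathbf n + \bs\tau(\PM\eu - \euhat), \bs\mu\rangle_{\partial\Th\setminus\Gamma_D} &= \langle\epsig\mathbf n + \bs\tau\PM\epu, \bs\mu\rangle_{\partial\Th\setminus\Gamma_D},
\end{align*}
for all $(\bs\xi,\mathbf w,\bs\mu)\in\Vh\times\Wh\times\Mh$, together with $\euhat|_{\Gamma_D} = 0$ from \eqref{eq:2.5d}. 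The clean right-hand sides rely on three $L^2$-orthogonality facts: $(\epu, \nabla\cdot\bs\xi)_\Th = 0$ (since $\nabla\cdot\bs\xi$ has degree $\le k-1$), $\langle\mathbf u - \PM\mathbf u, \bs\xi\mathbf n\rangle_{\partial\Th} = 0$ (since $\bs\xi\mathbf n|_F\in\mathcal P_k(F;\C^3)$), and continuity of $\bs\sigma\mathbf n$ across internal faces together with the Neumann condition.

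Next, I would test the first equation with $\bs\xi = \overline\esig$ and the \emph{complex conjugate} of the second with $\mathbf w = \overline\eu$; bilinearity of the brackets turns conjugation into a sign flip on $\imath\kappa$ plus bars throughout. Subtracting, and using the symmetry $(\eu, \nabla\cdot\overline\esig)_\Th = (\nabla\cdot\overline\esig, \eu)_\Th$ to cancel the divergence cross-terms, yields
\begin{align*}
\imath\kappa(\|\esig\|_{\mathcal A}^2 - \|\eu\|_\rho^2) ={}& \imath\kappa\bigl[(\mathcal A\epsig,\overline\esig)_\Th - (\rho\overline\epu,\eu)_\Th\bigr] - \langle\euhat, \overline\esig\mathbf n\rangle_{\partial\Th} \\
& - \langle\bs\tau(\PM\overline\eu - \overline\euhat), \PM\eu\rangle_{\partial\Th} + (\nabla\cdot\overline\epsig, \eu)_\Th + \langle\bs\tau\overline\epu, \PM\eu\rangle_{\partial\Th}.
\end{align*}

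To produce the stabilization norm on the left, I use the symmetry of $\bs\tau$ to write $\|\PM\eu - \euhat\|_\tau^2 = \langle\bs\tau(\PM\overline\eu - \overline\euhat), \PM\eu\rangle_{\partial\Th} - \langle\bs\tau(\PM\overline\eu - \overline\euhat), \euhat\rangle_{\partial\Th}$ and add this to both sides. The first piece cancels its counterpart in the preceding display. The second piece is handled by applying the conjugate of the third error equation with $\bs\mu = \euhat \in \Mh$ (legitimate because $\euhat|_{\Gamma_D} = 0$ lets us extend the domain of integration from $\partial\Th\setminus\Gamma_D$ to $\partial\Th$), which exchanges $\langle\bs\tau(\PM\overline\eu - \overline\euhat), \euhat\rangle_{\partial\Th}$ for $-\langle\overline\esig\mathbf n, \euhat\rangle_{\partial\Th} + \langle\overline\epsig\mathbf n, \euhat\rangle_{\partial\Th} + \langle\bs\tau\PM\overline\epu, \euhat\rangle_{\partial\Th}$; the $\overline\esig\mathbf n$ contribution cancels $\langle\euhat, \overline\esig\mathbf n\rangle_{\partial\Th}$ by bracket symmetry.

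What remains is to tidy the projection-error boundary terms. Integrating $(\nabla\cdot\overline\epsig, \eu)_\Th$ by parts elementwise gives $\langle\overline\epsig\mathbf n, \eu\rangle_{\partial\Th} - (\overline\epsig, \bs\varepsilon(\eu))_\Th$; the volume piece vanishes because $\overline\epsig \perp \Vh$ in $L^2$ while $\bs\varepsilon(\eu) \in \mathcal P_k(K;\C^{3\times 3}_{\mathrm{sym}}) = \Vh|_K$---this is the orthogonality made possible by taking $\Wh = \mathcal P_{k+1}$, one degree higher than $\Vh$. Similarly, $\langle\bs\tau\PM\overline\epu, \euhat\rangle_{\partial\Th} = \langle\bs\tau\overline\epu, \euhat\rangle_{\partial\Th}$ because $\bs\tau\euhat|_F$ has degree $k$, so combined with $\langle\bs\tau\overline\epu, \PM\eu\rangle_{\partial\Th}$ the boundary contributions telescope into $\langle\bs\tau\overline\epu, \PM\eu - \euhat\rangle_{\partial\Th}$. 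After these substitutions every term collapses into the stated identity. The principal obstacle is bookkeeping: carefully tracking conjugates, signs, and polynomial degrees so that each projection orthogonality fires at the right moment.
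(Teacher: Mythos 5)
Your proof is correct and follows essentially the same route as the paper's: derive the error equations by Galerkin consistency and $L^2$-projection orthogonality, test them with the conjugated errors $\overline\esig$, $\overline\eu$, $\overline\euhat$, and combine, with only cosmetic differences (you defer the integration by parts of $(\nabla\cdot\overline\epsig,\eu)_\Th$ to the end, and you feed the transmission equation in as a substitution rather than testing it directly with $-\overline\euhat$). One small slip in wording: the displayed intermediate identity is what you get by \emph{adding} the first tested equation to the conjugated second one---only then do the divergence cross-terms cancel and the sign of $\imath\kappa\|\eu\|_\rho^2$ come out right---but since the identity you actually wrote is the correct one, nothing downstream is affected.
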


\begin{proof}
Substituting $(\PV\bs\sigma,\PW\mathbf u,\PM \mathbf u)$, where $(\bs\sigma,\mathbf u)$ is the exact solution of \eqref{eq:2.2} in the left-hand side of discrete equations, and subtracting the actual discrete equations \eqref{eq:2.5} (with \eqref{eq:2.5b} better written in the form \eqref{eq:2.5bnew}), it is simple to prove that the errors $(\esig,\eu,\euhat) \in \Vh\times \Wh\times \Mh$ satisfy
\begin{subequations}\label{eq:5.7}
\begin{alignat}{6}
\label{eq:5.7a}
\imath\kappa (\mathcal A \esig,\bs\xi)_\Th-(\eu,\nabla\cdot\bs\xi)_\Th+\langle \euhat,\bs\xi\mathbf n\rangle_{\partial\Th}
	&=\imath\kappa (\mathcal A \epsig,\bs\xi)_\Th,\\
\nonumber
(\nabla\cdot\esig,\mathbf w)_\Th+\imath\kappa (\rho\eu,\mathbf w)_\Th\\
\label{eq:5.7b}
	+\langle\bs\tau(\PM \eu-\euhat),\PM\mathbf w\rangle_{\partial\Th} 
	&=\imath\kappa(\rho\epu,\mathbf w)_\Th\\
\nonumber
	&\phantom{=}+\langle \epsig\mathbf n,\mathbf w\rangle_{\partial\Th}
		+\langle\bs\tau\epu,\PM\mathbf w\rangle_{\partial\Th},\\
\label{eq:5.7c}
\langle \esig\mathbf n+\bs\tau(\PM\eu-\euhat),\bs\mu\rangle_{\partial\Th\setminus\Gamma_D}
	&=\langle\epsig\mathbf n,\bs\mu\rangle_{\partial\Th\setminus\Gamma_D}
		+\langle\bs\tau \PM\epu,\bs\mu\rangle_{\partial\Th\setminus\Gamma_D}\\
\label{eq:5.7d}
\langle\euhat,\bs\mu\rangle_{\Gamma_D} &=0,
\end{alignat}
\end{subequations}
for all $(\bs\xi,\mathbf w,\bs\mu)\in\Vh\times \Wh\times \Mh$. Clearly \eqref{eq:5.7d} is equivalent to $\euhat=\mathbf 0$ on $\Gamma_D$. We now (i) test \eqref{eq:5.7a} with $\bs\xi=\overline\esig$, (ii) conjugate \eqref{eq:5.7b} and test the result with $\mathbf w=\overline\eu$, (iii) conjugate \eqref{eq:5.7c} and test the result with $\bs\mu=-\overline\euhat$. The results of these three steps are added and reorganized (\eqref{eq:5.7d} is used for this) to prove the proposition.
\end{proof}

We next bound the last two terms in the right-hand side of the identity in Proposition \ref{prop:5.3}.
From this moment on, we will frequently use, without additional warning, approximation properties of the $L^2$ projections onto the space of piecewise polynomial functions.

\begin{proposition}\label{prop:5.4}
If $k\ge 1$, 
\begin{alignat*}{4}
 \left|\langle\overline\epsig\mathbf n,\eu-\euhat\rangle_{\partial\Th}
	+\langle\bs\tau \, \overline\epu, \PM \eu-\euhat\rangle_{\partial\Th}\right|
& \le C_1 
	\left( h^t |\bs\sigma |_{t,\Omega} + h^{s-1} |\mathbf u|_{s,\Omega}\right) \| \PM \eu-\euhat\|_\tau \\
&  \phantom{\le}
	+C_2 \kappa   h^t |\bs\sigma|_{t,\Omega}  
		\left(h^t |\bs\sigma|_{t,\Omega} +\|\esig\|_{\mathcal A}\right),
\end{alignat*}
if $\bs\sigma\in H^t(\Omega;\C^{3\times 3})$ for $1\le t \le k+1$ and $\mathbf u\in H^s(\Omega;\C^3)$ for $1\le s\le k+2$.
\end{proposition}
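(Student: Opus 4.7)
My plan is to decompose $\eu-\euhat=(\PM\eu-\euhat)+(\eu-\PM\eu)$ and bound the resulting contributions separately. The pieces paired with $\PM\eu-\euhat$ are straightforward: a Cauchy--Schwarz in the $\tau$-norm combined with standard trace and approximation estimates for $L^2$ projections gives
\[
|\langle\bs\tau\overline\epu,\PM\eu-\euhat\rangle_{\partial\Th}|+|\langle\overline\epsig\mathbf n,\PM\eu-\euhat\rangle_{\partial\Th}|
\lesssim\bigl(\|\epu\|_\tau+\|\bs\tau^{-1/2}\epsig\mathbf n\|_{\partial\Th}\bigr)\|\PM\eu-\euhat\|_\tau,
\]
and the two factors in parentheses are dominated by $h^{s-1}|\mathbf u|_{s,\Omega}$ and $h^{t}|\bs\sigma|_{t,\Omega}$ respectively, producing the first term of the target estimate.

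The delicate term is $\langle\overline\epsig\mathbf n,\eu-\PM\eu\rangle_{\partial\Th}$, which must generate the $\kappa$-dependent contribution. The key observation is that $\PV\overline{\bs\sigma}\mathbf n$ restricts to a polynomial of degree at most $k$ on every face, and $\eu-\PM\eu$ is $L^2$-orthogonal to such polynomials (as is $\PM(\overline{\bs\sigma}\mathbf n)$), so that
\[
\langle\overline\epsig\mathbf n,\eu-\PM\eu\rangle_{\partial\Th}=-\langle(I-\PM)\overline{\bs\sigma}\mathbf n,\eu-\PM\eu\rangle_{\partial\Th}.
\]
A Cauchy--Schwarz in the $\tau$-norm together with the projection error bound $\|(I-\PM)\overline{\bs\sigma}\mathbf n\|_{\partial K}\lesssim h_K^{t-1/2}|\bs\sigma|_{t,K}$ controls this by $h^t|\bs\sigma|_{t,\Omega}\|\eu-\PM\eu\|_\tau$. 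Since $\PM$ preserves rigid motions for $k\ge 1$, a scale-invariant Korn second inequality applied to $\eu$ modulo rigid motions, followed by a trace inequality, gives $\|\eu-\PM\eu\|_{\partial K}\lesssim h_K^{1/2}\|\bs\varepsilon(\eu)\|_K$, whence $\|\eu-\PM\eu\|_\tau\lesssim\|\bs\varepsilon(\eu)\|_\Th$.

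Finally, I control $\|\bs\varepsilon(\eu)\|_\Th$ through the error equation \eqref{eq:5.7a}. Because $\bs\varepsilon(\overline\eu)\in\Vh$, testing \eqref{eq:5.7a} with $\bs\xi=\bs\varepsilon(\overline\eu)$ and integrating by parts elementwise yields the identity
\[
\|\bs\varepsilon(\eu)\|_\Th^2=\langle\PM\eu-\euhat,\bs\varepsilon(\overline\eu)\mathbf n\rangle_{\partial\Th}+\imath\kappa(\mathcal A(\epsig-\esig),\bs\varepsilon(\overline\eu))_\Th,
\]
where I have used that $\bs\varepsilon(\overline\eu)\mathbf n|_F\in\mathcal P_k(F;\C^3)$ to replace $\eu$ by $\PM\eu$ in the boundary term. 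Trace and inverse estimates for polynomials, along with the boundedness of $\mathcal A$ and $\|\epsig\|_\Th\lesssim h^t|\bs\sigma|_{t,\Omega}$, bound the right-hand side by $\bigl(\|\PM\eu-\euhat\|_\tau+\kappa(h^t|\bs\sigma|_{t,\Omega}+\|\esig\|_{\mathcal A})\bigr)\|\bs\varepsilon(\eu)\|_\Th$; dividing by $\|\bs\varepsilon(\eu)\|_\Th$ and substituting back closes the estimate. The main obstacle is spotting the orthogonality trick that trades $\overline\epsig\mathbf n$ for $(I-\PM)\overline{\bs\sigma}\mathbf n$: it is this trade that buys the extra half power of $h$ and identifies $\bs\varepsilon(\eu)$---controllable via the error equation at the cost of exactly one factor of $\kappa$---as the right quantity to estimate.
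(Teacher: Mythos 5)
Your argument is correct and follows essentially the same route as the paper: the decisive step in both is testing the error equation \eqref{eq:5.7a} with $\bs\xi=\bs\varepsilon(\overline\eu)$ to obtain $\|\bs\varepsilon(\eu)\|_\Th\lesssim\kappa\left(\|\epsig\|_{\mathcal A}+\|\esig\|_{\mathcal A}\right)+\|\PM\eu-\euhat\|_\tau$, which is then inserted into the boundary-term bounds to produce the $\kappa$-weighted contribution. The only difference is that the paper delegates the estimates \eqref{eq:5.8a}--\eqref{eq:5.8b} to a citation of \cite{QiSh:2014}, whereas you prove them directly via the $\PM$-orthogonality trick and the rigid-motion/Korn argument, which is indeed where the hypothesis $k\ge 1$ enters.
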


\begin{proof}
Following \cite[Lemma 4.3]{QiSh:2014}, it is possible to show that
\begin{subequations}\label{eq:5.8}
\begin{eqnarray}
	\label{eq:5.8a}
\left|\langle\overline\epsig\mathbf n,\eu-\euhat\rangle_{\partial\Th}\right|
	& \le & C h^t |\bs\sigma |_{t,\Omega} 
		\left( \| \PM \eu-\euhat\|_\tau + \| \bs\varepsilon(\eu)\|_\Th\right)  \\
\left| \langle\bs\tau \, \overline\epu, \PM \eu-\euhat\rangle_{\partial\Th}\right|
	&\le & C h^{s-1}|\mathbf u|_{s,\Omega}\| \PM \eu-\euhat\|_\tau.
	\label{eq:5.8b}
\end{eqnarray}
\end{subequations}
We recall that the argument leading to the proof of \eqref{eq:5.8b} needs the traces of rigid motions to be in $\Mh$, which is where the additional hypothesis $k\ge 1$ is used.

We next test the first error equation \eqref{eq:5.7a} with $\bs\xi=\bs\varepsilon(\overline\eu)$ restricted to $K$ to obtain
\begin{equation}\label{eq:5.9}
\| \bs\varepsilon(\eu)\|_K^2 = (\nabla \eu,\bs\varepsilon(\overline\eu))_K
	=\imath\kappa ( \mathcal A(\epsig-\esig),\bs\varepsilon(\overline\eu))_K
		+\langle \PM \eu-\euhat,\bs\varepsilon(\overline\eu)\mathbf n\rangle_{\partial K}.
\end{equation}
The scaling hypothesis on $\bs\tau$ given in \eqref{eq:tau} and a scaling argument using the fact that $\bs\varepsilon(\eu)$ is a polynomial on $K$ show then
\begin{eqnarray*}
\left|\langle \PM \eu-\euhat,\bs\varepsilon(\overline\eu)\mathbf n\rangle_{\partial K}\right|
	&\le & \| \PM \eu-\euhat\|_{\partial K} \| \bs\varepsilon(\eu)\|_{\partial K} \\
	&\le & C \| \bs\tau_K^{1/2} (\PM \eu-\euhat)\|_{\partial K} 
		h_K^{1/2} \| \bs\varepsilon(\eu)\|_{\partial K} \\
	&\le & C' \| \bs\tau_K^{1/2} (\PM \eu-\euhat)\|_{\partial K}  \| \bs\varepsilon(\eu)\|_K.
\end{eqnarray*}
Substituting these bounds in the right-hand side of \eqref{eq:5.9}, and adding over all elements, using \eqref{eq:5.3new} (the spectral bounds on the inverse compliance tensor) it follows that
\begin{equation}\label{eq:5.10}
\| \bs\varepsilon(\eu)\|_\Th \le C \kappa \left( \| \epsig\|_{\mathcal A}+\| \esig\|_{\mathcal A}\right)
			+ C \| \PM \eu-\euhat\|_\tau.
\end{equation}
Plugging \eqref{eq:5.10} in \eqref{eq:5.8}, the proposition is proved.
\end{proof}

\section{Dual problem and bootstrapping process}

We consider the adjoint system to \eqref{eq:2.2}:
\begin{subequations}\label{eq:6.1}
\begin{alignat}{6}
-\imath\kappa\mathcal A\bs\psi -\bs\varepsilon(\bs\phi) &=\mathbf 0
	&\qquad &\mbox{in $\Omega$},\\
-\nabla\cdot\bs\psi -\imath\kappa\,\rho\,\bs\phi &=\eu
	&\qquad &\mbox{in $\Omega$},\\
\bs\phi &=\mathbf 0
	&\qquad &\mbox{on $\Gamma_D$},\\
\bs\psi\mathbf n &=\mathbf 0
	&\qquad &\mbox{on $\Gamma_N$}.
\end{alignat}
\end{subequations}
This problem is uniquely solvable if \eqref{eq:3.3}, or equivalently, \eqref{eq:2.1}, is. If we assume regularity for the coercive problem \eqref{eq:3.1}, expressed in the bound \eqref{eq:3.2}, and assume the local smoothness of the coefficients given at the beginning of Section \ref{sec:3}, then it is easy to see that $\bs\phi\in H^2(\Omega;\mathbb C^3)$ and $\bs\psi\in H^1(\Omega;\mathbb C^{3\times3})$. Morever, we can bound 
\begin{equation}\label{eq:6.2}
\| \bs\phi\|_{1,\Omega} + \| \rho\bs\phi\|_{1,\Th}+\| \mathcal A \bs\psi\|_{1,\Omega}+ \|\bs\psi\|_{1,\Th}
+\kappa^{-1} \|\bs\phi\|_{2,\Omega} \le D_\kappa\|\eu\|_\Omega,
\end{equation}
where $D_\kappa:= C\,(1+\kappa C_\kappa+C_\kappa)$, $C_\kappa$ being the constant in \eqref{eq:3.4} and $C$ being allowed to depend on the regularity constant \eqref{eq:3.2} as well as on the physical coefficients. Here the norm $\|\cdot\|_{1,\Th}$ is the natural norm of the broken Sobolev space $\prod_{K\in \Th} H^1(K)$. 
We note that, while the regularity requirement can be somewhat relaxed, the analysis in this paper (see also \cite{QiSh:2014} and \cite{GrMo:2011}) needs a certain amount of regularity for the solution of the dual problem (which can be translated to regularity of the solution of \eqref{eq:3.1}) due to the need of having square integrable normal traces of $\bs\psi$ on the faces of the elements.

\begin{proposition}[Duality identity]
\label{prop:6.1}
\begin{eqnarray*}
\| \eu\|_\Omega^2
	&=& \imath\kappa \left( (\mathcal A\esig,\overline{\bs\psi}-\PV\overline{\bs\psi})_\Th+(\mathcal A\epsig,\PV\overline{\bs\psi})_\Th\right)\\	
	& & +\imath\kappa \left((\rho\,\eu,\overline{\bs\phi}-\PW\overline{\bs\phi})_\Th+(\rho\,\epu,\PW\overline{\bs\phi})_\Th\right)\\
	& & +\langle \eu-\euhat,(\overline{\bs\psi}-\PV\overline{\bs\psi})\mathbf n\rangle_{\partial\Th}\\
	& & +\langle\bs\tau(\PM \eu-\euhat)-\bs\tau\PM\epu,\overline{\bs\phi}-\PW\overline{\bs\phi}\rangle_{\partial\Th}
	       + \langle \epsig\mathbf n,\PW\overline{\bs\phi}-\PM\overline{\bs\phi}\rangle_{\partial\Th}.
\end{eqnarray*}
\end{proposition}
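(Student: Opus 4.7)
The plan is to express $\|\eu\|_\Omega^2$ via the conjugated adjoint problem and then eliminate the resulting volume and boundary terms systematically using the four error equations \eqref{eq:5.7a}--\eqref{eq:5.7d}, exploiting $L^2$-orthogonality of the three projections $\PV,\PW,\PM$ at each step.

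I would begin from $\|\eu\|_\Omega^2=(\overline\eu,\eu)_\Omega$ and substitute the conjugate of the second equation of \eqref{eq:6.1}, namely $\overline\eu=-\nabla\cdot\overline{\bs\psi}+\imath\kappa\rho\overline{\bs\phi}$. Element-wise integration by parts combined with the symmetry of $\overline{\bs\psi}$ converts the divergence term to $(\overline{\bs\psi},\bs\varepsilon(\eu))_\Th$ plus a boundary contribution. The key observation is that $\bs\varepsilon(\eu)|_K$ is a symmetric polynomial of degree $k$ and hence lies in $\Vh|_K$, so orthogonality of $\PV$ gives $(\overline{\bs\psi}-\PV\overline{\bs\psi},\bs\varepsilon(\eu))_\Th=0$. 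This lets me replace $\overline{\bs\psi}$ by $\PV\overline{\bs\psi}$ in the volume term; the mismatch in the boundary contribution is precisely what will generate the projection-difference pairing $\langle\cdot,(\overline{\bs\psi}-\PV\overline{\bs\psi})\mathbf n\rangle_{\partial\Th}$ of the target.

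Next, \eqref{eq:5.7a} tested with $\bs\xi=\PV\overline{\bs\psi}$ trades the resulting $(\nabla\cdot\PV\overline{\bs\psi},\eu)_\Th$ for $\imath\kappa(\mathcal A\esig,\PV\overline{\bs\psi})_\Th$, $\imath\kappa(\mathcal A\epsig,\PV\overline{\bs\psi})_\Th$, and a boundary term. To recast $\imath\kappa(\mathcal A\esig,\PV\overline{\bs\psi})_\Th$ into the target form $\imath\kappa(\mathcal A\esig,\overline{\bs\psi}-\PV\overline{\bs\psi})_\Th$, I would add and subtract $\imath\kappa(\mathcal A\esig,\overline{\bs\psi})_\Th$ and use the conjugated first adjoint equation $\imath\kappa\mathcal A\overline{\bs\psi}=\bs\varepsilon(\overline{\bs\phi})$ followed by integration by parts, to rewrite $\imath\kappa(\mathcal A\esig,\overline{\bs\psi})_\Th=-(\nabla\cdot\esig,\overline{\bs\phi})_\Th+\langle\esig\mathbf n,\overline{\bs\phi}\rangle_{\partial\Th}$. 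Since $\nabla\cdot\esig$ is piecewise of degree $k-1\in\Wh$ and $\esig\mathbf n$ is of degree $k$ per face, the orthogonalities of $\PW$ and $\PM$ replace $\overline{\bs\phi}$ by $\PW\overline{\bs\phi}$ in the volume and by $\PM\overline{\bs\phi}$ on faces.

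Then \eqref{eq:5.7b} tested with $\mathbf w=\PW\overline{\bs\phi}$ removes $(\nabla\cdot\esig,\PW\overline{\bs\phi})_\Th$; the arising $\imath\kappa(\rho\,\eu,\PW\overline{\bs\phi})_\Th$ combines with the retained $\imath\kappa(\rho\overline{\bs\phi},\eu)_\Th$ to yield $\imath\kappa(\rho\,\eu,\overline{\bs\phi}-\PW\overline{\bs\phi})_\Th$, while $\imath\kappa(\rho\,\epu,\PW\overline{\bs\phi})_\Th$ appears verbatim. Finally, \eqref{eq:5.7c} tested with $\bs\mu=\PM\overline{\bs\phi}$ removes $\langle\esig\mathbf n,\PM\overline{\bs\phi}\rangle_{\partial\Th}$; here I use that $\overline{\bs\phi}|_{\Gamma_D}=0$ implies $\PM\overline{\bs\phi}|_{\Gamma_D}=0$, which extends the integral from $\partial\Th\setminus\Gamma_D$ to $\partial\Th$. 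Collecting the surviving boundary terms, the $\esig\mathbf n$ pieces reassemble into $\langle\epsig\mathbf n,\PW\overline{\bs\phi}-\PM\overline{\bs\phi}\rangle_{\partial\Th}$; the $\bs\tau$ pieces, using that $\bs\tau(\PM\eu-\euhat)$ and $\bs\tau\PM\epu$ are face-wise of degree $k$ so that $\PM$-orthogonality erases the projections, assemble into $\langle\bs\tau(\PM\eu-\euhat)-\bs\tau\PM\epu,\overline{\bs\phi}-\PW\overline{\bs\phi}\rangle_{\partial\Th}$; and the $\overline{\bs\psi}$ pieces collapse to $\langle\eu-\euhat,(\overline{\bs\psi}-\PV\overline{\bs\psi})\mathbf n\rangle_{\partial\Th}$ after invoking \eqref{eq:5.7d} ($\euhat|_{\Gamma_D}=0$) and the Neumann boundary condition $\overline{\bs\psi}\mathbf n|_{\Gamma_N}=0$, which together force $\langle\euhat,\overline{\bs\psi}\mathbf n\rangle_{\partial\Th}=0$.

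The main obstacle is the careful bookkeeping of boundary terms and the correct orchestration of three distinct projection orthogonalities. Each face pairing must be rewritten against the proper polynomial space---$\Vh$-traces for $\esig\mathbf n$, $\Wh$ for $\overline{\bs\phi}$ in the interior, $\Mh$ for $\overline{\bs\phi}$ on faces---so that the projection-difference structure of the target emerges, and the primal/dual boundary conditions must be combined at exactly the right step to eliminate the otherwise-stranded $\langle\euhat,\overline{\bs\psi}\mathbf n\rangle_{\partial\Th}$ coupling.
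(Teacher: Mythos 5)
Your argument is correct and is essentially the paper's own proof: the paper packages the identical computation as the sum of the conjugated adjoint equations tested with $(\esig,\eu,\euhat)$ (after elementwise integration by parts and insertion of $\PV,\PW,\PM$ exactly where you insert them) combined with the error equations \eqref{eq:5.7} tested with $(\PV\overline{\bs\psi},\PW\overline{\bs\phi},\PM\overline{\bs\phi})$, whereas you carry out the same substitutions sequentially. The one point to add is that $\langle\euhat,\overline{\bs\psi}\mathbf n\rangle_{\partial\Th}=0$ also requires the continuity of the normal trace of $\bs\psi$ across interior faces together with the single-valuedness of $\euhat$ there (so that the doubly counted interior contributions cancel), not only the two boundary conditions you cite.
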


\begin{proof}
The proof is similar to duality arguments in \cite{QiSh:2014} (and related references). We give here a very systematic approach to help understand the logic of the argument. We first conjugate equations \eqref{eq:6.1} and then test them with the discrete errors $(\esig,\eu,\euhat)$:
\begin{subequations}\label{eq:6.2D}
\begin{alignat}{6}
\label{eq:6.2a}
\imath\kappa(\mathcal A\esig,\overline{\bs\psi})_\Th
	+(\nabla\cdot\esig,\PW\overline{\bs\phi})_\Th-\langle\esig\mathbf n,\overline{\bs\phi}\rangle_{\partial\Th} &=0,\\
\label{eq:6.2b}
-(\eu,\nabla\cdot\PV\overline{\bs\psi})_\Th+\langle \eu,(\PV\overline{\bs\psi}-\overline{\bs\psi})\mathbf n\rangle_{\partial\Th}
	+\imath\kappa (\rho\,\eu,\overline{\bs\phi})_\Th & = \| \eu\|_\Omega^2,\\
\label{eq:6.2c}	
\langle \euhat,\overline{\bs\psi}\mathbf n\rangle_{\partial\Th} &=0.
\end{alignat}
\end{subequations}
Note that to reach \eqref{eq:6.2a} and \eqref{eq:6.2b} we need to integrate by parts and introduce projections wherever possible. Also, \eqref{eq:6.2c} reflects the fact that $\bs\psi$ does not jump across interelement faces as well as the equality $\euhat=\mathbf 0$ on $\Gamma_D$. The second ingredient for the proof is the set of error equations \eqref{eq:5.7} tested with $(\PV\overline{\bs\psi},\PW\overline{\bs\phi},\PM\overline{\bs\phi})$, to yield
\begin{subequations}\label{eq:6.3}
\begin{alignat}{6}
\imath\kappa (\mathcal A \esig,\overline{\bs\psi})_\Th-(\eu,\nabla\cdot \PV\overline{\bs\psi})_\Th
	+\langle \euhat,\overline{\bs\psi}\mathbf n\rangle_{\partial\Th}
	&=\ell_1(\bs\psi),\\
(\nabla\cdot\esig,\PW\overline{\bs\phi})_\Th+\imath\kappa (\rho\eu,\overline{\bs\phi})_\Th
	&=\ell_2(\bs\phi),\\
-\langle \esig\mathbf n,\overline{\bs\phi}\rangle_{\partial\Th}
	&=\ell_3(\bs\phi),
\end{alignat}
\end{subequations}
where
\begin{eqnarray*}
\ell_1(\bs\psi)&:=&\imath\kappa \left(
	(\mathcal A\esig,\overline{\bs\psi}-\PV\overline{\bs\psi})_\Th
	+(\mathcal A \epsig,\PV\overline{\bs\psi})_\Th\right)
	+\langle\euhat,(\overline{\bs\psi}-\PV\overline{\bs\psi})\mathbf n\rangle_{\partial\Th},\\
\ell_2(\bs\phi) &:=& \imath\kappa\left(
	(\rho\eu,\overline{\bs\phi}-\PW\overline{\bs\phi})_\Th
	+(\rho\epu,\PW\overline{\bs\phi})_\Th\right)\\
	 & & +\langle \epsig\mathbf n,\PW\overline{\bs\phi}\rangle_{\partial\Th}
		+\langle\bs\tau\PM\epu,\PW\overline{\bs\phi}\rangle_{\partial\Th}
		 -\langle\bs\tau(\PM \eu-\euhat),\PW\overline{\bs\phi}\rangle_{\partial\Th} \\
\ell_3(\bs\phi) &:=& -\langle\epsig\mathbf n,\PM\overline{\bs\phi}\rangle_{\partial\Th}
		-\langle\bs\tau \PM\epu,\overline{\bs\phi}\rangle_{\partial\Th}
		+\langle \bs\tau(\PM\eu-\euhat),\overline{\bs\phi}\rangle_{\partial\Th}.
\end{eqnarray*}
Note that in \eqref{eq:6.3} we have kept in the left-hand side of the error equations only those terms that appear in the left-hand side of \eqref{eq:6.2D}. We have also eliminated some redundant projections and applied that $\bs\phi=\mathbf 0$ on $\Gamma_D$. The proof of the result is now straightforward: add equations \eqref{eq:6.2D} and substitute equations \eqref{eq:6.3} in the result.
\end{proof}

The next step in the proof of the error estimates is a bound for $\|\eu\|_\Omega$ obtained by carefully working on the right-hand side of the duality identity in Proposition \ref{prop:6.1}. To alleviate the proof from an excess of constants, we will use the convention that $a\lesssim b$, whenever there exists a positive constant $C$ independent of $h$ and $\kappa$ such that $a\le C \, b$. 

\begin{proposition}\label{prop:6.2}
If $h \kappa D_\kappa$ is small enough if $\bs\sigma\in H^t(\Omega;\C^{3\times 3})$ for $1\le t \le k+1$ and if $\mathbf u\in H^s(\Omega;\C^3)$ for $1\le s\le k+2$, then
\[
\| \eu\|_\Omega \lesssim  h (\kappa+1) D_\kappa 
\left( \| \esig\|_{\mathcal A}+\| \PM \eu-\euhat\|_\tau
	+ h^t |\bs\sigma|_{t,\Omega}
	+h^{s-1} |\mathbf u|_{s,\Omega}\right).
\]
\end{proposition}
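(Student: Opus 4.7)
My approach would be to take the identity of Proposition \ref{prop:6.1} as the starting point and bound each of its seven right-hand side terms separately. Throughout, the dual regularity bound \eqref{eq:6.2}, applied to the conjugated dual solution, furnishes
\[
\|\bs\phi\|_{1,\Omega} + \|\rho\bs\phi\|_{1,\Th} + \|\mathcal A\bs\psi\|_{1,\Omega} + \|\bs\psi\|_{1,\Th} + \kappa^{-1}\|\bs\phi\|_{2,\Omega} \lesssim D_\kappa\|\eu\|_\Omega,
\]
so every estimate will naturally carry a factor $D_\kappa\|\eu\|_\Omega$ that can be divided out at the end. The other essential tools are Cauchy--Schwarz, the approximation properties of the $L^2$ projections $\PV$, $\PW$, $\PM$, discrete trace inequalities, and the scaling \eqref{eq:tau} of $\bs\tau$.

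For the two $\mathcal A$-volume terms I would bound $\imath\kappa(\mathcal A\esig,\overline{\bs\psi}-\PV\overline{\bs\psi})_\Th$ directly via $\|\overline{\bs\psi}-\PV\overline{\bs\psi}\|_\Th\lesssim h\|\bs\psi\|_{1,\Th}$, obtaining $\lesssim h\kappa D_\kappa\|\esig\|_{\mathcal A}\|\eu\|_\Omega$. The term $\imath\kappa(\mathcal A\epsig,\PV\overline{\bs\psi})_\Th$ requires the standard duality trick of exploiting $\epsig\perp\Vh$ in $L^2$ by rewriting it as $(\epsig,\mathcal A\PV\overline{\bs\psi}-\PV(\mathcal A\PV\overline{\bs\psi}))_\Th$, which earns an extra $h$ from the $W^{1,\infty}(\Th)$ regularity of $\mathcal A$. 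Performing the entirely analogous manipulations on the two $\rho$-volume terms (using the $L^2$-orthogonality $\epu\perp\Wh$ and the $W^{1,\infty}(\Th)$ regularity of $\rho$), the four volume contributions together are bounded by $\lesssim h\kappa D_\kappa\big(\|\esig\|_{\mathcal A} + \|\eu\|_\Omega + h^t|\bs\sigma|_{t,\Omega} + h^{s}|\mathbf u|_{s,\Omega}\big)\|\eu\|_\Omega$.

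The three face terms are handled by the same arsenal used in Proposition \ref{prop:5.4}. For $\langle\eu-\euhat,(\overline{\bs\psi}-\PV\overline{\bs\psi})\mathbf n\rangle_{\partial\Th}$ I would split $\eu-\euhat = (\eu-\PM\eu)+(\PM\eu-\euhat)$: the second piece pairs naturally with the $\|\PM\eu-\euhat\|_\tau$ norm once \eqref{eq:tau} is used, while the first is controlled through \eqref{eq:5.10} exactly as in the proof of Proposition \ref{prop:5.4}. The remaining face pairings involving $\epsig\mathbf n$ against $\PW\overline{\bs\phi}-\PM\overline{\bs\phi}$ and $\bs\tau\PM\epu$ against $\overline{\bs\phi}-\PW\overline{\bs\phi}$ are routine trace/approximation estimates, and collectively the face contributions yield $\lesssim h D_\kappa\big(\|\esig\|_{\mathcal A}+\|\PM\eu-\euhat\|_\tau+h^t|\bs\sigma|_{t,\Omega}+h^{s-1}|\mathbf u|_{s,\Omega}\big)\|\eu\|_\Omega$.

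Adding all contributions gives
\[
\|\eu\|_\Omega^2 \lesssim h(\kappa+1)D_\kappa\|\eu\|_\Omega \cdot \big(\|\esig\|_{\mathcal A}+\|\PM\eu-\euhat\|_\tau+h^t|\bs\sigma|_{t,\Omega}+h^{s-1}|\mathbf u|_{s,\Omega}\big) + C\,h\kappa D_\kappa\|\eu\|_\Omega^2.
\]
The single delicate point, and the main obstacle I anticipate, is absorbing this last contaminating $\|\eu\|_\Omega^2$, which arises from the $(\rho\eu,\overline{\bs\phi}-\PW\overline{\bs\phi})_\Th$ estimate: this absorption is justified precisely by the assumed smallness of $h\kappa D_\kappa$, and dividing through by $\|\eu\|_\Omega$ then yields the stated bound.
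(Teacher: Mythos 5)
Your proposal follows essentially the same route as the paper's proof: bound the right-hand side of the duality identity term by term, gain the extra power of $h$ on the $\epsig$ and $\epu$ contributions by exploiting their $L^2$-orthogonality to the discrete spaces together with the piecewise $W^{1,\infty}$ regularity of $\mathcal A$ and $\rho$, reuse the estimates \eqref{eq:5.8a}, \eqref{eq:5.8b} and \eqref{eq:5.10} for the face terms, invoke the dual regularity bound \eqref{eq:6.2}, and finally absorb the quadratic $\|\eu\|_\Omega^2$ term using the smallness of $h\kappa D_\kappa$ before dividing by $\|\eu\|_\Omega$. The only cosmetic difference is that the paper first recombines the paired volume terms (e.g.\ into $(\mathcal A(\bs\sigma-\bs\sigma_h),\overline{\bs\psi}-\PV\overline{\bs\psi})_\Th+(\epsig,\mathcal A\overline{\bs\psi}-\av{\mathcal A\overline{\bs\psi}})_\Th$) before estimating, whereas you treat them separately; the resulting bounds are the same.
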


\begin{proof}
As already mentioned, we first estimate the right-hand side of the equality in Proposition \ref{prop:6.1}. Let $\av{f}$ be the best $L^2(\Omega)$ projection of $f$ on the space of piecewise constant functions, i.e., $\av{f}=\frac1{|K|}\int_K f$ in $K$ for every $K$. 

Notice that
\[
(\mathcal A\esig,\overline{\bs\psi}-\PV\overline{\bs\psi})_\Th
	+(\mathcal A\epsig,\PV\overline{\bs\psi})_\Th 
	= (\mathcal A(\bs\sigma-\bs\sigma_h),\overline{\bs\psi}-\PV\overline{\bs\psi})_\Th
		+ (\epsig,\mathcal A\overline{\bs\psi}-\av{\mathcal A\overline{\bs\psi}})_\Th,
\]
and then we can bound
\begin{equation}\label{eq:6.5}
|(\mathcal A\esig,\overline{\bs\psi}-\PV\overline{\bs\psi})_\Th 
	+(\mathcal A\epsig,\PV\overline{\bs\psi})_\Th |
	\lesssim h \|\bs\sigma-\bs\sigma_h\|_{\mathcal A}  |\bs\psi|_{1,\Th}
		+ h \|\epsig\|_\Omega  |\mathcal A \bs\psi|_{1,\Omega}.
\end{equation}
Similarly, the equality
\[
(\rho\,\eu,\overline{\bs\phi}-\PW\overline{\bs\phi})_\Th+(\rho\,\epu,\PW\overline{\bs\phi})_\Th
=(\rho(\mathbf u-\mathbf u_h),\overline{\bs\phi}-\PW\overline{\bs\phi})_\Th
	+(\epu,\rho\overline{\bs\phi}-\av{\rho\overline{\bs\phi}})_\Th
\]
can be used to estimate
\begin{equation}\label{eq:6.6}
|(\rho\,\eu,\overline{\bs\phi}-\PW\overline{\bs\phi})_\Th+(\rho\,\epu,\PW\overline{\bs\phi})_\Th|
\lesssim h \|\mathbf u-\mathbf u_h\|_\Omega | \bs\phi|_{1,\Omega}
	+ h \|\epu\|_\Omega |\rho\,\bs\phi|_{1,\Th}
\end{equation}
Using \eqref{eq:5.8a} with $\bs\psi$ in place of $\bs\sigma$ and $t=1$ and \eqref{eq:5.10}, we can estimate
\begin{equation}\label{eq:6.7}
|\langle \eu-\euhat,(\overline{\bs\psi}-\PV\overline{\bs\psi})\mathbf n\rangle_{\partial\Th}|
\lesssim h \left( \| \epsig\|_{\mathcal A}+\| \esig\|_{\mathcal A}+ \| \PM \eu-\euhat\|_\tau\right)
|\bs\psi|_{1,\Th}.
\end{equation}
Using \eqref{eq:5.8b} with $\bs\phi$ in place of $\mathbf u$ and $s=2$, we bound
\begin{equation}\label{eq:6.8}
|\langle\bs\tau(\PM \eu-\euhat),\overline{\bs\phi}-\PW\overline{\bs\phi}\rangle_{\partial\Th}|
\lesssim h \|\PM \eu-\euhat\|_\tau | \bs\phi|_{2,\Omega}.
\end{equation}
With a scaling argument and the bound \eqref{eq:tau} (stating the size of the stabilization paraneter), we can bound on every $K$
\begin{eqnarray*}
|\langle \bs\tau \PM (\mathbf u-\PW \mathbf u),\overline{\bs\phi}-\PW\overline{\bs\phi}\rangle_{\partial K}|
	&\lesssim & h_K^{-1} \| \mathbf u-\PW \mathbf u\|_{\partial K}
						\| \bs\phi-\PW\bs\phi\|_{\partial K}\\
	& \lesssim & h_K^s |\mathbf u|_{s,K}|\bs\phi|_{2,K}
\end{eqnarray*}
and therefore
\begin{equation}\label{eq:6.9}
|\langle \bs\tau\PM\epu,\overline{\bs\phi}-\PW\overline{\bs\phi}\rangle_{\partial\Th}|
\lesssim h^s |\mathbf u|_{s,\Omega} |\bs\phi|_{2,\Omega}.
\end{equation}
Similarly
\begin{equation}\label{eq:6.10}
|\langle \epsig\mathbf n,\PM\overline{\bs\phi}-\PW\overline{\bs\phi}\rangle_{\partial\Th}|
\lesssim h^{t+1} |\bs\sigma|_{t,\Omega} |\bs\phi|_{2,\Omega}.
\end{equation}
Collecting the estimates \eqref{eq:6.5}-\eqref{eq:6.10} to bound the right-hand side of the identity in Proposition \ref{prop:6.1}, and using the regularity bound \eqref{eq:6.2}, we can bound
\begin{align*}
\| \eu\|_\Omega^2
	\lesssim  \, & h \kappa D_\kappa \| \eu\|_\Omega 
	\left( \| \eu\|_\Omega + \|\esig\|_{\mathcal A} +\| \PM \eu -\euhat\|_\tau \right) \\
	& + h (1+\kappa ) D_\kappa\left(h^t |\bs\sigma|_{t,\Omega}  + h^{s-1} |\mathbf u|_{s,\Omega} \right)
\end{align*}
The proposition is now a simple consequence of the latter inequality.
\end{proof}

The proof of Theorem \ref{the:3.1} follows from the energy identity (Proposition \ref{prop:5.3}) and the estimates of Propositions \ref{prop:5.4} and \ref{prop:6.2} by a careful bootstrapping process.

\begin{proof}[Proof of Theorem \ref{the:3.1}]
To simplify the algebra involved in this final step, let us give symbols for the quantities we want to bound
\[
\Sigma:=\| \esig\|_{\mathcal A}, \qquad 
	\mathrm T:=\kappa^{-1/2}\| \PM \eu-\euhat\|_\tau, \qquad
	\mathrm U:=\| \eu\|_\Omega,
\]
and for the approximation terms
\[
\Sigma_h:= h^t |\bs\sigma|_{t,\Omega},\qquad
	\mathrm U_h:=h^{s-1} |\mathbf u|_{s,\Omega}.
\] 
With this shorthand, Propositions \ref{prop:5.3} and \ref{prop:5.4} yield
\begin{equation}\label{eq:6.11}
|\Sigma^2-\imath \mathrm T^2|
	\lesssim \Sigma\,\Sigma_h + \mathrm U\,\mathrm U_h 
	+ \kappa^{-1/2} (\Sigma_h+\mathrm U_h) \mathrm T +\Sigma_h^2+\mathrm U^2.
\end{equation}
If $\alpha:=h (1+\kappa) (1+\kappa C_\kappa)=D_\kappa h (1+\kappa)$, Proposition \ref{prop:6.2} can then be rephrased as
\begin{equation}\label{eq:6.12}
\mathrm U \lesssim \alpha (\Sigma+\Sigma_h+\mathrm U_h +\kappa^{1/2}\mathrm T).
\end{equation}
Substituting \eqref{eq:6.12} in the right-hand side of \eqref{eq:6.11}, and reordering terms, we have
\begin{eqnarray}\label{eq:6.13}
\nonumber
\Sigma^2+\mathrm T^2 
	& \lesssim &
	\alpha^2\Sigma^2+\Sigma(\Sigma_h+\alpha \mathrm U_h)\\
	&& +\alpha^2 \kappa \mathrm T^2
	+\mathrm T(\kappa^{-1/2}\mathrm U_h+\alpha \kappa^{1/2}\mathrm U_h
				+\kappa^{-1/2} \Sigma_h)\\
\nonumber
	& &
	+(1+ \alpha^2) \Sigma_h^2
	 + (\alpha+\alpha^2)\mathrm U_h^2.
\end{eqnarray}
Let now $C$ be the constant that is hidden in the symbol $\lesssim$, and let us assume that
\begin{equation}\label{eq:hypo}
C \alpha^2\le \tfrac14 \qquad \mbox{and} \qquad C \alpha^2\kappa\le \tfrac14.
\end{equation}
We now use Young's inequality $ab\le \frac14 a^2+ b^2$ in \eqref{eq:6.13} to get
\begin{eqnarray*}
\Sigma^2+\mathrm T^2  
		& \le & \tfrac12\Sigma^2+\tfrac12 \mathrm T^2 \\
		& & + C^2(\Sigma_h+\alpha \mathrm U_h)^2
			+C^2(\kappa^{-1/2}\mathrm U_h+\alpha \kappa^{1/2}\mathrm U_h+\kappa^{-1/2} \Sigma_h)^2\\
		& & +C(1+ \alpha^2) \Sigma_h^2
	 		+ C(\alpha+\alpha^2)\mathrm U_h^2.
\end{eqnarray*}
We can now simplify this expression using \eqref{eq:hypo} to obtain
$\Sigma^2+\mathrm T^2 \lesssim (1+\kappa^{-1})(\Sigma_h^2+\mathrm U_h^2)$, 
or equivalently
\begin{equation}\label{eq:6.14}
\Sigma+\mathrm T 
	\lesssim 
	(1+\kappa^{-1/2})(\Sigma_h+\mathrm U_h).		
\end{equation}
Using \eqref{eq:6.14} in \eqref{eq:6.12}, we can finally prove that
\begin{equation}\label{eq:6.15}
\mathrm U \lesssim \alpha(1+\kappa^{1/2}+\kappa^{-1/2})(\Sigma_h+\mathrm U_h).
\end{equation}
This finishes the proof.
\end{proof}

\section{Variants and insights}\label{sec:6}

\paragraph{Matrix form.}
We first give a matrix representation of the method of Section \ref{sec:3}. Equations \eqref{eq:2.5c} and \eqref{eq:2.5d} suggest the following orthogonal decomposition $\Mh=\Mhh{nD}\oplus \Mhh{D}$, where $\Mhh{nD}=\{\bs\mu\,:\,\bs\mu|_{\Gamma_D}=0\}\equiv \{ \bs\mu|_{\partial\Th\setminus\Gamma_D}\,:\,\bs\mu\in \Mh\}$. 	
We now take real-valued bases for the spaces $\Vh$, $\Wh$, $\Mhh{nD}$, and $\Mhh{D}$ and identify the unknowns $\bs\sigma_h\in\Vh$, $\bff u_h\in\Wh$, $\widehat{\bff u}_h|_{\partial\Th\setminus\Gamma_D}\in \Mhh{nD}$, and $\widehat{\bff u}_h|_{\Gamma_D}\in \Mhh{D}$, with respective complex column vectors $\underline\sigma$, $\underline u$, $\underline{\widehat u}^{{nD}}$, and $\underline{\widehat u}^{{D}}$. We then consider {\it real matrices} associated to the following {\it bilinear forms} that are understood as functionals acting on the unknowns:
\begin{alignat*}{6}
(\mathcal A \bs\sigma_h,\bs\xi)_\Th& \qquad  & \bs\xi\in \Vh & \qquad & & \mathrm A \underline\sigma, \\
(\nabla\cdot\bs\sigma_h,\bff w)_\Th & & \bff w\in \Wh & & & \mathrm D\underline\sigma, \\
\langle\bs\sigma_h\bff n,\bs\mu\rangle_{\partial\Th\setminus\Gamma_D} & & \bs\mu\in \Mhh{nD} 
		& & &\mathrm N \underline\sigma,\\
\langle\bs\sigma_h\bff n,\bs\mu\rangle_{\Gamma_D} & & \bs\mu\in \Mhh{D} 
		& & &\mathrm N_{{D}} \underline\sigma,\\
(\rho\bff u_h,\bff w)_\Th & & \bff w\in \Wh & & & \mathrm M\underline u,\\
\langle \bs\tau\PM \bff u_h,\PM\bff w\rangle_{\Th} & & \bff w\in \Wh & & & \mathrm T_{11} \underline u,\\
\langle\bs\tau\widehat{\bff u}_h,\bff w_h\rangle_{\partial\Th\setminus\Gamma_D} & & \bff w\in \Wh 
		& & & \mathrm T_{12}\underline u^{{nD}},\\
\langle\bs\tau\widehat{\bff u}_h,\bff w_h\rangle_{\Gamma_D} & & \bff w\in \Wh 
		& & & \mathrm T_{D}\underline u^{{D}},\\
\langle\bs\tau\widehat{\bff u}_h,\bs\mu\rangle_{\partial\Th\setminus\Gamma_D} & & \bs\mu\in \Mhh{nD} 
		& & & \mathrm T_{22}\underline u^{{nD}},\\
\langle\widehat{\bff u}_h,\bs\mu\rangle_{\Gamma_D} & &\bs\mu\in \Mhh{D}
		& & & \mathrm M_{{D}} \underline{\widehat u}^{{D}}.
\end{alignat*}Note that the matrices $\mathrm A$, $\mathrm M$, and $\mathrm T_{22}$ are symmetric and positive definite, while $\mathrm T_{11}$ is symmetric and positive semidefinite.  
The method given by equations \eqref{eq:2.5} is then equivalent to the linear system
\begin{equation}\label{eq:7.0}
\left[\begin{array}{cccc}
	\imath\kappa \mathrm A & -\mathrm D^\top & \mathrm N^\top & \mathrm N_{{D}}^\top \\
	\mathrm D & \imath\kappa\mathrm M+\mathrm T_{11} & -\mathrm T_{12} & -\mathrm T_{D} \\
	-\mathrm N & -\mathrm T_{12}^\top & \mathrm T_{22} & \mathrm O \\
	\mathrm O & \mathrm O &\mathrm O & \mathrm M_{{D}}
\end{array}\right]
\left[\begin{array}{l} 
	\underline\sigma \\ \underline u\\ \underline{\widehat u}^{{nD}} \\ \underline{\widehat u}^{{D}}
\end{array}\right]
=
\left[\begin{array}{c}
	\underline 0 \\ \underline f \\ -\underline g_N \\ \underline g_D
\end{array}\right],
\end{equation}
where the definition of the vectors $\underline f$, $\underline g_N$ and $\underline g_D$ is self-evident. 
Equations \eqref{eq:7.0} are equivalent to the following system
\[
\left[\begin{array}{cccc}
	\mathrm A & \mathrm D^\top & -\mathrm N^\top & -\mathrm N_{{D}}^\top \\
	-\mathrm D & -\kappa^2\mathrm M-\imath\kappa\mathrm T_{11} & 
			\imath\kappa\mathrm T_{12} & \imath\kappa\mathrm T_{D} \\
	\mathrm N & -\imath\kappa\mathrm T_{12}^\top & \imath\kappa\mathrm T_{22} & \mathrm O \\
	\mathrm O & \mathrm O &\mathrm O & \mathrm M_{{D}}
\end{array}\right]
\left[\begin{array}{l} 
	\widetilde{\underline\sigma} \\ \underline u\\ \underline{\widehat u}^{{nD}} \\ \underline{\widehat u}^{{D}}
\end{array}\right]
=
\left[\begin{array}{c}
	\underline 0 \\ \widetilde{\underline f} \\ \widetilde{\underline g}_N \\ \underline g_D
\end{array}\right],
\]
where $\widetilde{\underline\sigma}=-\imath\kappa\underline\sigma$, $\widetilde{\underline f}=-\imath\kappa \underline f$, and $\widetilde{\underline g}_N=-\imath\kappa\underline g_N$. This change of variables in the first unknown and in the right-hand side reverts the system to the original physical variables (the ones with a tilde in Section \ref{sec:2}) so that the equations are second-order-in-frequency. It is clear how the stabilization terms are the only complex-valued ones in the system.

\paragraph{Hybridization.} The four matrices in the upper left $2\times 2$ block of the matrix of \eqref{eq:7.0} are elementwise block diagonal. The hybridization process consists of solving the system
\begin{alignat*}{6}
\left( \left[\begin{array}{cc} \mathrm N & \mathrm T_{12}^\top \end{array}\right] \mathrm C^{-1} 
\left[\begin{array}{c} \mathrm N^\top \\ - \mathrm T_{12} \end{array}\right]+\mathrm T_{22} \right) \underline{\widehat u}^{{nD}}
 = & -\underline g_N + 
 \left[\begin{array}{cc} \mathrm N & \mathrm T_{12}^\top \end{array}\right] \mathrm C^{-1} 
\left[\begin{array}{c} \underline 0 \\ \underline f \end{array}\right] \\
 & - \left[\begin{array}{cc} \mathrm N & \mathrm T_{12}^\top \end{array}\right] \mathrm C^{-1} 
\left[\begin{array}{c} \mathrm N_D^\top \\ - \mathrm T_D \end{array}\right]\mathrm M_D^{-1} \underline g_D,
\end{alignat*}
where the invertibility of
\[
\mathrm C:= 
	\left[\begin{array}{cc} 
		\imath \kappa \mathrm A & -\mathrm D^\top \\
		\mathrm D & \imath \kappa\mathrm M+\mathrm T_{11}
	\end{array}\right]			
\]
was the object of Proposition \ref{prop:5.2}.

\paragraph{Variant \# 1: time reversal.}
While we are not making any claims about the behavior of the method for high frequency problems, we have kept $\kappa$ visible everywhere. We next explore some variants of the method that can be obtained by changing to second-order-in-frequency form and exploring different choices of the stabilization parameter.
The energy identity of Proposition \ref{prop:5.3} is the trigger for the analysis of the method. It is there clear that the sign of the boundary term is not relevant and a method based on the numerical flux
\[
\widehat{\bs\sigma}_h\mathbf n:=\bs\sigma_h\mathbf n-\bs\tau_K (\PM \mathbf u_h-\widehat{\mathbf u}_h) 
\,:\, \partial K\to \C^3
\]
has the same convergence properties as the method presented in Section \ref{sec:2}. As we will see later in this section, this method corresponds to time reversal.

\paragraph{Variant \# 2: $\kappa$-scaled stabilization.}
The factor $\kappa^{-1/2}$ in the error estimate of Theorem \ref{the:3.1} suggests the following variant of the numerical method: we still use equations \eqref{eq:2.5} by change the definition of the numerical flux to be
\begin{equation}\label{eq:7.1}
\widehat{\bs\sigma}_h\mathbf n:=\bs\sigma_h\mathbf n+\kappa \bs\tau_K (\PM \mathbf u_h-\widehat{\mathbf u}_h) 
\,:\, \partial K\to \C^3.
\end{equation}
(Note that, as shown in \cite{GoLaNiPe:2015} for the acoustic wave equation, making the stabilization parameter depend on $\kappa$ is a must when we want to deal with complex frequencies. This dependence has also some desirable properties.) 
The proof of Theorem \ref{the:3.1} can be easily adapted to deal with the method whose stabilization term is given by \eqref{eq:7.1}. The error estimate is given in the following theorem.

\begin{theorem}
There exist $C_1, C_2>0$, dependent only on the shape-regularity of $\Th$, the density $\rho$ and the coefficients of the inverse compliance tensor $\mathcal A$ such that
if $h (1+\kappa)^{3/2}(1+\kappa C_\kappa)$ is small enough, then the errors can be bounded by
\[
\| \esig\|_{\mathcal A}+ \| \PM \eu-\euhat\|_\tau
	\le C_1 \big((1+\kappa^{-1})  h^t |\bs\sigma|_{t,\Omega}+h^{s-1} |\mathbf u|_{s,\Omega}\big)
\]
and
\[
\| \eu\|_\Omega\le C_2 (1+\kappa C_\kappa) (1+\kappa) 
\big((\kappa+\kappa^{-1}) h^{t+1} |\bs\sigma|_{t,\Omega}+h^{s} (1+\kappa) |\mathbf u|_{s,\Omega}\big),
\]
if $k\ge 1$, $\mathbf u\in H^s(\Omega;\C^3)$ with $1\le s\le k+2$, and $\bs\sigma\in H^t(\Omega;\C^{3\times 3})$ with $1\le t\le k+1$.
\end{theorem}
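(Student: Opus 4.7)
My plan is to repeat each step of the proof of Theorem~\ref{the:3.1}, substituting $\bs\tau\leftarrow\kappa\bs\tau$ in every stabilization term, and keeping the abbreviations
\[
\Sigma:=\|\esig\|_{\mathcal A},\quad \mathrm T:=\|\PM\eu-\euhat\|_\tau,\quad \mathrm U:=\|\eu\|_\Omega,\quad \Sigma_h:=h^t|\bs\sigma|_{t,\Omega},\quad \mathrm U_h:=h^{s-1}|\mathbf u|_{s,\Omega}.
\]
The error equations are those of \eqref{eq:5.7} with every $\bs\tau$ replaced by $\kappa\bs\tau$, so the remainder of the analysis needs only cosmetic updates.

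Rerunning Proposition~\ref{prop:5.3} yields the energy identity with coercive quantity $\kappa\mathrm T^2$ in place of $\mathrm T^2$:
\[
i\kappa(\Sigma^2-\|\eu\|_\rho^2)+\kappa\mathrm T^2=i\kappa\big[(\mathcal A\epsig,\overline\esig)_\Th-(\rho\overline\epu,\eu)_\Th\big]+\langle\overline\epsig\mathbf n,\eu-\euhat\rangle_{\partial\Th}+\kappa\langle\bs\tau\overline\epu,\PM\eu-\euhat\rangle_{\partial\Th}.
\]
In Proposition~\ref{prop:5.4}, the estimate \eqref{eq:5.8a} and the auxiliary \eqref{eq:5.10} are insensitive to a scalar multiplier of $\bs\tau$ (their derivations use only its $h_K^{-1}$-scaling), while \eqref{eq:5.8b} acquires a factor of $\kappa$. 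Taking moduli of the identity then gives
\[
\kappa\mathrm T^2+\kappa\,|\Sigma^2-\|\eu\|_\rho^2|\lesssim \kappa(\Sigma\Sigma_h+\mathrm U\mathrm U_h)+(\Sigma_h+\kappa\mathrm U_h)\mathrm T+\kappa\Sigma_h(\Sigma+\Sigma_h).
\]

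The duality step is where the modifications matter most. The adjoint problem \eqref{eq:6.1} and the regularity bound \eqref{eq:6.2} are unaltered. In the duality identity of Proposition~\ref{prop:6.1} only the fourth line is affected: both $\bs\tau$'s become $\kappa\bs\tau$, and when those terms are bounded via the analogs of \eqref{eq:6.8}--\eqref{eq:6.9} together with $|\bs\phi|_{2,\Omega}\le\kappa D_\kappa\mathrm U$, they contribute $\kappa^2 hD_\kappa(\mathrm T+\mathrm U_h)\mathrm U$, the characteristic new feature of the variant. After absorbing the $\kappa hD_\kappa\mathrm U^2$ contribution on the left (permitted by the smallness hypothesis on $\kappa hD_\kappa$), the Proposition~\ref{prop:6.2} analog takes the form
\[
\mathrm U\lesssim hD_\kappa(1+\kappa)(\Sigma+\Sigma_h)+hD_\kappa(1+\kappa)^2(\mathrm T+\mathrm U_h).
\]

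The bootstrap then closes as in the proof of Theorem~\ref{the:3.1}. Setting $\alpha:=h(1+\kappa)D_\kappa$, squaring the $\mathrm U$-bound, inserting it into $\kappa(\Sigma^2+\mathrm T^2)\lesssim \kappa\mathrm U^2+|\mathrm{RHS}|$, dividing by $\kappa$ and handling the cross terms with Young's inequality yields
\[
\Sigma^2+\mathrm T^2\lesssim \kappa\alpha^2\,\Sigma^2+\kappa\alpha^2(1+\kappa)^2\,\mathrm T^2+(\text{pure approximation in }\Sigma_h\text{ and }\mathrm U_h).
\]
The stated smallness hypothesis on $h(1+\kappa)^{3/2}(1+\kappa C_\kappa)$ is precisely what is needed to absorb the $\Sigma^2$ and $\mathrm T^2$ terms on the right into the left. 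What remains gives the first claimed estimate on $\Sigma+\mathrm T$; substituting it back into the $\mathrm U$-bound yields the second, with the $(1+\kappa^{-1})$ factor on the $\bs\sigma$-approximation and the $\kappa+\kappa^{-1}$ and $1+\kappa$ factors in the $\mathrm U$-estimate emerging from the way the Young splittings are organized. The main obstacle is the bookkeeping itself: the extra $(1+\kappa)$ that $\mathrm T$ picks up in the duality bound because of the $\kappa$-scaled stabilization must be shepherded through every step of the bootstrap so that the stated smallness hypothesis remains sufficient and the $\kappa$-powers in the final estimates come out exactly as advertised.
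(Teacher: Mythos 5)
Your overall strategy --- rerun the proof of Theorem~\ref{the:3.1} with $\bs\tau$ replaced by $\kappa\bs\tau$ --- is exactly what the paper intends (it offers no written proof of this variant, only the remark that the argument of Theorem~\ref{the:3.1} ``can be easily adapted''), and your identification of which intermediate estimates change is correct: the energy identity acquires $\kappa\|\PM\eu-\euhat\|_\tau^2$ on the left and a factor $\kappa$ on the \eqref{eq:5.8b}-type term, while \eqref{eq:5.8a} and \eqref{eq:5.10} are unaffected; in the duality identity only the penalty terms pick up a $\kappa$, and combined with $|\bs\phi|_{2,\Omega}\le\kappa D_\kappa\|\eu\|_\Omega$ they produce the new $\kappa^2 h D_\kappa(\mathrm T+\mathrm U_h)\mathrm U$ contribution that you correctly single out.

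The gap is in the final bootstrap, which you do not actually carry out. First, your displayed absorption inequality carries a spurious factor of $\kappa$: after dividing $\kappa(\Sigma^2+\mathrm T^2)\lesssim\kappa\mathrm U^2+|\mathrm{RHS}|$ by $\kappa$, the term $\mathrm U^2$ enters with coefficient $1$, so squaring your $\mathrm U$-bound gives $\Sigma^2+\mathrm T^2\lesssim\alpha^2\Sigma^2+\alpha^2(1+\kappa)^2\mathrm T^2+\cdots$, not $\kappa\alpha^2\Sigma^2+\kappa\alpha^2(1+\kappa)^2\mathrm T^2+\cdots$. Second, and more importantly, even the corrected absorption condition $C\alpha^2(1+\kappa)^2\le\tfrac14$ with $\alpha=h(1+\kappa)D_\kappa$ amounts to requiring that $h(1+\kappa)^2D_\kappa$ be small, which for large $\kappa$ is strictly stronger than the stated hypothesis that $h(1+\kappa)^{3/2}(1+\kappa C_\kappa)$ be small; your claim that the stated hypothesis ``is precisely what is needed'' is therefore not delivered by your own estimates. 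The culprit is the coefficient $\alpha(1+\kappa)=h(1+\kappa)^2D_\kappa$ multiplying $\mathrm T$ in your $\mathrm U$-bound, which comes from estimating $\kappa\langle\bs\tau(\PM\eu-\euhat),\overline{\bs\phi}-\PW\overline{\bs\phi}\rangle_{\partial\Th}$ by $\kappa\cdot h\,\mathrm T\cdot\kappa D_\kappa\mathrm U$. To close the argument you must either sharpen this single estimate (for instance by splitting $\|\bs\phi-\PW\bs\phi\|_{\partial K}$ between $|\bs\phi|_{1,K}$ and $|\bs\phi|_{2,K}$ so that a lower power of $\kappa$ appears at the price of a lower power of $h$) or concede a stronger smallness condition than the one in the statement. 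As written, the most delicate part of the proof --- precisely the bookkeeping you flag at the end as ``the main obstacle'' --- is asserted rather than verified.
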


\paragraph{Second-order-in-frequency formulations.}
Since all methods presented above are based in a first-order-in-frequence formulation, defining $\bs\sigma:=(\imath/\kappa)\widetilde{\bs\sigma}$, where $\widetilde{\bs\sigma}$ is the physical stress for the displacement field $\bff u$. Consider now the following family of HDG schemes based on a second-order-in-frequency formulation: the spaces are unchanged and $\alpha_\kappa$ is a fixed parameter that is allowed to depend on the frequency:
\begin{subequations}\label{eq:7.2}
\begin{alignat}{6}
 (\mathcal A\widetilde{\bs\sigma}_h,\bs\xi)_{\Th}
	+(\mathbf u_h,\nabla\cdot\bs\xi)_\Th-\langle\widehat{\mathbf u}_h,\bs\xi\mathbf n\rangle_{\partial\Th}
	 & = 0 & \qquad & \forall \bs\xi\in \Vh,\\
(\widetilde{\bs\sigma}_h,\nabla\mathbf w)_{\Th}
	- \langle \widehat{\bs\sigma}_h\bff n,\mathbf w\rangle_{\bs\tau}
	-\kappa^2 (\rho\,\mathbf u_h,\mathbf w)_\Th &=-(\widetilde{\mathbf f},\mathbf w)_\Th
		& & \forall \mathbf w\in \Wh,\\
\langle\widehat{\bs\sigma}_h\bff n,\bs\mu\rangle_{\partial\Th\setminus\Gamma_D}
	&=\langle\widetilde{\mathbf g}_N,\bs\mu\rangle_{\Gamma_N} & & \forall \bs\mu \in \Mh,\\
\langle\widehat{\mathbf u}_h,\bs\mu\rangle_{\Gamma_D} &=\langle\mathbf g_D,\bs\mu\rangle_{\Gamma_D}
	& &\forall \bs\mu\in \Mh,
\end{alignat}
where
\begin{equation}
\widehat{\bs\sigma}_h\bff n:=\widetilde{\bs\sigma}_h\bff n-\alpha_\kappa \bs\tau(\PM\bff u_h-\widehat{\bff u}_h).
\end{equation}
\end{subequations}
Note that the equations are written in terms of the original data in \eqref{eq:2.1}.
The choice $\alpha_\kappa=1$ is the direct application of the method in \cite{QiSh:2014} to the equation $\nabla\cdot\widetilde{\bs\sigma}+\kappa^2\rho\bff u=\widetilde{\bff f}$. This choice of the parameter $\alpha_\kappa$ yields a method that transitions smoothly (analytically) to the zero-frequency limit.
Methods based on the first-order-in-frequency formulation can be rewritten in the form \eqref{eq:7.2} with the relation $\widetilde{\bs\sigma}_h=-\imath\kappa\bs\sigma_h$ and the parameter $\alpha_\kappa:=\imath\kappa$ (for the method of Section \ref{sec:3}), $\alpha_\kappa:=-\imath\kappa$ (time reversed method) or $\alpha_\kappa=\imath\kappa^2$ (for the method with the flux defined in \eqref{eq:7.1}. 

\paragraph{Variant \# 3: conservative method.}
The error estimate for the method in \eqref{eq:7.2} with the choice $\alpha_\kappa=1$ is given in the next theorem. Not surprisingly the estimates hold as $\kappa\to 0$, since we end up with a smooth perturbation of the discretization of the steady-state equations. Note that when $\kappa\to 0$ and we are not dealing with the pure Neumann problem, the quantity $C_\kappa$ converges to a finite value. Later in this section we will see that this choice corresponds to a conservative method in the time domain.

\begin{theorem}\label{the:7.2}
There exist $C_1, C_2>0$, dependent only on the shape-regularity of $\Th$, the density $\rho$ and the coefficients of the inverse compliance tensor $\mathcal A$ such that
if $h (1+\kappa) E_\kappa$ is small enough, then the errors can be bounded by
\[
\| \PV\widetilde{\bs\sigma}-\widetilde{\bs\sigma}_h \|_{\mathcal A}+ \| \PM \eu-\euhat\|_\tau
	\le C_1 \big(  h^t |\widetilde{\bs\sigma}|_{t,\Omega}+(1+\kappa) h^{s-1} |\mathbf u|_{s,\Omega}\big)
\]
and
\[
\| \eu\|_\Omega\le C_2 E_\kappa (1+\kappa)
\big( h^{t+1} |\widetilde{\bs\sigma}|_{t,\Omega}+h^{s} (1+\kappa) |\mathbf u|_{s,\Omega}\big),
\]
if $k\ge 1$, $\mathbf u\in H^s(\Omega;\C^3)$ with $1\le s\le k+2$, and $\widetilde{\bs\sigma}\in H^t(\Omega;\C^{3\times 3})$ with $1\le t\le k+1$. Here $E_\kappa\le C (1+\kappa C_\kappa+ C_\kappa+\kappa C_\kappa^{1/2})$. 
\end{theorem}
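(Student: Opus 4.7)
The strategy is to mirror the three-step blueprint of the proof of Theorem~\ref{the:3.1} --- energy identity, dual estimate, bootstrap --- adapted to the second-order-in-frequency system \eqref{eq:7.2} with $\alpha_\kappa=1$. The essential simplification is that the coefficient matrix of \eqref{eq:7.2} is \emph{real}, so the stabilization no longer contributes an imaginary term to the test identity; instead we inherit a G\aa rding-type structure with the indefinite term $-\kappa^2\|\eu\|_\rho^2$ coming from the lower-order part of the underlying second-order PDE.

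First I would derive the error equations for $(\PV\widetilde{\bs\sigma}-\widetilde{\bs\sigma}_h,\eu,\euhat)$ by substituting the orthogonal projections of the exact solution into \eqref{eq:7.2} and subtracting. These have the structure of \eqref{eq:5.7}, with $\imath\kappa\mathcal A$ and $\imath\kappa\rho$ replaced by $\mathcal A$ and $-\kappa^2\rho$, and with some signs adjusted to match \eqref{eq:7.2}. Following the testing scheme of Proposition~\ref{prop:5.3}, summing the appropriately conjugated and tested equations yields the real identity
\[
\|\PV\widetilde{\bs\sigma}-\widetilde{\bs\sigma}_h\|_{\mathcal A}^2 -\kappa^2\|\eu\|_\rho^2 +\|\PM\eu-\euhat\|_\tau^2 =\mathcal R,
\]
where $\mathcal R$ collects the projection residuals in analogy with Proposition~\ref{prop:5.3} but stripped of the $\imath\kappa$ prefactors. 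The bound on $\mathcal R$ follows Proposition~\ref{prop:5.4} almost verbatim; the analog of \eqref{eq:5.10}, obtained by testing the first error equation with $\bs\xi=\bs\varepsilon(\overline{\eu})$ on each element, now reads
\[
\|\bs\varepsilon(\eu)\|_\Th \lesssim \|\PV\widetilde{\bs\sigma}-\widetilde{\bs\sigma}\|_{\mathcal A} +\|\PV\widetilde{\bs\sigma}-\widetilde{\bs\sigma}_h\|_{\mathcal A} +\|\PM\eu-\euhat\|_\tau,
\]
\emph{without} the $\kappa$ factor present in \eqref{eq:5.10}, which is exactly what keeps the final bounds meaningful as $\kappa\to 0$.

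Next I would set up the adjoint problem
\[
\mathcal A\bs\psi+\bs\varepsilon(\bs\phi)=\bff 0, \qquad -\nabla\cdot\bs\psi-\kappa^2\rho\bs\phi=-\eu \quad\mbox{in $\Omega$,}
\]
with $\bs\phi=\bff 0$ on $\Gamma_D$ and $\bs\psi\bff n=\bff 0$ on $\Gamma_N$. Bound \eqref{eq:3.4} supplies $\|\bs\phi\|_{1,\Omega}\le C_\kappa\|\eu\|_\Omega$, while writing the elliptic equation as $\nabla\cdot(\mathcal C\bs\varepsilon(\bs\phi))=-\eu-\kappa^2\rho\bs\phi$ and applying \eqref{eq:3.2} gives $\|\bs\phi\|_{2,\Omega}\lesssim\|\eu\|_\Omega+\kappa^2\|\bs\phi\|_{0,\Omega}$. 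Feeding the first bound into the second and using an interpolation inequality of the form $\|\bs\phi\|_{1,\Omega}\le\|\bs\phi\|_{0,\Omega}^{1/2}\|\bs\phi\|_{2,\Omega}^{1/2}$ delivers the composite regularity constant $E_\kappa\lesssim 1+C_\kappa+\kappa C_\kappa+\kappa C_\kappa^{1/2}$ controlling each of the quantities $|\bs\phi|_{1,\Omega}$, $|\rho\bs\phi|_{1,\Th}$, $|\bs\psi|_{1,\Th}=|\mathcal C\bs\varepsilon(\bs\phi)|_{1,\Th}$, $|\mathcal A\bs\psi|_{1,\Omega}=|\bs\varepsilon(\bs\phi)|_{1,\Omega}$, and $\kappa^{-1}|\bs\phi|_{2,\Omega}$. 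The duality identity is obtained exactly as in Proposition~\ref{prop:6.1} --- test the conjugated adjoint with $(\PV\widetilde{\bs\sigma}-\widetilde{\bs\sigma}_h,\eu,\euhat)$ and the error equations with $(\PV\overline{\bs\psi},\PW\overline{\bs\phi},\PM\overline{\bs\phi})$ and combine --- and estimating each term as in \eqref{eq:6.5}--\eqref{eq:6.10} yields
\[
\|\eu\|_\Omega \lesssim h(1+\kappa)E_\kappa\Big( \|\PV\widetilde{\bs\sigma}-\widetilde{\bs\sigma}_h\|_{\mathcal A} +\|\PM\eu-\euhat\|_\tau +h^t|\widetilde{\bs\sigma}|_{t,\Omega} +(1+\kappa)h^{s-1}|\bff u|_{s,\Omega}\Big).
\]

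Finally I would bootstrap exactly as in the passage from \eqref{eq:6.13} to \eqref{eq:6.15}: insert the duality bound into $\kappa^2\|\eu\|_\rho^2+|\mathcal R|$, split mixed products via Young's inequality, and use the smallness hypothesis $h(1+\kappa)E_\kappa\ll 1$ to absorb the resulting stress and skeleton contributions into the left-hand side of the energy identity. This yields the first claimed estimate; reinserting it into the duality bound yields the second. The main obstacle will be the duality step: unlike in the first-order situation, the indefinite term $\kappa^2\rho\bs\phi$ forces the regularity bounds to mix the coercive estimate \eqref{eq:3.2} with the non-coercive estimate \eqref{eq:3.4} across different Sobolev scales, and producing the precise constant $E_\kappa$ in the statement requires the interpolation step above rather than a direct application of either estimate in isolation. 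A secondary bookkeeping difficulty is tracking the $\kappa$-weights in the approximation terms on the right-hand side of the duality inequality carefully enough that the final estimates remain uniform in the limit $\kappa\to 0$.
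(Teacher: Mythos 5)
Your overall architecture --- a real energy identity with the indefinite term $-\kappa^2\|\eu\|_\rho^2$, the $\kappa$-free analogue of \eqref{eq:5.10}, a duality identity against the second-order adjoint, and the same bootstrap --- is exactly the route the paper follows in Section \ref{sec:A}. Two specific steps, however, would not go through as written. First, the adjoint problem must carry the sign $-\nabla\cdot\bs\psi+\kappa^2\rho\bs\phi=-\eu$, since the operator $\nabla\cdot\mathcal C\bs\varepsilon(\cdot)+\kappa^2\rho$ is formally self-adjoint. With your sign $-\kappa^2\rho\bs\phi$, the eliminated problem for $\bs\phi$ is the coercive one, so \eqref{eq:3.4} is not the relevant estimate, and --- more seriously --- the terms $\kappa^2(\rho\,\eu,\overline{\bs\phi})_\Th$ produced by testing the adjoint equation with $\eu$ and by testing the error equation with $\PW\overline{\bs\phi}$ no longer cancel in the duality identity, leaving a remainder of size $\kappa^2\|\eu\|_\Omega\|\bs\phi\|_\Omega$ that is not small in $h$. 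This is presumably a transcription slip, since you immediately invoke \eqref{eq:3.4}, but the argument depends on it.

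The second issue is substantive: the interpolation inequality $\|\bs\phi\|_{1,\Omega}\le\|\bs\phi\|_{0,\Omega}^{1/2}\|\bs\phi\|_{2,\Omega}^{1/2}$ cannot deliver the constant $E_\kappa$. After rewriting the adjoint as $\nabla\cdot(\mathcal C\bs\varepsilon(\bs\phi))-\rho\bs\phi=-\eu-(1+\kappa^2)\rho\bs\phi$ and applying \eqref{eq:3.2}, what must be controlled is $\kappa^2\|\bs\phi\|_{0,\Omega}$; interpolation only bounds the intermediate norm from above and tells you nothing about $\|\bs\phi\|_{0,\Omega}$ beyond $\|\bs\phi\|_{0,\Omega}\le\|\bs\phi\|_{1,\Omega}\le C_\kappa\|\eu\|_\Omega$, which would give the worse constant $\kappa^2C_\kappa$. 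The device actually used (as in the proof of \eqref{eq:6.2}) is the variational identity obtained by testing the adjoint equation with $\overline{\bs\phi}$,
\[
\kappa^2\|\bs\phi\|_\rho^2=(\mathcal C\bs\varepsilon(\bs\phi),\bs\varepsilon(\overline{\bs\phi}))_\Omega-(\eu,\overline{\bs\phi})_\Omega,
\]
which yields $\kappa\|\bs\phi\|_\Omega\lesssim(C_\kappa+C_\kappa^{1/2})\|\eu\|_\Omega$ and hence $\kappa^2\|\bs\phi\|_\Omega\lesssim\kappa(C_\kappa+C_\kappa^{1/2})\|\eu\|_\Omega$; this is precisely where the term $\kappa C_\kappa^{1/2}$ in $E_\kappa$ originates. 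With this replacement --- and with the correctly scaled regularity package $\kappa(\|\bs\phi\|_{1,\Omega}+\|\rho\bs\phi\|_{1,\Th})+\|\mathcal A\bs\psi\|_{1,\Omega}+\|\bs\psi\|_{1,\Th}+\|\bs\phi\|_{2,\Omega}\le E_\kappa\|\eu\|_\Omega$, where the weight $\kappa$ sits on the $H^1$ terms rather than $\kappa^{-1}$ on $\|\bs\phi\|_{2,\Omega}$ as in \eqref{eq:6.2} --- the remainder of your argument coincides with the paper's proof.
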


\begin{proof} See Section \ref{sec:A}. \end{proof}

\paragraph{Methods in the time domain.} Let us now write some HDG-semidiscrete methods for the transient elastic wave equation. The data functions are $\widetilde{\bff f}:[0,\infty)\to L^2(\Omega;\R^3)$, $\bff g_D:[0,\infty)\to H^{1/2}(\Gamma_D;\R^3)$, and $\widetilde{\bff g}_N:[0,\infty)\to L^2(\Gamma_N;\R^3)$. We then look for $\bff u:[0,\infty)\to H^1(\Omega;\R^3)$ and $\bs\sigma:[0,\infty)\to H(\mathrm{div},\Omega;\R^{3\times 3}_{\mathrm{sym}})$ satisfying
\begin{subequations}\label{eq:6.4}
\begin{alignat}{6}
\mathcal A \widetilde{\bs\sigma}(t)-\bs\varepsilon (\mathbf u)(t)&=\bs 0&\qquad &\mbox{in $\Omega$}, \quad\forall t\ge 0,\\
\nabla\cdot\widetilde{\bs\sigma} -\rho\,\ddot{\mathbf u}(t)
	 &= \widetilde{\mathbf f}(t) &\qquad &\mbox{in $\Omega$}, \quad\forall t\ge 0,\\
\mathbf u(t) &=\mathbf g_D(t) & & \mbox{on $\Gamma_D$}, \quad\forall t\ge 0,\\
\widetilde{\bs\sigma}(t)\,\mathbf n &=\widetilde{\mathbf g}_N(t) & & \mbox{on $\Gamma_N$}, \quad\forall t\ge 0,
\end{alignat}
\end{subequations}
and initial conditions $\bff u(0)=\bff u_0$, $\dot{\bff u}(0)=\bff v_0$. The HDG semidiscretization consists of the search for $\widetilde{\bs\sigma}_h:[0,\infty)\to \Vh$, $\bff u_h:[0,\infty)\to \Wh$, and $\widehat{\bff u}_h:[0,\infty)\to \Mh$ satisfying
\begin{subequations}\label{eq:7.5}
\begin{alignat}{6}
\label{eq:7.5a}
 (\mathcal A \widetilde{\bs\sigma}_h(t),\bs\xi)_{\Th}
	+(\mathbf u_h(t),\nabla\cdot\bs\xi)_\Th-\langle\widehat{\mathbf u}_h(t),\bs\xi\mathbf n\rangle_{\partial\Th}
	 & = 0 & \\
\label{eq:7.5b}
(\widetilde{\bs\sigma}_h(t),\nabla\mathbf w)_{\Th}
	- \langle \widehat{\bs\sigma}_h(t)\bff n,\mathbf w\rangle_{\partial\Th}
	+ (\rho\,\ddot{\mathbf u}_h(t),\mathbf w)_\Th &=-(\widetilde{\mathbf f}(t),\mathbf w)_\Th
		& & \\
\label{eq:7.5c}
\langle\widehat{\bs\sigma}_h(t)\bff n,\bs\mu\rangle_{\partial\Th\setminus\Gamma_D}
	&=\langle\widetilde{\mathbf g}_N(t),\bs\mu\rangle_{\Gamma_N} & & \\
\label{eq:7.5d}
\langle\widehat{\mathbf u}_h(t),\bs\mu\rangle_{\Gamma_D} &=\langle\mathbf g_D(t),\bs\mu\rangle_{\Gamma_D}
	& &
\end{alignat}
\end{subequations}
for all $(\bs\xi,\bff w,\bs\mu)\in \Vh\times \Wh\times \Mh$ amd $t\ge 0$. The numerical flux $\widehat{\bs\sigma}_h$ can be defined in different ways which influence the choice of initial conditions. If we take the inverse Fourier transform of equations \eqref{eq:7.2} with $\alpha_\kappa=\pm \imath\kappa$, we obtain the following proposals for the numerical flux
\begin{equation}\label{eq:7.6}
\widehat{\bs\sigma}_h(t)\bff n:=\widetilde{\bs\sigma}_h(t)\bff n\pm \bs\tau(\PM\dot{\bff u}_h(t)-\dot{\widehat{\bff u}}_h(t)). 
\end{equation}
Note the positive sign corresponds to the method of Section \ref{sec:3} while the negative sign is the one obtained by time reversal. (It is clear from this why the sign change in the parameter $\alpha_\kappa$ corresponded to time reversal.) For equations \eqref{eq:7.2} with $\alpha_\kappa=1$ we obtain the flux
\begin{equation}\label{eq:7.7}
\widehat{\bs\sigma}_h(t)\bff n:=\widetilde{\bs\sigma}_h(t)\bff n+\bs\tau(\PM\bff u_h(t)-\widehat{\bff u}_h(t)). 
\end{equation}
The following result shows that: the method with flux given by \eqref{eq:7.6} with positive sign accumulates energy over time, the method with flux \eqref{eq:7.6} with negative sign is dissipative, and the method with flux \eqref{eq:7.7} is conservative. The build-up or dissipation of energy happens at the interfaces, while the conservative method needs to add a potential energy term in the interfaces. 

\begin{proposition}
Assume that the problem is unforced ($\widetilde{\mathbf f}=\mathbf 0$ and $\widetilde{\mathbf g}|_N=\mathbf 0$) and the Dirichlet boundary conditions are static $(\dot{\mathbf g}_D=\mathbf 0$). Then the solution to the HDG-semidiscrete equations \eqref{eq:7.5} with flux defined by \eqref{eq:7.6} satisfy
\[
\frac{\mathrm d}{\mathrm dt}
\left(\frac12 \| \widetilde{\bs\sigma}_h(t)\|_{\mathcal A}^2+\frac12 \| \dot{\bff u}_h(t)\|_\rho^2 \right)
=\pm \| \PM \dot{\bff u}_h(t)-\dot{\widehat{\bff u}}_h(t)\|_\tau^2 \qquad \forall t.
\]
The solution to the HDG equations \eqref{eq:7.5} with flux defined by \eqref{eq:7.7} satisfy
\[
\frac{\mathrm d}{\mathrm dt}
\left(\frac12 \| \widetilde{\bs\sigma}_h(t)\|_{\mathcal A}^2+\frac12 \| \dot{\bff u}_h(t)\|_\rho^2 
+\frac12 \|\PM \bff u_h(t)-\widehat{\bff u}_h(t)\|_\tau^2\right)=0 \qquad \forall t.
\]
\end{proposition}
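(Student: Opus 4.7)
The plan is the standard HDG energy argument in the time domain: differentiate the kinematic equation, test the momentum equation against the velocity, and convert the resulting skeletal terms into the $\tau$-norms claimed, using the transmission condition \eqref{eq:7.5c} together with the $L^2$-orthogonality of $\PM$ onto $\Mh$.

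Concretely, I would first take $\tfrac{d}{dt}$ of \eqref{eq:7.5a} and choose $\bs\xi=\widetilde{\bs\sigma}_h(t)\in\Vh$, and then, using $\widetilde{\bff f}=\bff 0$, test \eqref{eq:7.5b} with $\bff w=\dot{\bff u}_h(t)\in\Wh$. The first step produces $\tfrac12\tfrac{d}{dt}\|\widetilde{\bs\sigma}_h\|_{\mathcal A}^2$ plus a volume coupling $(\dot{\bff u}_h,\nabla\cdot\widetilde{\bs\sigma}_h)_{\Th}$ and a skeletal contribution in $\dot{\widehat{\bff u}}_h$; the second produces $\tfrac12\tfrac{d}{dt}\|\dot{\bff u}_h\|_\rho^2$ plus the conjugate volume term $(\widetilde{\bs\sigma}_h,\nabla\dot{\bff u}_h)_{\Th}$ and a skeletal contribution in $\dot{\bff u}_h$. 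Adding the two identities and performing an elementwise integration by parts, the two volume terms combine into the skeletal quantity $\langle\dot{\bff u}_h,\widetilde{\bs\sigma}_h\bff n\rangle_{\partial\Th}$, and after rearrangement we arrive at
\[
\tfrac12\tfrac{d}{dt}\bigl(\|\widetilde{\bs\sigma}_h\|_{\mathcal A}^2+\|\dot{\bff u}_h\|_\rho^2\bigr)
= \langle(\widehat{\bs\sigma}_h-\widetilde{\bs\sigma}_h)\bff n,\,\dot{\bff u}_h-\dot{\widehat{\bff u}}_h\rangle_{\partial\Th}
  +\langle\widehat{\bs\sigma}_h\bff n,\,\dot{\widehat{\bff u}}_h\rangle_{\partial\Th}.
\]

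The second skeletal term on the right is the hinge of the argument, and the step I would be most careful about: because $\dot{\bff g}_D=\bff 0$, the function $\dot{\widehat{\bff u}}_h(t)$ lies in $\Mh$ and vanishes on $\Gamma_D$, so it is an admissible test function in \eqref{eq:7.5c}; together with $\widetilde{\bff g}_N=\bff 0$ this makes the pairing zero. This is exactly the discrete conservativity of the numerical normal stress across interelement faces, and it is what turns a purely local argument into a global energy balance.

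All that is left is to evaluate $\langle(\widehat{\bs\sigma}_h-\widetilde{\bs\sigma}_h)\bff n,\dot{\bff u}_h-\dot{\widehat{\bff u}}_h\rangle_{\partial\Th}$ by inserting each flux definition. In both cases the jump $(\widehat{\bs\sigma}_h-\widetilde{\bs\sigma}_h)\bff n$ sits face-by-face in $\Mh$, so $L^2$-orthogonality of $\PM$ onto $\Mh$ lets us replace $\dot{\bff u}_h$ by $\PM\dot{\bff u}_h$ inside the pairing. Substituting \eqref{eq:7.6} then produces $\pm\langle\bs\tau(\PM\dot{\bff u}_h-\dot{\widehat{\bff u}}_h),\PM\dot{\bff u}_h-\dot{\widehat{\bff u}}_h\rangle_{\partial\Th}=\pm\|\PM\dot{\bff u}_h-\dot{\widehat{\bff u}}_h\|_\tau^2$, which is the first identity. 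Substituting \eqref{eq:7.7} produces $\langle\bs\tau(\PM\bff u_h-\widehat{\bff u}_h),\tfrac{d}{dt}(\PM\bff u_h-\widehat{\bff u}_h)\rangle_{\partial\Th}$, and since $\bs\tau$ is time-independent and symmetric positive definite on each face this is a perfect time derivative $\tfrac12\tfrac{d}{dt}\|\PM\bff u_h-\widehat{\bff u}_h\|_\tau^2$; transferring it to the left-hand side yields the conservation identity. No regularity hypothesis or smallness condition is needed beyond the semidiscrete equations themselves.
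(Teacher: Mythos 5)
Your proof is correct and follows exactly the paper's own (very terse) argument: differentiate \eqref{eq:7.5a} and test with $\widetilde{\bs\sigma}_h$, test \eqref{eq:7.5b} with $\dot{\bff u}_h$, use \eqref{eq:7.5c} against $\dot{\widehat{\bff u}}_h$ (admissible and supported off $\Gamma_D$ because $\dot{\bff g}_D=\bff 0$), add, integrate by parts elementwise, and identify the residual skeleton term via the facewise-constancy of $\bs\tau$ and the $\PM$-orthogonality. The only point to watch is a sign: with \eqref{eq:7.7} taken literally, $(\widehat{\bs\sigma}_h-\widetilde{\bs\sigma}_h)\bff n=+\bs\tau(\PM\bff u_h-\widehat{\bff u}_h)$, so your right-hand side is $+\tfrac12\tfrac{d}{dt}\|\PM\bff u_h-\widehat{\bff u}_h\|_\tau^2$ and moving it left gives the conserved quantity with a \emph{minus} on the interface potential energy; the stated identity (positive interface energy) corresponds to the flux carrying the sign of \eqref{eq:7.2}, i.e.\ $\widetilde{\bs\sigma}_h\bff n-\bs\tau(\PM\bff u_h-\widehat{\bff u}_h)$, an inconsistency that originates in the paper's own definitions rather than in your argument.
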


\begin{proof} It follows from a simple argument: (a) differentiate \eqref{eq:7.5a} and test with $\widetilde{\bs\sigma}_h(t)$, (b) test \eqref{eq:7.5b} with $\dot{\bff u}_h(t)$, (c) test \eqref{eq:7.5c} with $\dot{\widehat{\bff u}}_h(t)$; finally add the result of (a)-(c) using the fact that $\widehat{\bff u}_h(t)$ is constant in time on the Dirichlet faces.
\end{proof}

\newpage
\section{Numerical experiments}

In order to have a good collection of meshes of the domain, we will conduct all the experiments in the unit cube $\Omega=(0,1)^3$. The cube will be partitioned in a sequence of tetrahedrizations obtained as follows: we first divide the cube along the coordinate planes into $n^3$ equally sized cubes (for $n=1,\ldots,7$), and then partition the cubes into six tetrahedra. We will tag the triangulation as $\Th$, where we can consider that $h=1/n$. The partitions corresponding to $n_1$ and $n_2$, where $n_2$ is a multiple of $n_1$, are nested. 

Tests will be run for constant and variable coefficients. In all cases we will consider isotropic materials, with $\mathcal C \bs\varepsilon=2\mu\bs\varepsilon+\lambda(\trace\bs\varepsilon) I_{3\times3}$, for constant or variable Lam\'e parameters $\lambda$ and $\mu$. In the case of constant coefficients we will take $\rho=\lambda=\mu=1$. In the variable case we take
\begin{alignat}{6}\label{eq:coeffsvar}
& \rho(\mathbf x) = 1+|\mathbf x|^2,\\
& \lambda(\mathbf x) = 2 + 0.2x_1^2 + 0.3x_2^2 + 0.04x_3^2,\\
& \mu(\mathbf x)= 3 + 0.5x_2^2 + 0.03x_3^2.
\end{alignat}

The code expands the three dimensional HDG Matlab package of \cite{FuGaSa:2013}. The code is written for the second-order-in-frequency form given in \eqref{eq:7.2}, which allows for easy comparisons with the physical quantities of interest and for almost straightforward changes to have all the methods we have considered in this paper. All the element-by-element operations (needed for building the local solvers and recovering displacement and stress after the hybridized system has been solved) are carried out in parallel. 

\paragraph{A problem with variable coefficients.}\label{sec:7.2}
We imbue the domain with the variable density and material coefficients given by \eqref{eq:coeffsvar}. Dirichlet boundary conditions are imposed on the top and bottom faces of the cube, while Neumann conditions are given on the remaining faces of $\Omega$. Data are built so that
\begin{equation*}
	\mathbf u(\mathbf x) = 
	\begin{bmatrix}
		\cos(\pi x_1) \sin (\pi x_2) \cos (\pi x_3) \\
		5x_1^2x_2x_3+4x_1x_2x_3+3x_1x_2x_3^2 + 17 		\\
		\cos(2x_2)\cos(3x_2)\cos(x_3)
	\end{bmatrix}
\end{equation*}
is the exact solution. Figure~\ref{fig:test1-fo} shows approximation errors for the method that is equivalent to the one applied to first-order-in-frequency
formulation, i.e., we set $\alpha_\kappa := \imath \kappa, \tau = h^{-1}$ in~\eqref{eq:7.2}.
We measure errors for the displacement and the `physical' stress
\begin{equation}\label{eq:errors}
\|\mathbf u-\mathbf u_h\|_\Omega, 
	\qquad
\|\widetilde{\bs\sigma}-\widetilde{\bs\sigma}_h\|_\Omega.
\end{equation}
Error plots in Figure \ref{fig:test1-fo} are shown for the method for different values of the polynomial degree (recall that $\bs\sigma$ is approximated with piecewise $\mathcal P_k$ functions, while $\mathbf u$ is approximated with piecewise $\mathcal P_{k+1}$ functions). The lowest order method ($k=0$), for which we do not have theoretical support shows a quite bad performance in practice and the corresponding results are not shown.  

\begin{figure}
\centering
\subfloat{\includegraphics[width=0.5\textwidth]{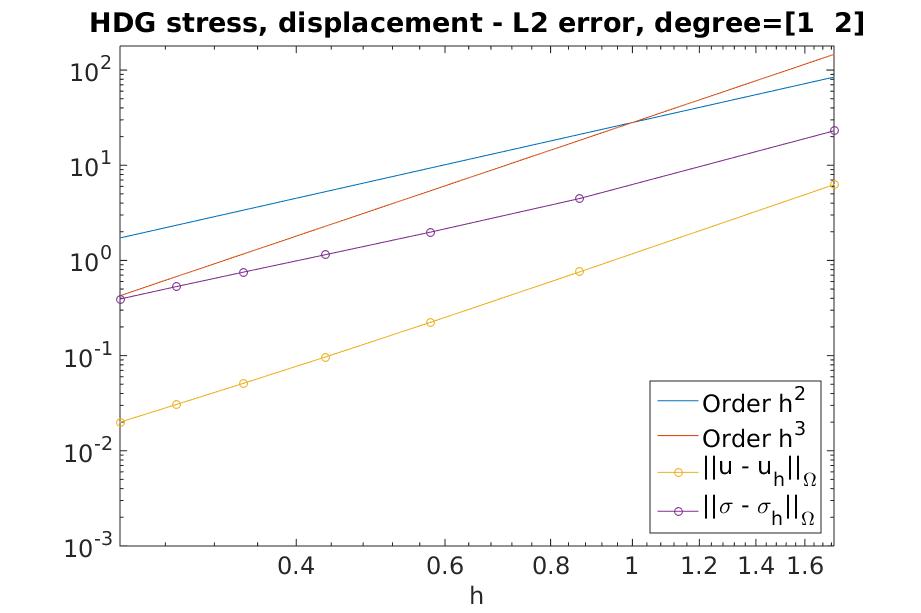}}
\subfloat{\includegraphics[width=0.5\textwidth]{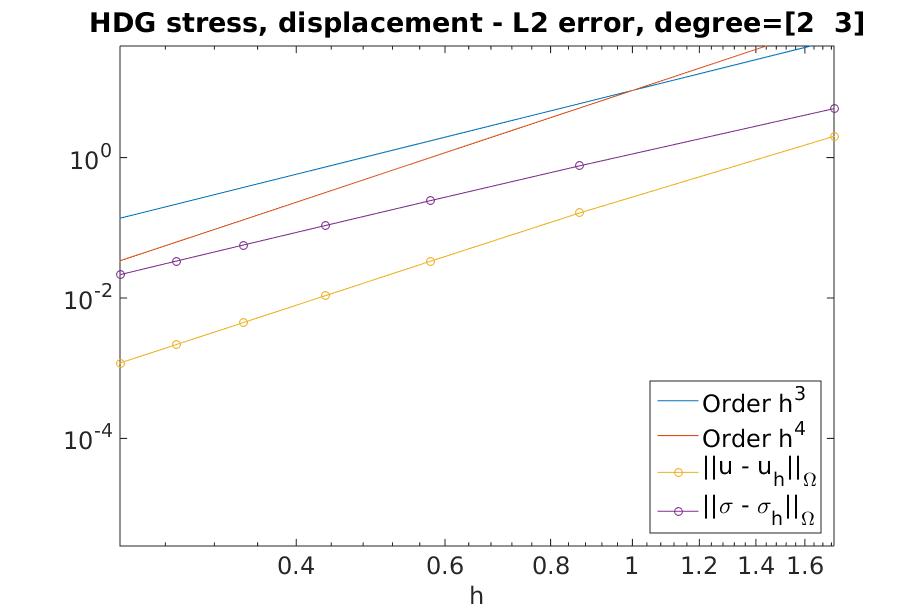}}\\
\subfloat{\includegraphics[width=0.5\textwidth]{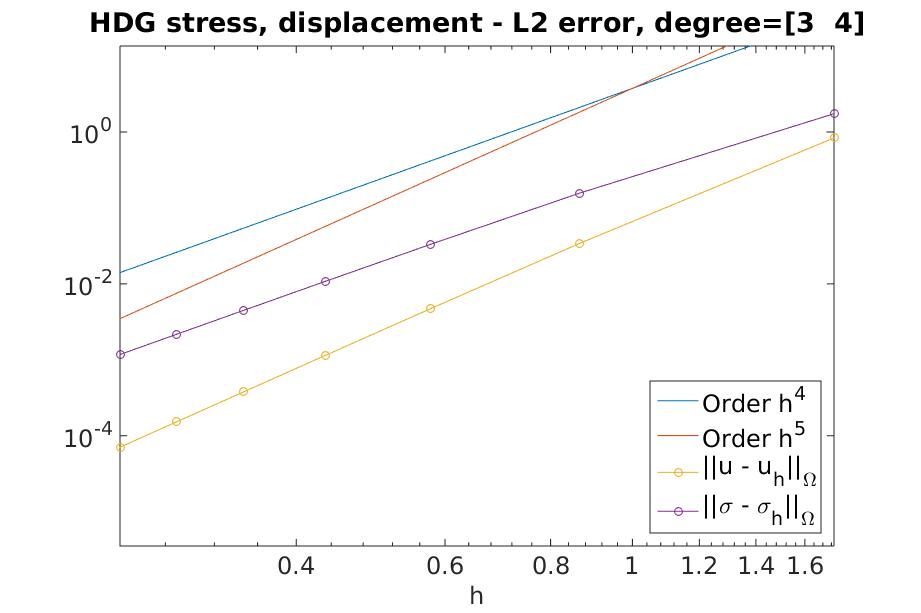}}
\subfloat{\includegraphics[width=0.5\textwidth]{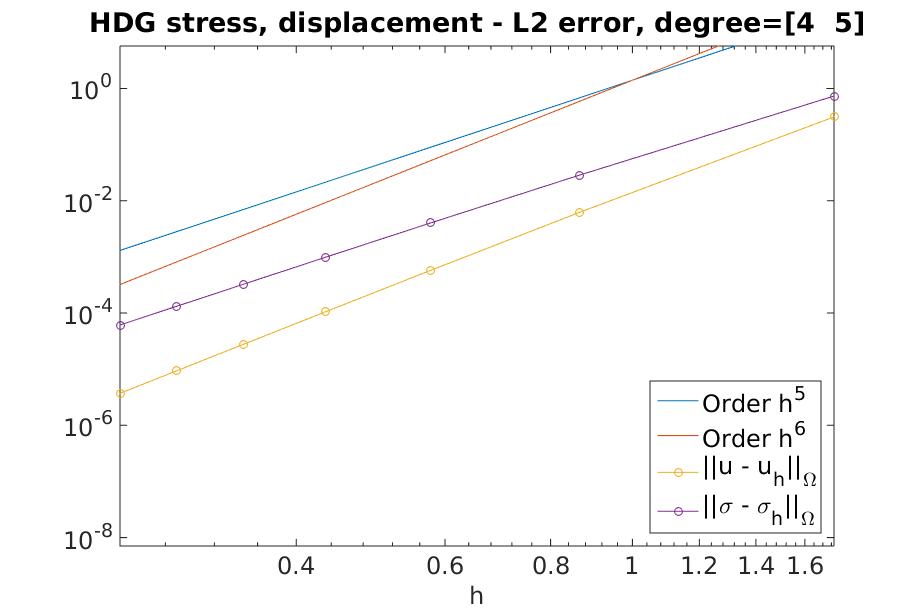}}\\
\subfloat{\includegraphics[width=0.5\textwidth]{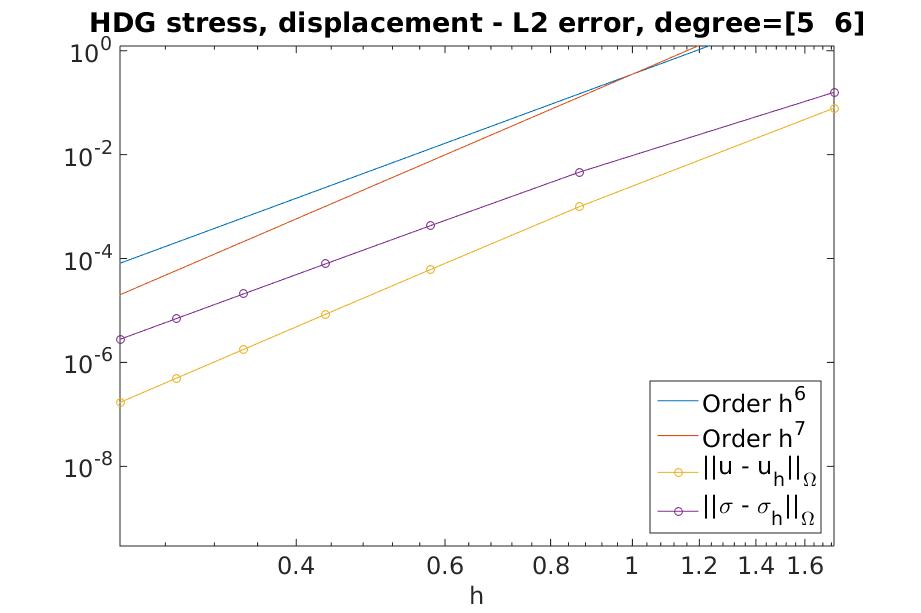}}
\subfloat{\includegraphics[width=0.5\textwidth]{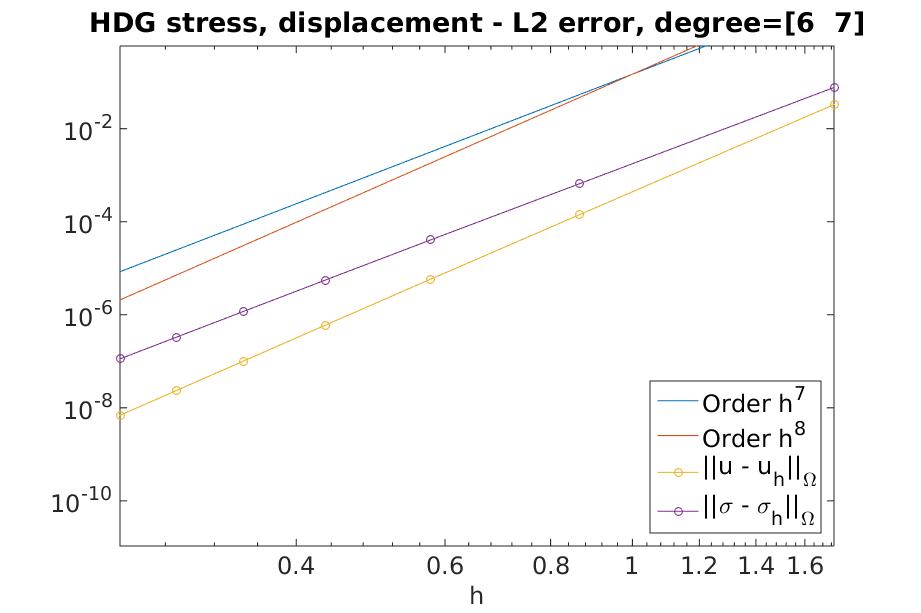}}
\caption{Errors of the first-order-in-frequency formulation ($\alpha_\kappa = i\kappa, \tau = h^{-1}$ in~\eqref{eq:7.2}) for the non-homogeneous test case.}
\label{fig:test1-fo}
\end{figure}

We next explore the method based on the pure second-order in frequency formulation ($\alpha_\kappa = 1, \tau = h^{-1}$ in~\eqref{eq:7.2}). Approximation errors for several values of the polynomial degree $k$ are shown in Figure~\ref{fig:test1-so}.

\begin{figure}
\centering
\subfloat{\includegraphics[width=0.5\textwidth]{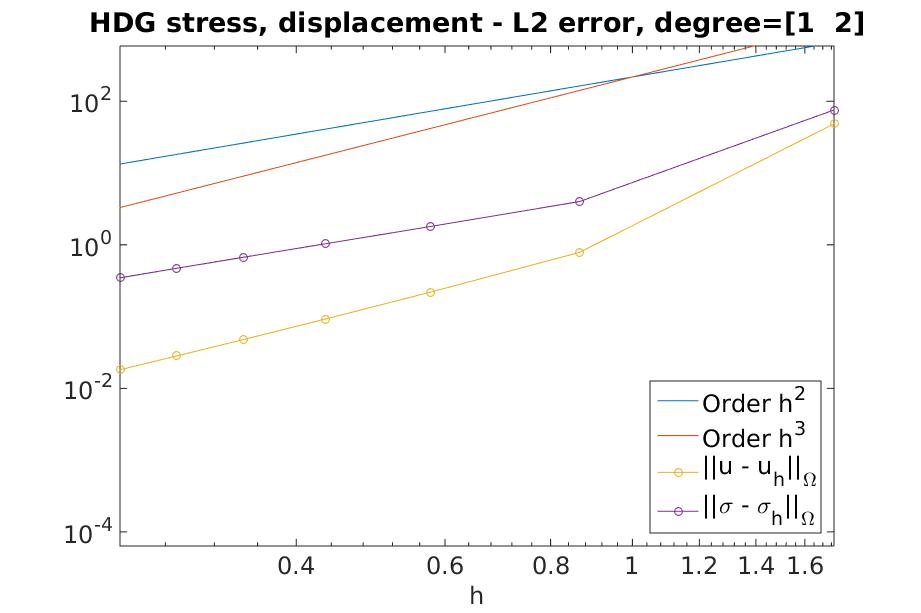}}
\subfloat{\includegraphics[width=0.5\textwidth]{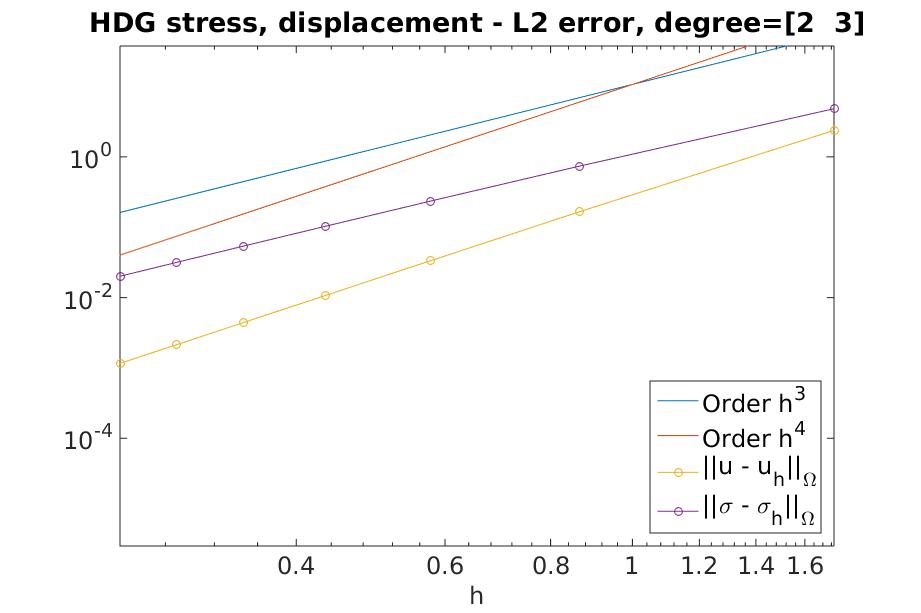}}\\
\subfloat{\includegraphics[width=0.5\textwidth]{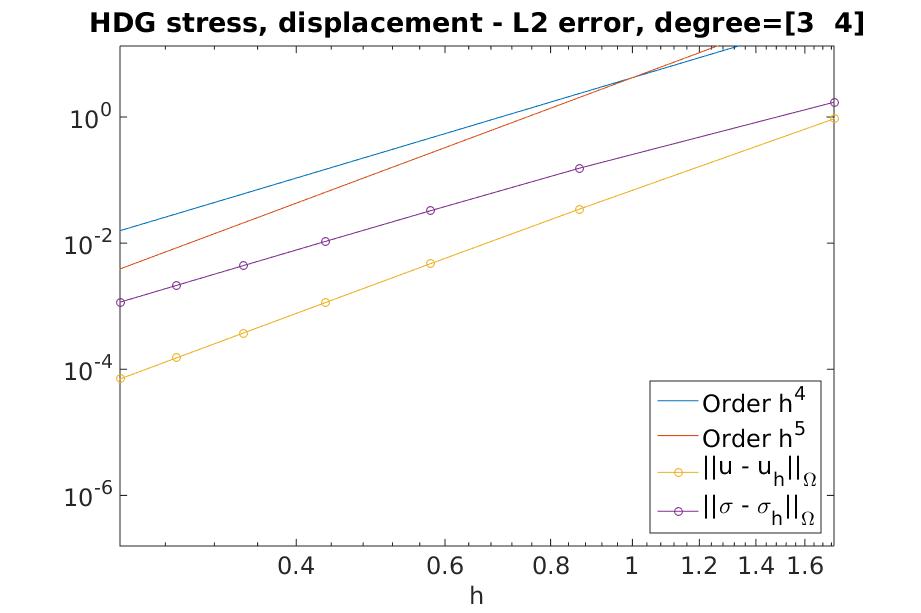}}
\subfloat{\includegraphics[width=0.5\textwidth]{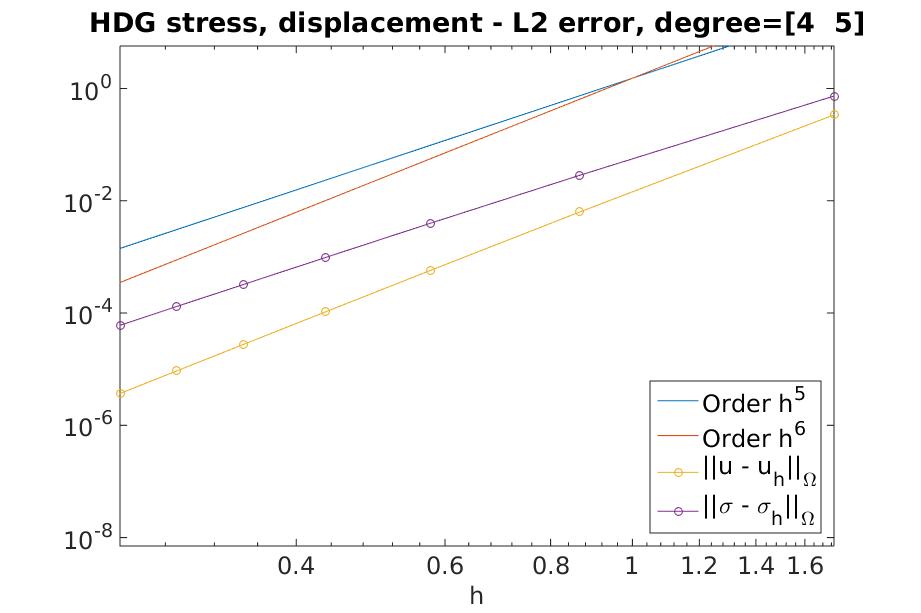}}\\
\subfloat{\includegraphics[width=0.5\textwidth]{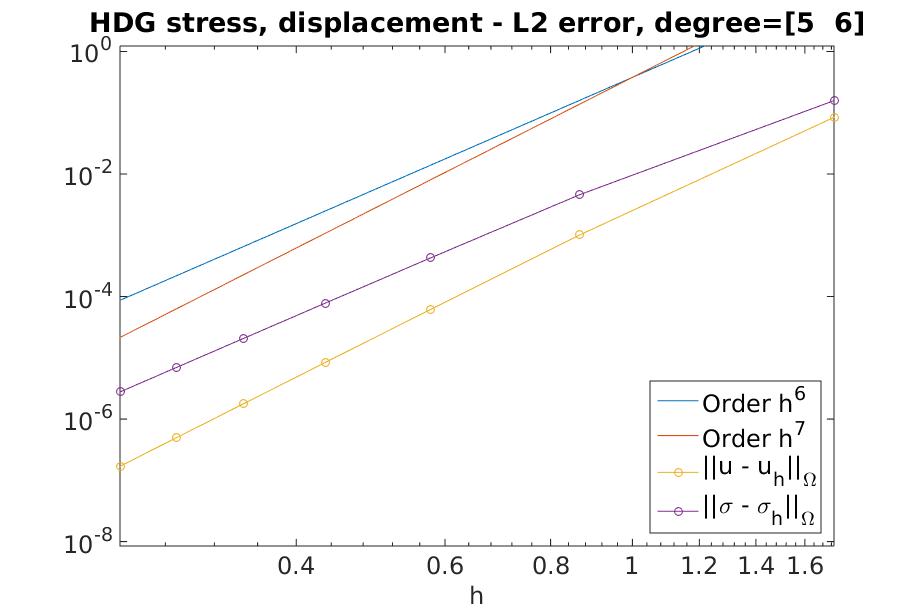}}
\subfloat{\includegraphics[width=0.5\textwidth]{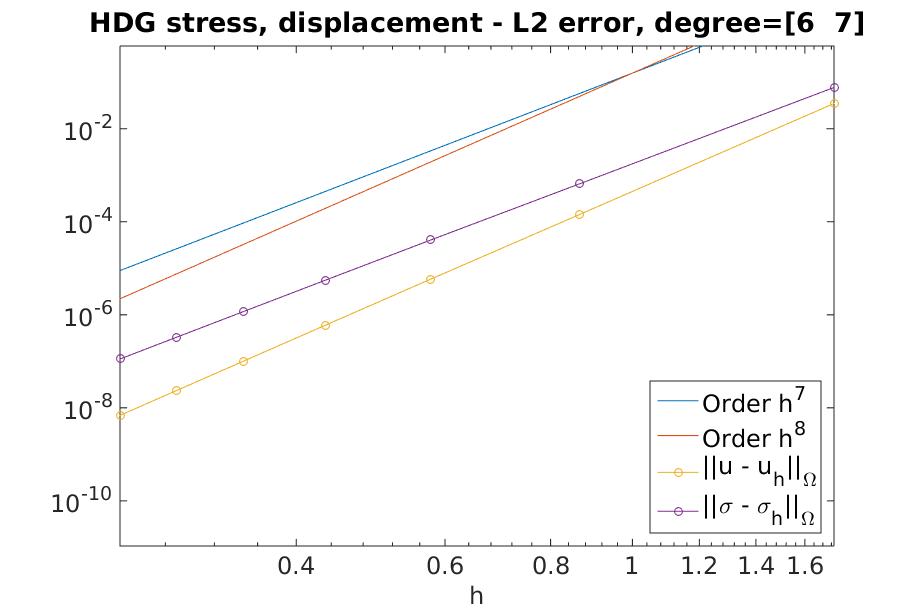}}
\caption{Errors of the second-order-in-frequency formulation ($\alpha_\kappa = 1, \tau = h^{-1}$ in~\eqref{eq:7.2}) for the non-homogeneous test case.}
\label{fig:test1-so}
\end{figure}

\paragraph{Plane wave solutions.}  \label{pwave}
We now test a problem with constant coefficients and plane-wave solutions as exact data, with Dirichlet conditions imposed on the entire boundary. Plane waves for the time-harmonic elastic wave equation are functions of the form:
\[
\mathbf u(\mathbf x) = a\, \mathbf e \exp \left( (-\imath\kappa/c) \mathbf x \cdot \mathbf d \right),
\]
where $\mathbf d$ (direction of propagation) and $\mathbf e$ (direction of elastic displacement) are unit vectors, $a$ is a constant and: (a)
either $\mathbf d = \mathbf e$ and the wavespeed is $c = \sqrt{(\lambda + 2 \mu)/\rho}$ (pressure wave), or (b)
$\mathbf d \cdot \mathbf e = 0$ and $c = \sqrt{\mu/\rho}$
(shear wave).

We use amplitude $a=0.3$ and frequency $\kappa = 1$ and tetrahedrizations of the cube each with $6n^3$ elements.  Convergence results for pressure and shear waves with the first-order-in-frequency formulation ($\alpha_\kappa = i\kappa, \tau = h^{-1}$ in~\eqref{eq:7.2}) are shown in Figure~\ref{fig:pressurewave-1}. Both show the same pattern of optimal order $\mathcal O(h^{k+2})$ for the displacement variable and $\mathcal O(h^{k+1})$ for the stress variable.

\begin{figure}
\centering
\subfloat{\includegraphics[width=0.5\textwidth]{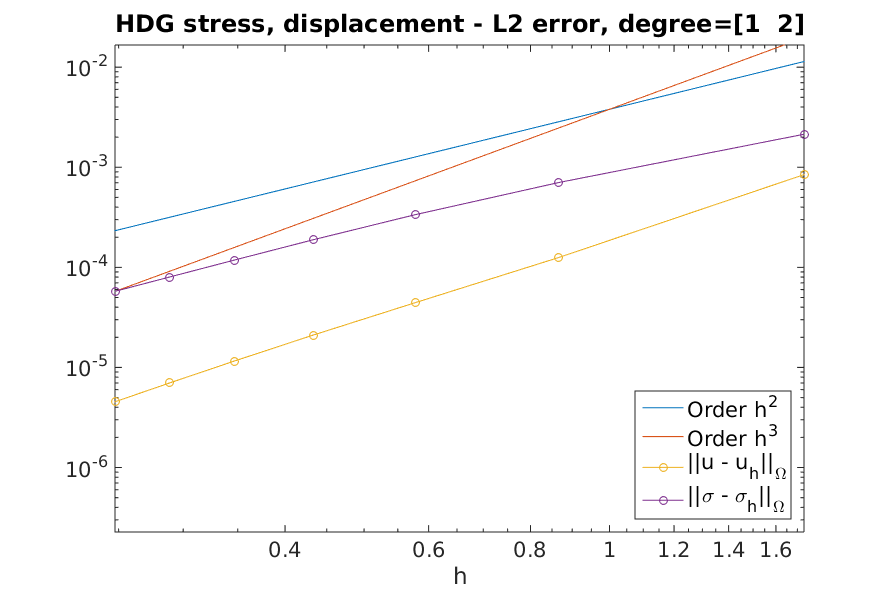}}
\subfloat{\includegraphics[width=0.5\textwidth]{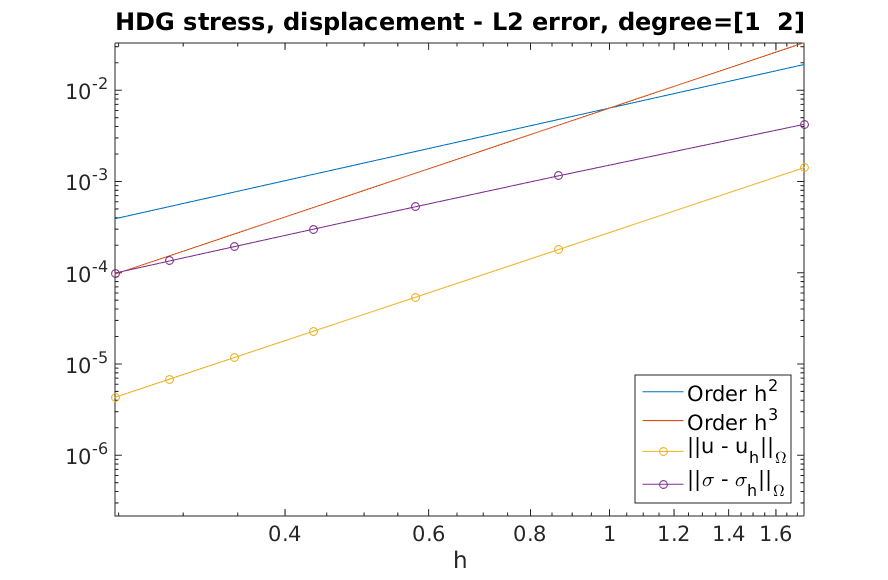}}\\
\subfloat{\includegraphics[width=0.5\textwidth]{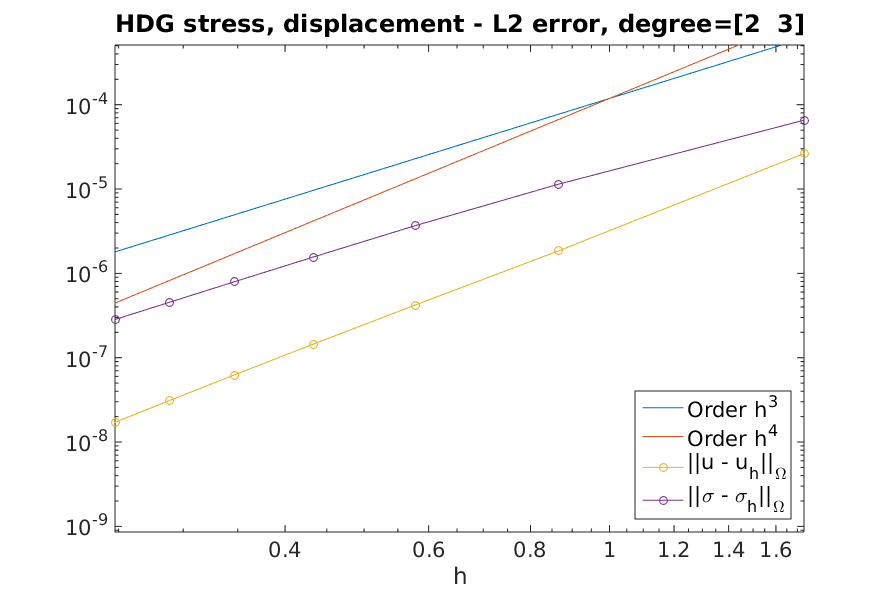}}
\subfloat{\includegraphics[width=0.5\textwidth]{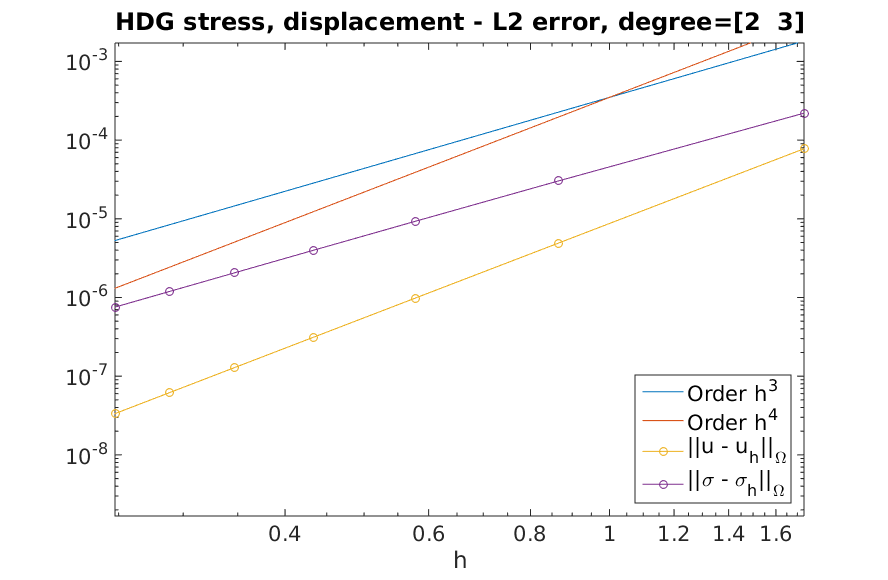}}\\
\subfloat{\includegraphics[width=0.5\textwidth]{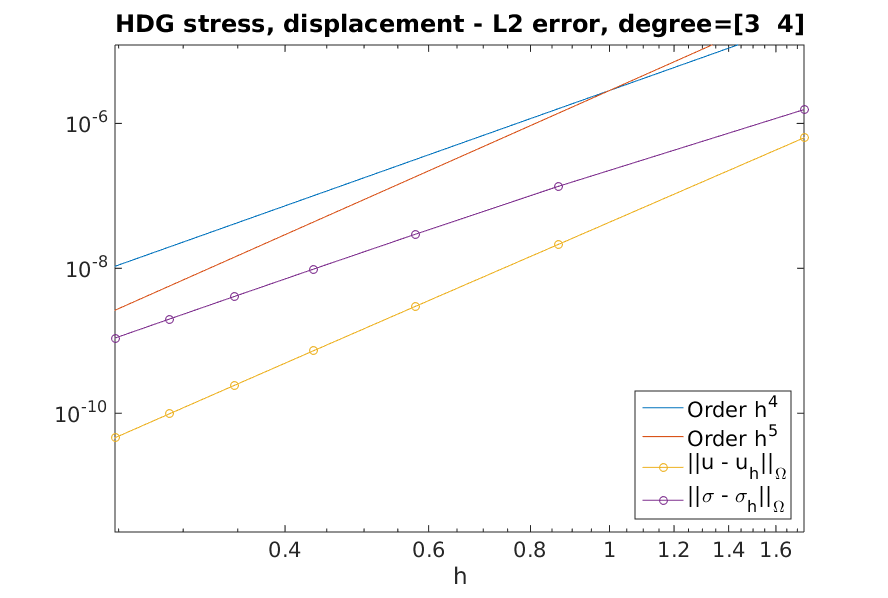}}
\subfloat{\includegraphics[width=0.5\textwidth]{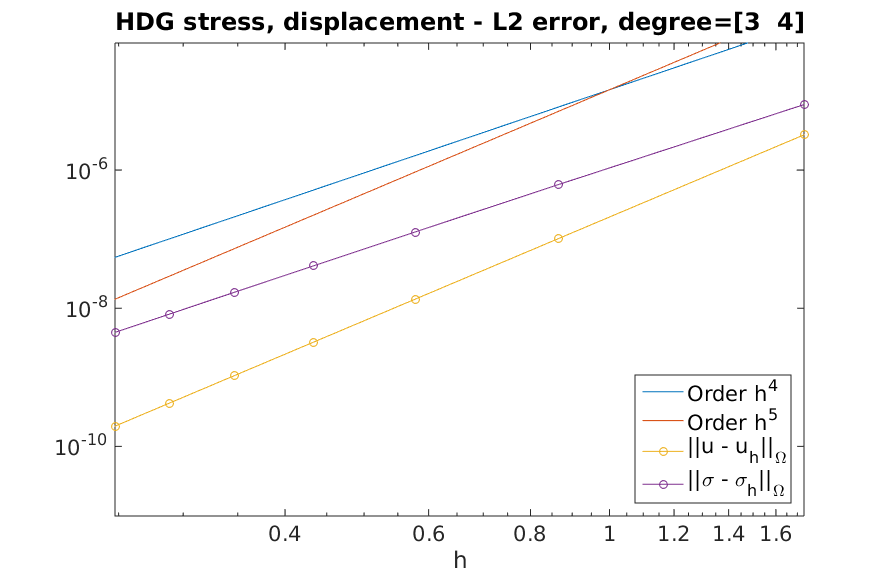}}
% INSER K = 4 HERE
\caption{Convergence results for pressure (left column) and shear (right column) waves with the first-order-in-frequency formulation ($\alpha_\kappa = i\kappa, \tau = h^{-1}$ in~\eqref{eq:7.2}) increasing the polynomial degree $k$ from $1$ to $3$}
\label{fig:pressurewave-1}
\end{figure}

\paragraph{Increasing the frequency $\kappa$.}\label{sec:7.4}
In a final test, we look for the behavior of the method as the frequency increases $h\to 0$ while $h \kappa$ remains constant. We will take $\kappa$-dependent plane waves as exact solutions. In order to play with different values of $\kappa$ without hitting elastic modes, we impose an impedance boundary condition
\[
\widetilde{\bs\sigma}\,\mathbf n -\imath \kappa \mathbf u =\widetilde{\mathbf g}_R \qquad \mbox{on $\Gamma$}.
\]
Note that we have not carried out the analysis for these boundary conditions, but that convergence estimates can be obtained with uncomplicated modifications of what we have done in the previous sections. The HDG equation corresponding to the boundary conditions and interelement stress balance become
\[
\langle\widehat{\bs\sigma}_h\mathbf n - \bs\tau(\PM\mathbf u_h - \widehat{\mathbf u}_h),\bs\mu\rangle_{\partial\Th}-\imath\kappa\langle \widehat{\mathbf u}_h, \bs\mu \rangle_\Gamma
= \langle \widetilde{\mathbf g}_R, \bs\mu \rangle_\Gamma \qquad \forall \bs\mu \in \Mh.
\]
This is just a slight perturbation of the pure Neumann problem, so the linear system (with coefficient representations as in Section \ref{sec:6}) may be represented by
\[
\left[\begin{array}{ccc}
	\mathrm A & \mathrm D^\top & -\mathrm N^\top \\
	-\mathrm D & -\kappa^2\mathrm M-\imath\kappa\mathrm T_{11} & \imath\kappa\mathrm T_{12} \\
	\mathrm N & -\imath\kappa\mathrm T_{12}^\top & \imath\kappa\mathrm T_{22} -\imath\kappa \mathrm C
\end{array}\right]
\left[\begin{array}{l} 
	\widetilde{\underline\sigma} \\ \underline u\\ \underline{\widehat u}
\end{array}\right]
=
\left[\begin{array}{c}
	\underline 0 \\ \widetilde{\underline f} \\ \widetilde{\underline g}_R
\end{array}\right],
\]
where $\mathrm C$ is a boundary mass matrix related to the space $\Mh$ restricted to $\Gamma$. We are going to plot relative errors
\[
\|\mathbf u-\mathbf u_h\|_\Omega/\|\mathbf u\|_\Omega,
\qquad
\|\widetilde{\bs\sigma}-
\widetilde{\bs\sigma}_h\|_\Omega/\|\widetilde{\bs\sigma}\|_\Omega,
\]
for a given exact plane pressure wave solution. We will fix the value $h\kappa$ and use finer grids on the cube, thus automatically  increasing the frequency. Note that for plane waves $\| \mathbf u\|_\Omega=\mathcal O(1)$ but $\|\widetilde{\bs\sigma}\|_\Omega=\mathcal O(\kappa)=\mathcal O(h^{-1})$. We are expecting errors to stay bounded as $h\to 0$ (notice that we do not have a theory supporting this and we do not go very far with the frequency). Some experiments are reported in Figure \ref{fig:hk-constant-a}, using the lowest order method ($\mathcal P_1$ approximation for the stress, $\mathcal P_2$ for the displacement) with different values of $h\kappa$. That the error for the displacement seems to be declining as $h\to 0$, which shows the numerical method doing somewhat better than expected for the displacement. 

\begin{figure}
\centering
\subfloat[$h\kappa = \sqrt{3}/100$]{\includegraphics[width=\textwidth]{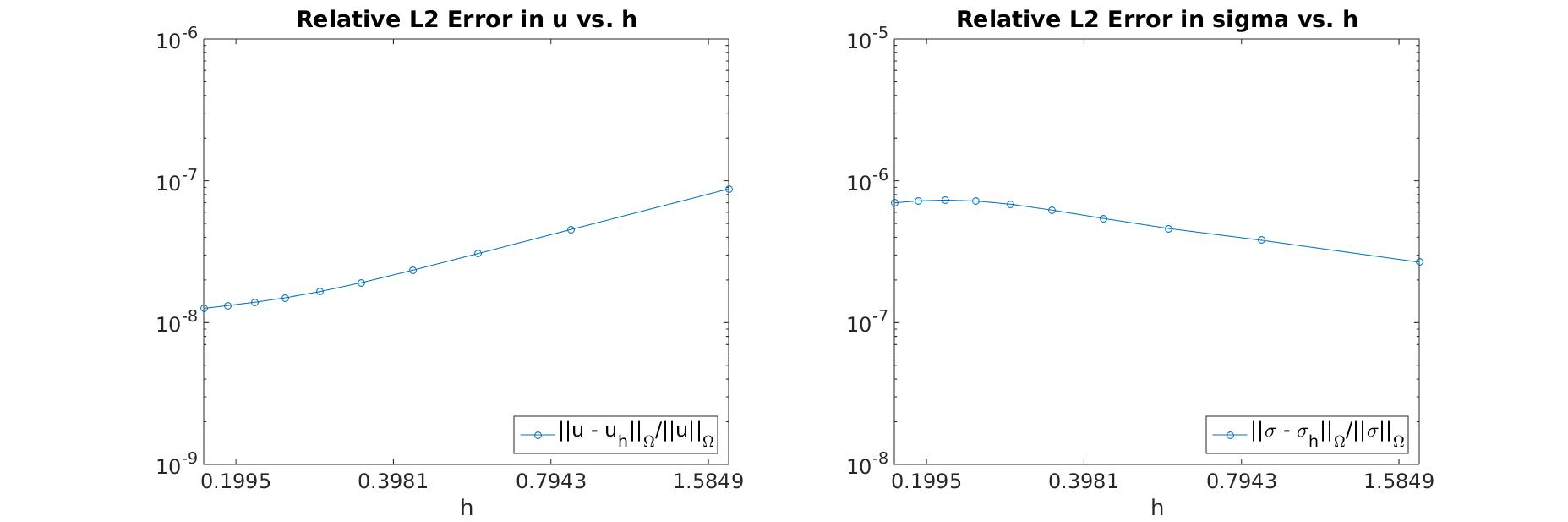}}\\
\subfloat[$h\kappa = \sqrt{3}/10$]{\includegraphics[width=\textwidth]{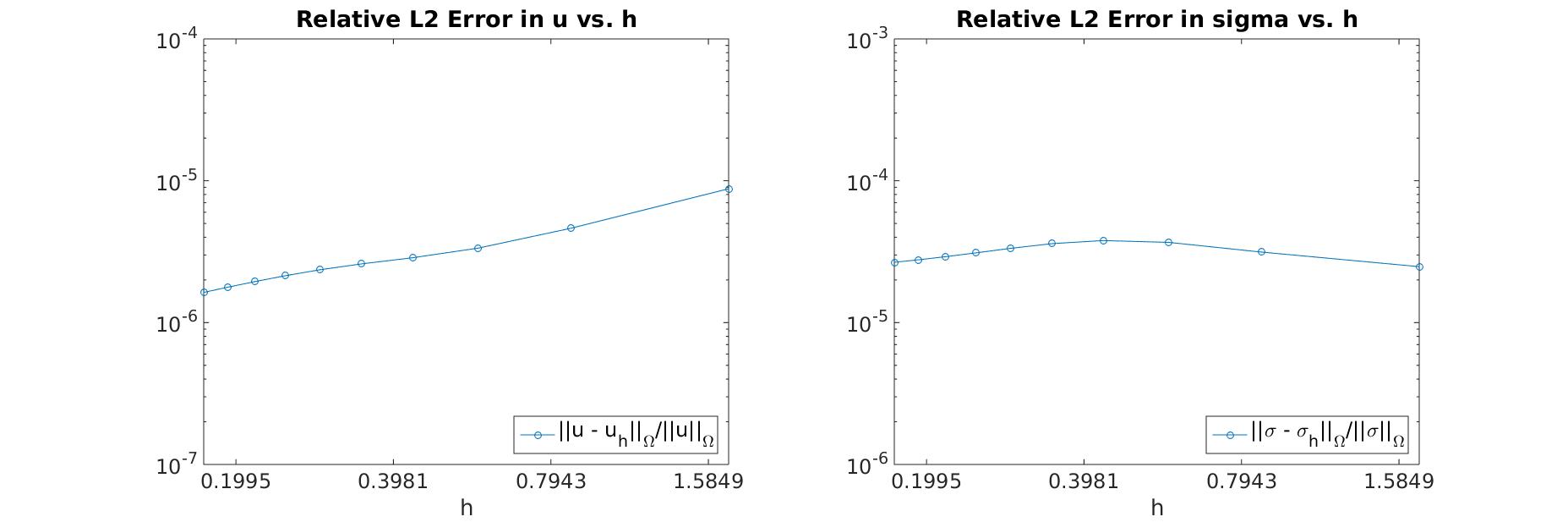}}\\
\subfloat[$h\kappa = \sqrt{3}$]{\includegraphics[width=\textwidth]{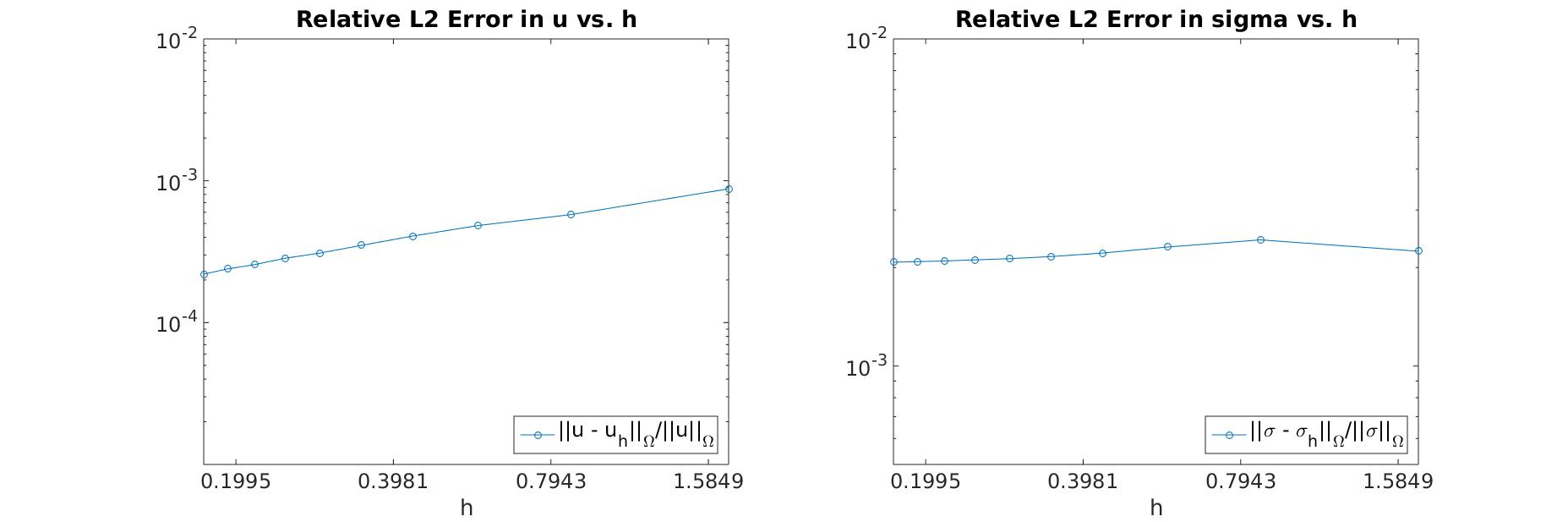}}\\
\caption{Convergence results for pressure waves with the first-order-in-frequency formulation ($\alpha_\kappa = i\kappa, \tau = h^{-1}$ in~\eqref{eq:7.2}), using
$\mathbf u_h|_K\in P_2(K;\mathbb C^3), \sigma_h|_K\in P_1(K;\mathbb C_\text{sym}^{3\times3})$ and keeping $h\kappa$ constant.}
\label{fig:hk-constant-a}
\end{figure}

\paragraph{Some conclusions.} We have introduced and analyzed a family of Hybridizable Discontinuous Galerkin methods for the equations of time-harmonic linear elastodynamics in three dimensions. Convergence orders are shown to be optimal in both variables (displacement and stress), even if different polynomial degrees are used for their approximation, which means that the lower approximation order of the stress does not pollute the approximation for the displacement. We have transferred the methods to an HDG semidiscretization in space of the transient model equations and shown that one of the choices leads to a conservative method. The experiments show that the methods behave well for fixed moderate frequences and that for increasing  frequencies the scheme has good asymptotic properties.

\appendix 
\section{Some additional proofs}\label{sec:A}

\begin{proof}[Proof of \eqref{eq:6.2}]
The dual adjoint problem \eqref{eq:6.1} is equivalent to
\begin{subequations}\label{eq:Y.1}
\begin{alignat}{6}
\label{eq:Y.1a}
\nabla\cdot\mathcal C\bs\varepsilon(\bs\phi) + \kappa^2\rho\,\bs\phi
	 &= \imath\kappa \eu &\qquad &\mbox{in $\Omega$},\\
\bs\phi &=\bs 0 & & \mbox{on $\Gamma_D$},\\
\mathcal C\bs\varepsilon(\bs\phi)\,\mathbf n &=\bs 0 & & \mbox{on $\Gamma_N$},
\end{alignat}
\end{subequations}
and therefore $\|\bs\phi\|_{1,\Omega} \le \kappa C_\kappa \| \eu\|_\Omega$. From a variational formulation of \eqref{eq:Y.1} we can prove the identity
\[
\| \kappa \bs\phi\|_\rho^2=\imath(\kappa \eu, \overline{\bs\phi})_\Omega+
(\mathcal C\bs\varepsilon(\bs\phi),\bs\varepsilon(\overline{\bs\phi}))_\Omega, 
\]
and therefore, using Young's inequality and hiding constants only related to the equation's coefficients, we can prove that
\[
\|\kappa \bs\phi\|_\Omega \lesssim (1+C_\kappa\kappa) \| \eu\|_\Omega.
\]
We now rewrite \eqref{eq:Y.1a} as
\[
\nabla\cdot\mathcal C\bs\varepsilon(\bs\phi)-\rho\bs\phi=\imath\kappa\eu-(\kappa^2+1)\rho\bs\phi
\]
and use the regularity estimate \eqref{eq:3.2} to bound $\|\bs\phi\|_{2,\Omega}$.
\end{proof}

\begin{proof}[Sketch of the proof of Theorem \ref{the:7.2}]
The first order in space, second order in frequency system is
\begin{subequations}
\begin{alignat*}{6}
\mathcal A\widetilde{\bs\sigma} -\bs\varepsilon(\bff u) &=\mathbf 0
	&\qquad &\mbox{in $\Omega$},\\
\nabla\cdot\widetilde{\bs\sigma} +\kappa^2\,\rho\,\bff u &=\widetilde{\bff f}
	&\qquad &\mbox{in $\Omega$},\\
\bff u &=\mathbf g_D
	&\qquad &\mbox{on $\Gamma_D$},\\
\widetilde{\bs\sigma}\mathbf n &=\widetilde{\mathbf g}_N
	&\qquad &\mbox{on $\Gamma_N$}.
\end{alignat*}
\end{subequations}
From this moment on, we will drop all tildes in the formulas. It has to be understood though that the stress that we are computing with this method is the physical stress and not the one scaled by $\imath/\kappa$. The error equations are
\begin{subequations}\label{eq:X.3}
\begin{alignat}{6}
 (\mathcal A \esig,\bs\xi)_\Th+(\eu,\nabla\cdot\bs\xi)_\Th-\langle \euhat,\bs\xi\mathbf n\rangle_{\partial\Th}
	&= (\mathcal A \epsig,\bs\xi)_\Th,\\
\nonumber
-(\nabla\cdot\esig,\mathbf w)_\Th-\kappa^2 (\rho\eu,\mathbf w)_\Th\\
	+\langle\bs\tau(\PM \eu-\euhat),\PM\mathbf w\rangle_{\partial\Th} 
	&=-\kappa^2(\rho\epu,\mathbf w)_\Th\\
\nonumber
	&\phantom{=}-\langle \epsig\mathbf n,\mathbf w\rangle_{\partial\Th}
		+\langle\bs\tau\epu,\PM\mathbf w\rangle_{\partial\Th},\\
\langle \esig\mathbf n-\bs\tau(\PM\eu-\euhat),\bs\mu\rangle_{\partial\Th\setminus\Gamma_D}
	&=\langle\epsig\mathbf n,\bs\mu\rangle_{\partial\Th\setminus\Gamma_D}
		-\langle\bs\tau \PM\epu,\bs\mu\rangle_{\partial\Th\setminus\Gamma_D}\\
\langle\euhat,\bs\mu\rangle_{\Gamma_D} &=0,
\end{alignat}
\end{subequations}
and a simple argument shows the new energy identity
\begin{alignat}{6}
\label{eq:X.0}
& \|\esig\|_{\mathcal A}^2-\kappa^2\|\eu\|_\rho^2+\|\PM \eu-\euhat\|_\tau^2 \\
& \hspace{5pt} = (\mathcal A\epsig,\overline\esig)_\Th-\kappa^2(\rho \overline\epu,\eu)_\Th
	-\langle\overline\epsig\mathbf n,\eu-\euhat\rangle_{\partial\Th}
	+\langle\bs\tau \, \overline\epu, \PM \eu-\euhat\rangle_{\partial\Th}.
	\nonumber
\end{alignat}
The adjoint problem has to be written as 
\begin{subequations}\label{eq:X.1}
\begin{alignat}{6}
\mathcal A\bs\psi +\bs\varepsilon(\bs\phi) &=\mathbf 0
	&\qquad &\mbox{in $\Omega$},\\
-\nabla\cdot\bs\psi +\kappa^2\,\rho\,\bs\phi &= -\eu
	&\qquad &\mbox{in $\Omega$},\\
\bs\phi &=\mathbf 0
	&\qquad &\mbox{on $\Gamma_D$},\\
\bs\psi\mathbf n &=\mathbf 0
	&\qquad &\mbox{on $\Gamma_N$},
\end{alignat}
\end{subequations}
where the negative sign in the right hand side is added for convenience. With the usual regularity hypotheses, the scaled regularity inequalities for the solution of this problem are:
\[
\kappa(\| \bs\phi\|_{1,\Omega} + \| \rho\bs\phi\|_{1,\Th})+\| \mathcal A \bs\psi\|_{1,\Omega}+ \|\bs\psi\|_{1,\Th}
+ \|\bs\phi\|_{2,\Omega} \le E_\kappa\|\eu\|_\Omega,
\]
with $E_\kappa$ bounded as in the statement of the theorem. The proof of this inequality is very similar to the proof of \eqref{eq:6.2} above.
After integration by parts and introduction of projections it can be shown that the solution of \eqref{eq:X.1} satisfies the following identities
\begin{subequations}\label{eq:X.4}
\begin{alignat}{6}
(\mathcal A\esig,\overline{\bs\psi})_\Th
	-(\nabla\cdot\esig,\PW\overline{\bs\phi})_\Th+\langle\esig\mathbf n,\overline{\bs\phi}\rangle_{\partial\Th} &=0,\\
(\eu,\nabla\cdot\PV\overline{\bs\psi})_\Th+\langle \eu,(\PV\overline{\bs\psi}-\overline{\bs\psi})\mathbf n\rangle_{\partial\Th}
	-\kappa^2 (\rho\,\eu,\overline{\bs\phi})_\Th & = \| \eu\|_\Omega^2,\\
\langle \euhat,\overline{\bs\psi}\mathbf n\rangle_{\partial\Th} &=0.
\end{alignat}
\end{subequations}
Testing the error equations \eqref{eq:X.3} with the conjugates of the projections of the adjoint problem and rearranging terms, we prove
\begin{subequations}\label{eq:X.5}
\begin{alignat}{6}
 (\mathcal A \esig,\overline{\bs\psi})_\Th+(\eu,\nabla\cdot \PV\overline{\bs\psi})_\Th
	-\langle \euhat,\overline{\bs\psi}\mathbf n\rangle_{\partial\Th}
	&=\ell_1(\bs\psi),\\
-(\nabla\cdot\esig,\PW\overline{\bs\phi})_\Th-\kappa^2 (\rho\eu,\overline{\bs\phi})_\Th
	&=\ell_2(\bs\phi),\\
\langle \esig\mathbf n,\overline{\bs\phi}\rangle_{\partial\Th}
	&=\ell_3(\bs\phi),
\end{alignat}
\end{subequations}
where
\begin{eqnarray*}
\ell_1(\bs\psi)&:=&
	(\mathcal A\esig,\overline{\bs\psi}-\PV\overline{\bs\psi})_\Th
	+(\mathcal A \epsig,\PV\overline{\bs\psi})_\Th
	-\langle\euhat,(\overline{\bs\psi}-\PV\overline{\bs\psi})\mathbf n\rangle_{\partial\Th},\\
\ell_2(\bs\phi) &:=& -\kappa^2\left(
	(\rho\eu,\overline{\bs\phi}-\PW\overline{\bs\phi})_\Th
	+(\rho\epu,\PW\overline{\bs\phi})_\Th\right)\\
	 & & -\langle \epsig\mathbf n,\PW\overline{\bs\phi}\rangle_{\partial\Th}
		+\langle\bs\tau\PM\epu,\PW\overline{\bs\phi}\rangle_{\partial\Th}
		 -\langle\bs\tau(\PM \eu-\euhat),\PW\overline{\bs\phi}\rangle_{\partial\Th} \\
\ell_3(\bs\phi) &:=& \langle\epsig\mathbf n,\PM\overline{\bs\phi}\rangle_{\partial\Th}
		-\langle\bs\tau \PM\epu,\overline{\bs\phi}\rangle_{\partial\Th}
		+\langle \bs\tau(\PM\eu-\euhat),\overline{\bs\phi}\rangle_{\partial\Th}.
\end{eqnarray*}
The sum of equations \eqref{eq:X.4} can then be compared with the sum of equations \eqref{eq:X.5} to prove the duality identity:
\begin{eqnarray}\label{eq:X.6}
\| \eu\|_\Omega^2
	&=& ( (\mathcal A\esig,\overline{\bs\psi}-\PV\overline{\bs\psi})_\Th+(\mathcal A\epsig,\PV\overline{\bs\psi})_\Th\\
	\nonumber
	& & -\kappa^2 \left((\rho\,\eu,\overline{\bs\phi}-\PW\overline{\bs\phi})_\Th+(\rho\,\epu,\PW\overline{\bs\phi})_\Th\right)\\
	\nonumber
	& & +\langle \eu-\euhat,(\overline{\bs\psi}-\PV\overline{\bs\psi})\mathbf n\rangle_{\partial\Th}\\
	\nonumber
	& & +\langle\bs\tau(\PM \eu-\euhat)-\bs\tau\PM\epu,\overline{\bs\phi}-\PW\overline{\bs\phi}\rangle_{\partial\Th}
	       - \langle \epsig\mathbf n,\PW\overline{\bs\phi}-\PM\overline{\bs\phi}\rangle_{\partial\Th}.
\end{eqnarray}
What is left now is the proof of bounds for the right-hand sides of \eqref{eq:X.0} and \eqref{eq:X.6}. This process requires just going carefully over the proofs of Proposition \eqref{prop:5.4} and \eqref{prop:6.2}. Nothing essential is changed. We can write the results with our shorthand notation for errors $\Sigma:=\| \esig\|_{\mathcal A}$, $\mathrm T:=\| \PM \eu-\euhat\|_\tau$, $\mathrm U:=\| \eu\|_\Omega$, and approximation terms $\Sigma_h:= h^t |\bs\sigma|_{t,\Omega}$, $\mathrm U_h:=h^{s-1} |\mathbf u|_{s,\Omega}$.
The bounds we obtain are:
\begin{align*}
\Sigma^2+ \mathrm T^2
	& \lesssim \Sigma\,\Sigma_h + \kappa^2 \mathrm U\,\mathrm U_h 
	+ (\Sigma_h+\mathrm U_h) \mathrm T +\Sigma_h^2+\kappa^2\mathrm U^2, \\
\mathrm U & \lesssim \alpha^2 (\Sigma+\Sigma_h+\mathrm U_h +\mathrm T),
\end{align*}
where $\alpha:=E_\kappa h (1+\kappa)$.
The condition that allows us to bootstrap is $C(\alpha\kappa)^2 \le 1/4$, where $C$ is a constant related to the constants hidden in the symbols $\lesssim$ above. After simplification, we prove
\[
\Sigma+\mathrm T  \lesssim \Sigma_h+(1+\kappa) \mathrm U_h \qquad 
\mathrm U  \lesssim \alpha (\Sigma_h+(1+\kappa)\mathrm U_h),
\]
which is the statement of the theorem.
\end{proof}

%
% =============================
\bibliography{HDGpaperBibliography}
%\bibliographystyle{abbrv}
% =============================
%

\end{document}